\theoremstyle{plain}
\newtheorem{theorem}{Theorem}[section]
\newtheorem{lemma}[theorem]{Lemma}
\newtheorem{corollary}[theorem]{Corollary}
\newtheorem{proposition}[theorem]{Proposition}
\newtheorem{observation}[theorem]{Observation}
\newtheorem{remark}[theorem]{Remark}
\newtheorem{question}[theorem]{Question}
\theoremstyle{definition}
\newcommand{\diam}{\textnormal{diam}}
\def\finf{\mathop{{\rm I}\kern -.27 em {\rm F}}\nolimits}
\begin{document}


\title{The fractional $k$-truncated metric dimension of graphs}

\author{{\bf{Eunjeong Yi}}\\
\small Texas A\&M University at Galveston, Galveston, TX 77553, USA\\
{\small\em yie@tamug.edu}}

\maketitle

\date{}

\begin{abstract}
The metric dimension, $\dim(G)$, and the fractional metric dimension, $\dim_f(G)$, of a graph $G$ have been studied extensively. 
Let $G$ be a graph with vertex set $V(G)$, and let $d(x,y)$ denote the length of a shortest $x-y$ path in $G$. Let $k$ be a positive integer. For any $x,y \in V(G)$, let $d_k(x,y)=\min\{d(x,y), k+1\}$ and let $R_k\{x,y\}=\{z\in V(G): d_k(x,z) \neq d_k(y,z)\}$. A set $S \subseteq V(G)$ is a \emph{$k$-truncated resolving set} of $G$ if $|S \cap R_k\{x,y\}| \ge 1$ for any distinct $x,y\in V(G)$, and the \emph{$k$-truncated metric dimension} $\dim_k(G)$ of $G$ is the minimum cardinality over all $k$-truncated resolving sets of $G$. For a function $g$ defined on $V(G)$ and for $U \subseteq V(G)$, let $g(U)=\sum_{s\in U}g(s)$. A real-valued function $g:V(G) \rightarrow[0,1]$ is a \emph{$k$-truncated resolving function} of $G$ if $g(R_k\{x,y\}) \ge 1$ for any distinct $x, y\in V(G)$, and the \emph{fractional $k$-truncated metric dimension} $\dim_{k,f}(G)$ of $G$ is $\min\{g(V(G)): g \mbox{ is a $k$-truncated resolving function of }G\}$. Note that $\dim_{k,f}(G)$ reduces to $\dim_k(G)$ if the codomain of $k$-truncated resolving functions is restricted to $\{0,1\}$, and $\dim_{k,f}(G)=\dim_f(G)$ if $k$ is at least the diameter of $G$. In this paper, we study the fractional $k$-truncated metric dimension of graphs. For any connected graph $G$ of order $n\ge2$, we show that $1 \le \dim_{k,f}(G) \le \frac{n}{2}$; we characterize $G$ satisfying $\dim_{k,f}(G)$ equals $1$ and $\frac{n}{2}$, respectively. We examine $\dim_{k,f}(G)$ of some graph classes. We also show the existence of non-isomorphic graphs $G$ and $H$ such that $\dim_k(G)=\dim_k(H)$ and $\dim_{k,f}(G)\neq \dim_{k,f}(H)$, and we examine the relation among $\dim(G)$, $\dim_f(G)$, $\dim_k(G)$ and $\dim_{k,f}(G)$. We conclude the paper with some open problems.
\end{abstract}

\noindent\small {\bf{Keywords:}} metric dimension, fractional metric dimension, $k$-truncated metric dimension, distance-$k$ dimension, fractional $k$-truncated metric dimension, fractional distance-$k$ dimension, trees, cyles\\
\small {\bf{2010 Mathematics Subject Classification:}} 05C12, 05C72, 05C38


\section{Introduction}

Let $G$ be a finite, simple, undirected, and connected graph with vertex set $V(G)$ and edge set $E(G)$. The \emph{distance} between two vertices $x, y \in V(G)$, denoted by $d(x, y)$, is the minimum number of edges on a path connecting $x$ and $y$ in $G$. The \emph{diameter}, $\diam(G)$, of $G$ is $\max\{d(x,y): x,y \in V(G)\}$. Let $\mathbb{Z}^+$ denote the set of positive integers. For $k\in\mathbb{Z}^+$ and for two vertices $x,y\in V(G)$, let $d_k(x,y)=\min\{d(x,y),k+1\}$. 

Metric dimension, introduced in~\cite{harary} and~\cite{slater}, is a graph parameter that has been studied extensively. For distinct $x,y\in V(G)$, let $R\{x,y\}=\{z\in V(G): d(x,z) \neq d(y,z)\}$. A vertex subset $S\subseteq V(G)$ is a \emph{resolving set} of $G$ if $|S \cap R\{x,y\}| \ge 1$ for any pair of distinct $x,y \in V(G)$, and the \emph{metric dimension} $\dim(G)$ of $G$ is the minimum cardinality over all resolving sets of $G$. For $k\in\mathbb{Z}^+$ and for distinct $x,y \in V(G)$, let $R_k\{x,y\}=\{z\in V(G): d_k(x,z) \neq d_k(y,z)\}$. A vertex subset $S\subseteq V(G)$ is a \emph{$k$-truncated resolving set} (also called a distance-$k$ resolving set) of $G$ if $|S \cap R_k\{x,y\}| \ge 1$ for any pair of distinct $x,y \in V(G)$, and the \emph{$k$-truncated metric dimension} (also called the distance-$k$ dimension) $\dim_k(G)$ of $G$ is the minimum cardinality over all $k$-truncated resolving sets of $G$. Notice that $\dim_k(G)=\dim(G)$ if $k\ge \diam(G)-1$. The metric dimension of a metric space $(V, d_k)$ is studied in~\cite{beardon}. The $k$-truncated metric dimension corresponds to the $(1, k+1)$-metric dimension in~\cite{moreno1, moreno2}. We note that $\dim_1(G)$ is also called the adjacency dimension of $G$ in~\cite{adim}. For detailed results on $\dim_k(G)$, we refer to~\cite{distKdim}. It is known that determining the metric dimension and the $k$-truncated metric dimension of a general graph are NP-hard problems; see~\cite{moreno2, Juan, NP, tree2}.

The fractionalization of various graph parameters has been extensively studied (see~\cite{frac}). For definition and a formulation of fractional metric dimension as the optimal solution to a linear programming problem by relaxing a condition of the integer programming problem for metric dimension, see~\cite{oellermann_1, oellermann_2}. The fractional metric dimension of graphs was officially studied in~\cite{fracdim1}. For a function $g$ defined on $V(G)$ and for $U \subseteq V(G)$, let $g(U)=\sum_{s \in U} g(s)$. A real-valued function $g: V(G) \rightarrow [0,1]$ is a \emph{resolving function} of $G$ if $g(R\{x,y\}) \ge 1$ for any distinct vertices $x,y\in V(G)$. The \emph{fractional metric dimension}, $\dim_f(G)$, of $G$ is $\min\{g(V(G)): g \mbox{ is a resolving function of }G\}$. Notice that $\dim_f(G)$ reduces to $\dim(G)$ if the codomain of resolving functions is restricted to $\{0,1\}$. 

For $k\in\mathbb{Z}^+$, a real-valued function $h:V(G)\rightarrow [0,1]$ is a \emph{$k$-truncated resolving function} (also called a distance-$k$ resolving function) of $G$ if $h(R_k\{x,y\}) \ge 1$ for any pair of distinct $x,y\in V(G)$. The \emph{fractional $k$-truncated metric dimension} (also called the fractional distance-$k$ dimension) of $G$, denoted by $\dim_{k,f}(G)$, is $\min\{h(V(G)): h \mbox{ is a $k$-truncated resolving function of }G\}$. Note that $\dim_{k,f}(G)=\dim_k(G)$ if the codomain of $k$-truncated resolving functions is restricted to $\{0,1\}$, $\dim_{k,f}(G)=\dim_f(G)$ if $k \ge \diam(G)-1$, and $\dim_{k,f}(G)=\dim(G)$ if $k \ge \diam(G)-1$ and the codomain of $k$-truncated resolving functions is restricted to $\{0,1\}$. 

In this paper, we initiate the study of the fractional $k$-truncated metric dimension of graphs. Let $P_n$, $C_n$ and $K_n$, respectively, denote the path, the cycle and the complete graph on $n$ vertices. In Section~\ref{Sec_bounds}, for any connected graph $G$ of order $n\ge 2$ and for any $k\in\mathbb{Z}^+$, we show that $1 \le \dim_{k,f}(G) \le \frac{n}{2}$ and we characterize $G$ satisfying $\dim_{k,f}(G)$ equals $1$ and $\frac{n}{2}$, respectively. Along the way, we state some useful observations. In Section~\ref{Sec_graphs}, we examine $\dim_{k,f}(G)$ when $G$ is the Petersen graph, a wheel graph, a fan graph, a complete multipartite graph, a cycle, a grid graph or a tree. In Section~\ref{Sec_comparison}, we examine the relation among $\dim(G)$, $\dim_f(G)$, $\dim_k(G)$ and $\dim_{k,f}(G)$. We show the existence of non-isomorphic graphs $G$ and $H$ such that $\dim_k(G)=\dim_k(H)$ and $\dim_{k,f}(G)\neq \dim_{k,f}(H)$. Based on the construction in~\cite{broadcast}, we also show the existence of two connected graphs $H$ and $G$ with $H \subset G$ such that $\frac{\dim_{k,f}(H)}{\dim_{k,f}(G)}$ can be arbitrarily large. We conclude the paper with some open problems.


\section{Some observations and bounds on $\dim_{k,f}(G)$}\label{Sec_bounds}

In this section, for any connected graph $G$ of order $n\ge2$ and for any $k\in\mathbb{Z}^+$, we show that $1\le \dim_{k,f}(G)\le \frac{n}{2}$; we characterize $G$ satisfying $\dim_{k,f}(G)$ equals $1$ and $\frac{n}{2}$, respectively.

We begin with some observations. For $v\in V(G)$, let $N(v)=\{w\in V(G): vw\in E(G)\}$ and $N[v]=N(v) \cup \{v\}$. More generally, for $k\in \mathbb{Z}^+$ and for $v\in V(G)$, let $N_k[v]=\{w\in V(G): d(v, w)\le k\}$ and $N_k(v)=\{w\in V(G): d(v, w)= k\}$. Two vertices $x,y\in V(G)$ are called \emph{twins} if $N(x)-\{y\}=N(y)-\{x\}$. Hernando et al.~\cite{Hernando} observed that the twin relation is an equivalence relation and that an equivalence class under it, called a twin equivalence class, induces either a clique or an independence set. 

\begin{observation}\label{obs_twin}
Let $x,y\in V(G)$ with $x\neq y$, and let $k\in \mathbb{Z}^+$. 
\begin{itemize}
\item[(a)] \emph{\cite{fracdim_yi}} If $x$ and $y$ are twins, then $g(x)+g(y) \ge 1$ for any resolving function $g$ of $G$.
\item[(b)] If $x$ and $y$ are twins, then $h(x)+h(y) \ge 1$ for any $k$-truncated resolving function $h$ of $G$.
\item[(c)] For any $k\ge1$, $R_k\{x,y\}=(N_k[x] \cup N_k[y])-\cup_{i=1}^{k}(N_i(x) \cap N_i(y))$.
\end{itemize}
\end{observation}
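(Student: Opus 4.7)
My plan is to prove part (a) by citation (it is attributed to \cite{fracdim_yi}), and then derive part (b) as an immediate consequence of part (c), which in turn is proved by a direct double-inclusion argument.

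For part (c), I would show both containments of the identity $R_k\{x,y\} = (N_k[x]\cup N_k[y])-\bigcup_{i=1}^{k}(N_i(x)\cap N_i(y))$. For the forward inclusion, suppose $z\in R_k\{x,y\}$. If $z$ lay outside $N_k[x]\cup N_k[y]$, then both $d(x,z)$ and $d(y,z)$ would exceed $k$, forcing $d_k(x,z)=d_k(y,z)=k+1$, contradicting $z\in R_k\{x,y\}$. And if $z\in N_i(x)\cap N_i(y)$ for some $1\le i\le k$, then $d_k(x,z)=i=d_k(y,z)$, another contradiction. For the reverse inclusion, assume $z$ lies in the right-hand side. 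Without loss of generality $z\in N_k[x]$, so $d(x,z)=j$ for some $0\le j\le k$. If $j=0$ then $z=x\ne y$ and $d_k(y,z)\ge 1>0=d_k(x,z)$. Otherwise $j\ge 1$, and the exclusion from $\bigcup_{i=1}^k(N_i(x)\cap N_i(y))$ forces $d(y,z)\ne j$; since $j\le k$, truncation preserves this inequality, giving $d_k(x,z)\ne d_k(y,z)$.

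For part (b), the cleanest route is to show that when $x,y$ are twins, $R_k\{x,y\}=\{x,y\}$; then the defining condition $h(R_k\{x,y\})\ge 1$ immediately yields $h(x)+h(y)\ge 1$. The containment $\{x,y\}\subseteq R_k\{x,y\}$ is trivial since $d_k(x,x)=0<d_k(y,x)$ and vice versa. For the reverse containment, I would prove the stronger statement that $d(x,z)=d(y,z)$ for every $z\notin\{x,y\}$ whenever $x,y$ are twins. Splitting into the independent-twin case ($N(x)=N(y)$) and the adjacent-twin case ($N[x]=N[y]$), the first-edge of any shortest $x$-$z$ path can be rerouted through the other twin: if that first edge avoids $y$, it is a neighbor of $y$ as well and gives $d(y,z)\le d(x,z)$; symmetry finishes the job. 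In the adjacent case one must rule out the possibility that every shortest $x$-$z$ path starts with the edge $xy$ and symmetrically, which would force $d(x,z)=1+d(y,z)$ and $d(y,z)=1+d(x,z)$ simultaneously---an absurdity.

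There is no real obstacle here; the only minor subtlety is the $i=0$ boundary case in part (c) (the statement writes the union only from $i=1$ to $k$, so one must check $z=x$ and $z=y$ separately, which works out because $x\neq y$). Part (b) could alternatively be deduced from part (c) by observing that for twins, $N_k[x]=N_k[y]$ and $N_i(x)-\{y\}=N_i(y)-\{x\}$ for all $i\ge 1$, so the set-theoretic right-hand side collapses to $\{x,y\}$; I prefer the direct distance argument because it avoids extra bookkeeping.
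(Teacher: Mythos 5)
Your proposal is correct: the paper states this as an Observation with no proof (part (a) is cited, parts (b) and (c) are left as routine verifications), and your double-inclusion argument for (c) and the standard fact that twins are equidistant from every third vertex for (b) are precisely the checks being left implicit. You also correctly handle the two boundary points that need care, namely the $i=0$ case excluded from the union in (c) and the adjacent-twin case in (b).
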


\begin{observation}\label{obs_bounds}
Let $G$ be a non-trivial connected graph, and let $k,k'\in \mathbb{Z}^+$. Then
\begin{itemize}
\item[(a)] \emph{\cite{fracdim1}} $\dim_f(G)\le \dim(G)$;
\item[(b)] \emph{\cite{beardon, moreno1}} if $k> k'$, then $\dim(G)\le \dim_k(G) \le \dim_{k'}(G)\le\dim_1(G)$;
\item[(c)] $\dim_f(G)\le \dim_{k,f}(G) \le \dim_k(G)$;
\item[(d)] if $k> k'$, then $\dim_f(G)\le \dim_{k,f}(G) \le \dim_{k',f}(G)\le\dim_{1,f}(G)\le \dim_1(G)$.
\end{itemize}
\end{observation}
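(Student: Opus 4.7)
The plan is to handle parts (c) and (d), since (a) and (b) are cited. The underlying workhorse in both is a monotonicity containment on the resolving sets $R\{x,y\}$ and $R_k\{x,y\}$.

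For part (c), I would first prove the set containment $R_k\{x,y\}\subseteq R\{x,y\}$ for every $k\in\mathbb{Z}^+$ and every pair of distinct vertices $x,y$. I would argue the contrapositive: if $d(x,z)=d(y,z)$, then $d_k(x,z)=\min\{d(x,z),k+1\}=\min\{d(y,z),k+1\}=d_k(y,z)$. Once the containment is in hand, any $k$-truncated resolving function $h$ satisfies $h(R\{x,y\})\ge h(R_k\{x,y\})\ge 1$, so $h$ is also a resolving function, and minimizing over all such $h$ gives $\dim_f(G)\le \dim_{k,f}(G)$. For the upper inequality, I would note that the characteristic function $\chi_S$ of any $k$-truncated resolving set $S$ is a $\{0,1\}$-valued $k$-truncated resolving function with $\chi_S(V(G))=|S|$; applying this to a minimum $k$-truncated resolving set yields $\dim_{k,f}(G)\le \dim_k(G)$.

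For part (d), the main new ingredient is the containment $R_{k'}\{x,y\}\subseteq R_k\{x,y\}$ whenever $k>k'$. Again I would proceed by contrapositive case analysis on the pair $(d(x,z),d(y,z))$ relative to the thresholds. If both distances are at most $k'$, then the $d_k$ and $d_{k'}$ values both equal the true distances, so equality transfers; if both exceed $k'$, then $d_{k'}(x,z)=k'+1=d_{k'}(y,z)$, so $z\notin R_{k'}\{x,y\}$ and there is nothing to check; and if exactly one of the distances is at most $k'$, then the $d_{k'}$ values differ and a short comparison (the capped value on the large side is at least $k'+1$, strictly exceeding the small-side value) shows the $d_k$ values differ as well. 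Given this containment, every $k'$-truncated resolving function is a $k$-truncated resolving function, yielding $\dim_{k,f}(G)\le \dim_{k',f}(G)$ and in particular $\dim_{k,f}(G)\le \dim_{1,f}(G)$. The left inequality $\dim_f(G)\le \dim_{k,f}(G)$ is already established in part (c), and $\dim_{1,f}(G)\le \dim_1(G)$ follows from the characteristic-function argument used in (c) applied with $k=1$.

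The only step requiring real attention is the mixed case in the containment argument for $R_{k'}\{x,y\}\subseteq R_k\{x,y\}$; everything else is either a direct substitution into the definition of $d_k$ or the standard observation that integer resolving sets embed into the fractional setting via their characteristic functions. No lemma outside the excerpt is needed.
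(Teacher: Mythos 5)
Your proposal is correct. The paper states this as an Observation without proof (citing prior work only for (a) and (b)), and your argument --- the containments $R_{k'}\{x,y\}\subseteq R_k\{x,y\}\subseteq R\{x,y\}$ for $k>k'$ together with the characteristic-function embedding of integer resolving sets into the fractional setting --- is exactly the standard reasoning the paper implicitly relies on, with the mixed case of the containment handled correctly.
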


\begin{observation}\label{obs_diam}
Let $G$ be a connected graph with $\diam(G)=d$, and let $k\in\mathbb{Z}^+$. 
\begin{itemize}
\item[(a)] \emph{\cite{moreno1}} If $k\ge d-1$, then $\dim_k(G)=\dim(G)$.
\item[(b)] If $k \ge d-1$, then $\dim_{k,f}(G)=\dim_f(G)$.
\end{itemize}
\end{observation}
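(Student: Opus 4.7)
The plan is to show that under the hypothesis $k \ge d-1$, the sets $R_k\{x,y\}$ and $R\{x,y\}$ coincide for every pair of distinct vertices $x,y \in V(G)$, from which the equality of the two fractional parameters is immediate.

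First I would note that for any $u, v \in V(G)$ we have $d(u,v) \le \diam(G) = d \le k+1$, so $d_k(u,v) = \min\{d(u,v), k+1\} = d(u,v)$. In particular, for any distinct $x,y \in V(G)$ and any $z \in V(G)$, the condition $d_k(x,z) \neq d_k(y,z)$ is the same as $d(x,z) \neq d(y,z)$, hence $R_k\{x,y\} = R\{x,y\}$.

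Next I would observe that this set-equality makes the defining conditions of a resolving function and a $k$-truncated resolving function identical: a function $h : V(G) \to [0,1]$ satisfies $h(R_k\{x,y\}) \ge 1$ for all distinct $x,y$ if and only if $h(R\{x,y\}) \ge 1$ for all distinct $x,y$. Therefore the feasible regions of the two minimization problems defining $\dim_{k,f}(G)$ and $\dim_f(G)$ coincide, and the minima of $h(V(G))$ over these identical families are equal, yielding $\dim_{k,f}(G) = \dim_f(G)$.

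There is essentially no obstacle here; the argument is a direct translation of Observation~\ref{obs_diam}(a) from the integer to the fractional setting, relying only on the trivial bound $d(u,v) \le \diam(G) \le k+1$. The only thing to be mindful of is to phrase the set equality $R_k\{x,y\} = R\{x,y\}$ cleanly before invoking the definitions of the two resolving-function families.
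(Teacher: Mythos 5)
Your proof is correct and is exactly the intended argument: the paper states this as an observation without proof, and the reasoning it relies on is precisely that $k\ge d-1$ forces $d_k(u,v)=d(u,v)$ for all pairs, hence $R_k\{x,y\}=R\{x,y\}$ and the two families of resolving functions coincide. Nothing is missing.
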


Next, we recall some results involving the bounds of the $k$-truncated metric dimension and the fractional metric dimension of graphs.

\begin{theorem}\label{characterization_mixed}
Let $G$ be a connected graph of order $n\ge2$, and let $k\in\mathbb{Z}^+$. Then 
\begin{itemize}
\item[(a)] \emph{\cite{moreno1}} $1\le\dim_k(G)\le n-1$, and $\dim_k(G)=1$ if and only if $G\in\cup_{i=2}^{k+2}\{P_i\}$;
\item[(b)] \emph{\cite{fracdim1, frac_kang}} $1 \le \dim_f(G)\le \frac{n}{2}$, and $\dim_f(G)=\frac{n}{2}$ if and only if there exists a bijection $\phi:V(G) \rightarrow V(G)$ such that $\phi(v)\neq v$ and $|R\{v, \phi(v)\}|=2$ for all $v\in V(G)$;
\item[(c)] \emph{\cite{fracsdim}} $\dim_f(G)=1$ if and only if $G=P_n$.
\end{itemize}
\end{theorem}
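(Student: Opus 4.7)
The plan is to prove each of the three parts separately.

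For part (a), the lower bound $\dim_k(G)\ge 1$ is immediate. The upper bound $\dim_k(G)\le n-1$ follows by verifying that $V(G)\setminus\{v\}$ is a $k$-truncated resolving set for any $v\in V(G)$: given distinct vertices $x,y$, at least one of them lies in this set and resolves the pair against itself. For the characterization, assume $\dim_k(G)=1$ with resolving vertex $v$; then $w\mapsto d_k(v,w)$ injects $V(G)$ into $\{0,1,\ldots,k+1\}$, forcing $n\le k+2$. A BFS argument from $v$ then pins down the structure: injectivity means $|N_i(v)|\le 1$ for $0\le i\le k$ and at most one vertex has $d(v,\cdot)\ge k+1$, and since the unique vertex at distance $i+1$ can only be adjacent (from below) to the unique vertex at distance $i$, the graph is forced to be $P_n$ with $v$ as an endpoint.

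For part (b), the upper bound follows by taking the constant function $g\equiv\frac{1}{2}$: since $\{x,y\}\subseteq R\{x,y\}$ we have $g(R\{x,y\})\ge 1$, and so $\dim_f(G)\le g(V(G))=\frac{n}{2}$. For the easy direction of the characterization, given the bijection $\phi$, the identity $R\{v,\phi(v)\}=\{v,\phi(v)\}$ forces $g(v)+g(\phi(v))\ge 1$ for any resolving function $g$; summing over $v$ and using bijectivity of $\phi$ yields $2g(V(G))\ge n$. The converse is the main obstacle: assuming $\dim_f(G)=\frac{n}{2}$, one must construct $\phi$. I would argue that for each $v$, if no partner $w$ with $|R\{v,w\}|=2$ existed, then decreasing $g(v)$ slightly below $\frac{1}{2}$ would preserve the constraint $g(R\{v,w\})\ge 1$ for every $w$ (since such sets have cardinality at least $3$), contradicting optimality; the resulting partner map still requires care to show it is symmetric, hence a bijection.

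For part (c), the ``if'' direction is easy: for $G=P_n$ with endpoint $u$, the function placing mass $1$ at $u$ and $0$ elsewhere is a resolving function since distances from $u$ are all distinct, so $u\in R\{x,y\}$ for every pair. For the converse, suppose $\dim_f(G)=1$ with optimal $g$. Since $g(V(G))=1$ and $g(R\{x,y\})\ge 1$ for every distinct pair $\{x,y\}$, the support of $g$ must be contained in $R\{x,y\}$ for every pair; hence any $u$ with $g(u)>0$ individually resolves every pair, so $\{u\}$ is a resolving set of $G$. Applying part (a) with $k\ge\diam(G)-1$ (using Observation~\ref{obs_diam}) gives $\dim(G)=\dim_k(G)=1$, which forces $G=P_n$.
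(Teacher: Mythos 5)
This theorem is stated in the paper purely as a recollection of known results, attributed to \cite{moreno1}, \cite{fracdim1, frac_kang} and \cite{fracsdim}; the paper supplies no proof of its own, so there is nothing internal to compare your argument against. Judged on its own terms, your treatment of (a) and (c) is correct: injectivity of $w\mapsto d_k(v,w)$ into $\{0,1,\dots,k+1\}$ forces $n\le k+2$ and the layered/BFS argument does pin down $P_n$ (you should add the one-line converse, that an endpoint of $P_i$ with $i\le k+2$ has all $d_k$-distances distinct and hence is a $k$-truncated resolving vertex); and in (c), deducing from $g(V(G))=1$ and $g(R\{x,y\})\ge1$ that $\mathrm{supp}(g)\subseteq R\{x,y\}$ for every pair, so that any positively weighted vertex is a resolving vertex, is exactly the right move.

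The one genuine gap is the forward direction of (b). Your perturbation of the constant-$\frac12$ function correctly shows that every vertex $v$ admits \emph{some} $w\ne v$ with $|R\{v,w\}|=2$ (and your implicit use of the fact that any size-two constraint set containing $v$ must be of the form $R\{v,w\}$ is valid, since $R\{x,y\}\supseteq\{x,y\}$ always). But ``every vertex has a partner'' is strictly weaker than the existence of a fixed-point-free bijection $\phi$ with $|R\{v,\phi(v)\}|=2$: a symmetric relation in which every vertex has degree at least one need not carry such a permutation (a star is the standard counterexample), so the issue is not symmetry but injectivity. The missing step is that $R\{u,v\}=\{u,v\}$ holds if and only if $d(z,u)=d(z,v)$ for all $z\notin\{u,v\}$, which is equivalent to $N(u)-\{v\}=N(v)-\{u\}$, i.e.\ to $u$ and $v$ being twins; and the twin relation is an equivalence relation \cite{Hernando}. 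Hence the partnered vertices decompose into twin equivalence classes of size at least two, and a cyclic shift within each class produces the required fixed-point-free bijection. With that observation inserted, your proof of (b) closes.
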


For the characterization of connected graphs $G$ of order $n$ satisfying $\dim_k(G)=n-2$ and $\dim_k(G)=n-1$ respectively, see~\cite{distKdim, thesis_cs}. For an explicit characterization of graphs $G$ satisfying $\dim_f(G)=\frac{|V(G)|}{2}$, we recall the following construction from~\cite{fracdim2}. Let $\mathcal{K}=\{K_a: a \ge 2\}$ and $\overline{\mathcal{K}}=\{\overline{K}_b: b \ge 2\}$, where $\overline{G}$ denotes the complement of a graph $G$. Let $H[\mathcal{K} \cup \overline{\mathcal{K}}]$ be the family of graphs obtained from a connected graph $H$ by replacing each vertex $u_i \in V(H)$ by a graph $H_i \in \mathcal{K} \cup \overline{\mathcal{K}}$, and each vertex in $H_i$ is adjacent to each vertex in $H_j$ if and only if $u_iu_j \in E(H)$.

\begin{theorem}~\emph{\cite{fracdim2}}\label{n_2}
Let $G$ be a connected graph of order at least two. Then $\dim_f(G)=\frac{|V(G)|}{2}$ if and only if $G \in H[\mathcal{K} \cup \overline{\mathcal{K}}]$ for some connected graph $H$.
\end{theorem}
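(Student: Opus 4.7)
The plan is to invoke Theorem~\ref{characterization_mixed}(b), which characterizes $\dim_f(G)=\frac{|V(G)|}{2}$ via the existence of a fixed-point-free bijection $\phi$ with $|R\{v,\phi(v)\}|=2$ for every $v$, and to translate that condition into a structural statement about twin equivalence classes. My first step is to establish the key lemma: for distinct $x,y\in V(G)$, $|R\{x,y\}|=2$ if and only if $x$ and $y$ are twins. Since $d(x,x)=0\neq d(y,x)$ forces $x,y\in R\{x,y\}$, the cardinality condition is equivalent to $d(z,x)=d(z,y)$ for every third vertex $z$. Choosing $z\in N(x)\cup N(y)$ immediately yields $N(x)\setminus\{y\}=N(y)\setminus\{x\}$, and conversely twins sit at equal distance from every third vertex by a short shortest-path argument (split on whether $x,y$ are adjacent; in each case the penultimate vertex on a shortest $z$-to-$x$ path lies in $N(y)$ by the twin property, and the symmetric estimate closes the equality).

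For the easier $(\Leftarrow)$ direction, suppose $G\in H[\mathcal{K}\cup\overline{\mathcal{K}}]$, so that $V(G)$ is partitioned into sets $V(H_1),\ldots,V(H_m)$, each of cardinality at least $2$, each inducing either a clique or its complement, with any two distinct parts either completely joined or completely non-adjacent according to $E(H)$. A direct inspection shows that any two vertices in a common $H_i$ share the same neighborhood outside $\{x,y\}$, hence are twins in $G$. Choose $\phi$ to be a fixed-point-free permutation on each $V(H_i)$, which is possible because $|V(H_i)|\ge 2$. The resulting bijection of $V(G)$ satisfies the hypotheses of Theorem~\ref{characterization_mixed}(b), so $\dim_f(G)=\frac{|V(G)|}{2}$.

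For the $(\Rightarrow)$ direction, let $\phi$ realize the bijection promised by Theorem~\ref{characterization_mixed}(b); by the key lemma, $v$ and $\phi(v)$ are twins for every $v$. Writing the twin equivalence classes of $G$ as $T_1,\ldots,T_m$, the condition that $\phi(v)$ is a twin of $v$ forces each $T_i$ to be $\phi$-invariant and $\phi$ to act on $T_i$ without fixed points, so $|T_i|\ge 2$. By the observation of Hernando et~al.\ recalled just before Observation~\ref{obs_twin}, each $T_i$ induces a clique or an independent set. The remaining structural ingredient is the ``all-or-nothing'' join between distinct twin classes: if $v\in T_i$ and $w\in T_j$ with $i\neq j$ are adjacent, then for any $v'\in T_i\setminus\{v\}$ the twin property applied to $v,v'$ (together with $w\neq v'$ since $w\in T_j$) forces $w\in N(v')$; running the same argument on the other side shows $T_i$ and $T_j$ are completely joined in $G$. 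Define $H$ on vertices $u_1,\ldots,u_m$ with $u_iu_j\in E(H)$ precisely when $T_i,T_j$ are completely joined; connectivity of $G$ passes to $H$. Replacing each $u_i$ by $K_{|T_i|}$ or $\overline{K}_{|T_i|}$ according to the type of $T_i$ exhibits $G\in H[\mathcal{K}\cup\overline{\mathcal{K}}]$, as desired.

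The main obstacle I anticipate is the equivalence $|R\{x,y\}|=2\Longleftrightarrow x,y$ are twins, specifically the implication that a shared adjacency pattern propagates to equality of distances to every third vertex. Once that translation is in place, the rest of the argument reduces to standard bookkeeping on twin equivalence classes together with the easy all-or-nothing join property between distinct classes.
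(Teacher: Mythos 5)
Your proof is correct, but be aware that the paper gives no proof of this statement at all: it is quoted from \cite{fracdim2}, so the only thing to compare against is the technique the author says she borrows from that reference when proving the $\dim_{1,f}$ analogue (Theorem~\ref{frac_adim_n2}). There the hard direction is handled more directly: if some twin equivalence class were a singleton $\{z\}$, the function with $h(z)=0$ and $h\equiv\frac{1}{2}$ elsewhere is shown to be resolving, giving $\dim_f(G)\le\frac{n-1}{2}<\frac{n}{2}$; hence every class has size at least two and the $H[\mathcal{K}\cup\overline{\mathcal{K}}]$ structure follows. Your route instead passes through the bijection criterion of Theorem~\ref{characterization_mixed}(b) together with the lemma that $|R\{x,y\}|=2$ if and only if $x$ and $y$ are twins; that reduction is clean, and your verification that distinct twin classes are completely joined or completely non-adjacent is exactly the bookkeeping needed to exhibit $G$ as a member of $H[\mathcal{K}\cup\overline{\mathcal{K}}]$. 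The one detail to nail down in your key lemma is the case where the penultimate vertex of the chosen shortest $z$--$x$ path is $y$ itself, since then it does not lie in $N(y)$; assuming without loss of generality that $d(z,x)\le d(z,y)$ disposes of it, because that case would force $d(z,y)=d(z,x)-1$. With that sentence added your argument is complete. The trade-off between the two approaches: the singleton-class argument is self-contained modulo the trivial upper bound $\dim_f(G)\le\frac{n}{2}$, whereas yours leans on the bijection characterization (itself a nontrivial cited result) but makes explicit the reusable equivalence between $|R\{x,y\}|=2$ and the twin relation.
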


Next, we obtain the bounds on $\dim_{k,f}(G)$.

\begin{proposition}\label{frac_kdim_bounds}
For any connected graph $G$ of order $n \ge 2$ and for any $k\in\mathbb{Z}^+$, $1\le \dim_{k,f}(G)\le \frac{n}{2}$.
\end{proposition}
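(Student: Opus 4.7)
My plan is to establish the two bounds separately by exhibiting feasible functions and exploiting the basic observation that the endpoints of a pair always resolve themselves under the truncated distance.

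For the lower bound, I would first argue that $R_k\{x,y\}$ is nonempty for every pair of distinct vertices $x,y\in V(G)$. Indeed, $d_k(x,x)=0$ while $d_k(y,x)=\min\{d(x,y),k+1\}\ge 1$ since $x\neq y$, so $x\in R_k\{x,y\}$. Fixing any such pair, if $h$ is a $k$-truncated resolving function of $G$ then
\[
h(V(G))\ \ge\ h(R_k\{x,y\})\ \ge\ 1,
\]
because $h$ is nonnegative. Taking the infimum over all such $h$ yields $\dim_{k,f}(G)\ge 1$.

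For the upper bound, I would exhibit the constant function $h\equiv \tfrac{1}{2}$ on $V(G)$ and show that it is a $k$-truncated resolving function of $G$. By the same observation as above, $x\in R_k\{x,y\}$; by symmetry $y\in R_k\{x,y\}$, so $|R_k\{x,y\}|\ge 2$ for every pair of distinct vertices. Hence
\[
h(R_k\{x,y\})\ =\ \tfrac{1}{2}\,|R_k\{x,y\}|\ \ge\ 1,
\]
confirming that $h$ is a $k$-truncated resolving function. Since $h(V(G))=\tfrac{n}{2}$, we conclude $\dim_{k,f}(G)\le \tfrac{n}{2}$.

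There is essentially no hard step: the argument hinges only on the trivial fact that each of $x$ and $y$ distinguishes itself from the other under the truncated metric, which forces $|R_k\{x,y\}|\ge 2$. The only subtlety worth spelling out is the need to verify $d_k(x,y)\ge 1$ for $x\neq y$, which is immediate from the definition $d_k(x,y)=\min\{d(x,y),k+1\}$.
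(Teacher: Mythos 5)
Your proposal is correct and follows essentially the same route as the paper: the lower bound comes from nonnegativity together with the fact that $R_k\{x,y\}\supseteq\{x,y\}$, and the upper bound from checking that the constant function $\tfrac{1}{2}$ is a $k$-truncated resolving function. Your explicit verification that $d_k(x,x)=0\neq d_k(y,x)$ is a welcome spelling-out of a step the paper merely asserts.
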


\begin{proof}
Let $k\in\mathbb{Z}^+$, and let $G$ be a connected graph of order $n\ge2$. By definition, $\dim_{k,f}(G)\ge1$. If $g:V(G) \rightarrow [0,1]$ is a function defined by $g(v)=\frac{1}{2}$ for each $v\in V(G)$, then $R_k\{x,y\}\supseteq \{x,y\}$ and $g(R_k\{x,y\}) \ge g(x)+g(y)=1$ for any distinct $x,y\in V(G)$; thus, $g$ is a $k$-truncated resolving function of $G$ with $g(V(G))=\frac{n}{2}$. So, $\dim_{k,f}(G) \le \frac{n}{2}$.~\hfill 
\end{proof}

Next, we characterize connected graphs $G$ satisfying $\dim_{k,f}(G)=1$ for any $k\in\mathbb{Z}^+$. 

\begin{theorem}\label{frac_kdim=1}
Let $G$ be a non-trivial connected graph, and let $k\in\mathbb{Z}^+$. Then $\dim_{k,f}(G)=1$ if and only if $G \in \cup_{i=2}^{k+2}\{P_i\}$.
\end{theorem}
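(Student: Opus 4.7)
The plan is to prove the apparently stronger equivalence $\dim_{k,f}(G)=1 \iff \dim_k(G)=1$; once this is in hand, Theorem~\ref{characterization_mixed}(a) delivers the claimed characterization directly.

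I would first dispose of $(\Leftarrow)$: if $\dim_k(G)=1$, then Observation~\ref{obs_bounds}(c) gives $\dim_{k,f}(G) \le \dim_k(G)=1$, while Proposition~\ref{frac_kdim_bounds} gives $\dim_{k,f}(G) \ge 1$, forcing equality.

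For $(\Rightarrow)$, assume $\dim_{k,f}(G)=1$. The key observation is that the minimum in the definition of $\dim_{k,f}(G)$ is attained: the set of $k$-truncated resolving functions is a nonempty closed subset of the compact cube $[0,1]^{V(G)}$, and the objective $h \mapsto h(V(G))$ is continuous. Fix a minimizer $h$, so $h(V(G))=1$. For any distinct $x,y \in V(G)$, combining $h(R_k\{x,y\}) \ge 1 = h(V(G))$ with $h \ge 0$ forces $h$ to vanish on $V(G) \setminus R_k\{x,y\}$. Thus the support $T := \{v \in V(G) : h(v) > 0\}$ is contained in $R_k\{x,y\}$ for every distinct pair, whence
\[
T \subseteq \bigcap_{x \neq y} R_k\{x,y\}.
\]
Since $h(V(G))=1 > 0$, $T$ is nonempty; any $v \in T$ resolves every pair under $d_k$, so $\{v\}$ is a $k$-truncated resolving set and $\dim_k(G) \le 1$, hence $\dim_k(G)=1$. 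Theorem~\ref{characterization_mixed}(a) then yields $G \in \cup_{i=2}^{k+2}\{P_i\}$.

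The only mildly delicate step is the attainment of the minimum needed to extract a minimizer $h$ with $h(V(G))=1$; once that is granted, the tight equality $h(R_k\{x,y\}) = h(V(G))$ in every constraint does all the work. I do not foresee a substantive obstacle, in particular no direct path-by-path computation of $\dim_{k,f}(P_n)$ is required.
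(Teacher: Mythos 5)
Your proof is correct, and it takes a genuinely different route from the paper's. The paper proves the forward direction in two stages: it first invokes the known fact that $\dim_f(G)=1$ if and only if $G=P_n$ (Theorem~\ref{characterization_mixed}(c)) together with $\dim_{k,f}(G)\ge\dim_f(G)$ to reduce to paths, and then, for $P_n$ with $n\ge k+3$, it exhibits explicit families of constraints ($R_k\{u_1,u_2\}$, $R_k\{u_{n-1},u_n\}$, and the sets $R_k\{u_{i-1},u_{i+1}\}$) and sums the resulting inequalities to force $g(V(P_n))>1$. You instead prove the cleaner intermediate equivalence $\dim_{k,f}(G)=1\iff\dim_k(G)=1$ by a complementary-slackness/support argument: a minimizer $h$ with $h(V(G))=1$ must have its (nonempty) support inside every $R_k\{x,y\}$, since $h(R_k\{x,y\})\le h(V(G))$ with equality forced, and any support vertex is then a singleton $k$-truncated resolving set; Theorem~\ref{characterization_mixed}(a) finishes. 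The attainment of the minimum that you flag is unproblematic (the feasible region is a nonempty compact polytope, and the paper's definition already presupposes it). Your argument is shorter, avoids the path computations entirely, and isolates a general principle (the fractional parameter equals $1$ exactly when the integer one does) that is not stated in the paper; what it does not buy is the quantitative lower bound $\dim_{k,f}(P_n)\ge\frac{6+2k-n}{5+2k-n}$ for $k+3\le n\le 2k+3$, which the paper's computation produces as a byproduct and reuses in Theorem~\ref{frac_distK_path}(b).
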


\begin{proof}
Let $G$ be a connected graph of order $n\ge2$, and let $k\in\mathbb{Z}^+$. 

($\Leftarrow$) Let $G \in \cup_{i=2}^{k+2}\{P_i\}$. Then $1=\dim_f(G) \le \dim_{k,f}(G) \le \dim_k(G)=1$ by Observation~\ref{obs_bounds}(c) and Theorem~\ref{characterization_mixed}(a)(c). So, $\dim_{k,f}(G)=1$.

($\Rightarrow$) Let $\dim_{k,f}(G)=1$. By Observation~\ref{obs_bounds}(c) and Theorem~\ref{characterization_mixed}(c), $\dim_{k,f}(G)\ge \dim_f(G)\ge 1$ and $\dim_f(G)=1$ if and only if $G=P_n$. So, if $G\neq P_n$, then $\dim_{k,f}(G)>1$. Now, suppose $G=P_n$, and let $P_n$ be a path given by $u_1, u_2, \ldots, u_n$. Let $g:V(P_n)\rightarrow [0,1]$ be any minimum $k$-truncated resolving function of $P_n$. If $n\le k+2$, then $\dim_{k,f}(P_n)=1$ as shown above. So, suppose $n\ge k+3$; we show that $\dim_{k,f}(P_n)>1$. 

First, let $k+3\le n \le 2k+3$. Then $R_k\{u_1, u_2\}=\cup_{i=1}^{k+2}\{u_i\}$, $R_k\{u_{n-1}, u_n\}=\cup_{i=n-(k+1)}^{n}\{u_i\}$, and $R_k\{u_{i-1}, u_{i+1}\}=V(P_n)-\{u_i\}$ for each $i\in\{n-(k+1), \ldots, k+2\}$. So, $g(R_k\{u_1, u_2\}) \ge \sum_{i=1}^{k+1} g(u_i)\ge1$, $g(R_k\{u_{n-1}, u_n\})\ge \sum_{i=n-(k+1)}^{n} g(u_i) \ge1$, and $g(R_k\{u_{i-1}, u_{i+1}\})=g(V(P_n))-g(u_i)\ge1$ for each $i\in\{n-(k+1), \ldots, k+2\}$. By summing over the $(6+2k-n)$ inequalities, we have $(5+2k-n)g(V(P_n)) \ge 6+2k-n$, i.e., $g(V(P_n)) \ge \frac{6+2k-n}{5+2k-n}$; thus, $\dim_{k,f}(P_n) \ge \frac{6+2k-n}{5+2k-n}>1$ (also see Theorem~\ref{frac_distK_path}(b)). 

Second, let $n\ge 2k+4$. Then $R_k\{u_1, u_2\}=\cup_{i=1}^{k+2}\{u_i\}$ and $R_k\{u_{n-1}, u_n\}=\cup_{i=n-k-1}^{n}\{u_i\}$. So, $g(R_k\{u_1, u_2\})=\sum_{i=1}^{k+2}g(u_i)\ge 1$ and  $g(R_k\{u_{n-1}, u_n\})=\sum_{i=n-k-1}^{n}g(u_i)\ge 1$. Since $n-k-1\ge2k+4-k-1= k+3$, $g(V(P_n)) \ge \sum_{i=1}^{k+2}g(u_i)+\sum_{i=n-k-1}^{n}g(u_i)\ge 2$, which implies $\dim_{k,f}(P_n)\ge 2$.~\hfill
\end{proof}

Next, via a proof technique used in~\cite{fracdim2}, we characterize connected graphs $G$ satisfying $\dim_{k,f}(G)=\frac{|V(G)|}{2}$ for any $k\in\mathbb{Z}^+$.

\begin{theorem}\label{frac_adim_n2}
Let $G$ be a connected graph of order $n \ge 2$. Then $\dim_{1,f}(G)=\frac{n}{2}$ if and only if $G \in H[\mathcal{K} \cup \overline{\mathcal{K}}]$ for some connected graph $H$.
\end{theorem}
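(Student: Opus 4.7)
The plan is to mimic the proof of Theorem~\ref{n_2}, adapted to the $k=1$ setting. The critical structural fact I would establish first is that, for $k=1$, distinct vertices $x,y$ satisfy $|R_1\{x,y\}|=2$ if and only if $x$ and $y$ are twins: since $d_1$ distinguishes only ``equal'', ``adjacent'' and ``at distance $\ge 2$'', a vertex $z\notin\{x,y\}$ lies outside $R_1\{x,y\}$ exactly when $z\in N(x)\Leftrightarrow z\in N(y)$, so $R_1\{x,y\}=\{x,y\}$ is equivalent to $N(x)-\{y\}=N(y)-\{x\}$.

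For the $(\Leftarrow)$ direction, each cell $H_i$ of the construction has size at least $2$ and consists of mutual twins in $G$. Pair each vertex $v$ with some $\phi(v)\neq v$ in its own cell (e.g., a cyclic shift inside each $H_i$); then $\phi$ is a bijection of $V(G)$ with $R_1\{v,\phi(v)\}=\{v,\phi(v)\}$. For any $1$-truncated resolving function $h$, Observation~\ref{obs_twin}(b) gives $h(v)+h(\phi(v))\ge 1$, and summing over $v\in V(G)$ yields $2h(V(G))\ge n$, so $\dim_{1,f}(G)\ge n/2$; the matching upper bound is Proposition~\ref{frac_kdim_bounds}.

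For the $(\Rightarrow)$ direction, suppose $\dim_{1,f}(G)=n/2$; I would first show that every vertex of $G$ has a twin by perturbing the uniform function $h\equiv \tfrac12$. If some $v\in V(G)$ had no twin, then by the structural fact $|R_1\{u,v\}|\ge 3$ for every $u\neq v$, and any pair $\{x,y\}$ with $v\in R_1\{x,y\}\setminus\{x,y\}$ also has $|R_1\{x,y\}|\ge 3$ because $x,y$ always lie in $R_1\{x,y\}$. Defining $h'(v)=0$ and $h'(w)=\tfrac12$ otherwise, one checks that $h'(R_1\{x,y\})=|R_1\{x,y\}|/2\ge 1$ when $v\notin R_1\{x,y\}$ and $h'(R_1\{x,y\})=(|R_1\{x,y\}|-1)/2\ge 1$ when $v\in R_1\{x,y\}$, while $h'(V(G))=(n-1)/2<n/2$, contradicting optimality.

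Once every vertex has a twin, the twin equivalence classes $C_1,\dots,C_m$ (each inducing a clique or an independent set by Hernando et al.) partition $V(G)$ into cells of size at least $2$. Letting $H$ be the graph on $\{C_1,\dots,C_m\}$ in which $C_iC_j\in E(H)$ whenever some (equivalently, every) vertex of $C_i$ is adjacent in $G$ to some vertex of $C_j$ — well defined by the twin property and connected because $G$ is — yields $G\in H[\mathcal{K}\cup\overline{\mathcal{K}}]$ by construction. I expect the only delicate step to be the perturbation argument, where one must verify that zeroing $h$ at $v$ does not violate any resolving inequality; the key fact enabling this is precisely the structural equivalence above, which forces every $R_1$-set containing a vertex outside $\{x,y\}$ to have cardinality at least $3$.
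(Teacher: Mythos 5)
Your proof is correct and follows essentially the same route as the paper: the heart of both arguments is the perturbation step in the forward direction, where a vertex with no twin (equivalently, a singleton twin class) is assigned value $0$ and all others $\tfrac12$, giving a $1$-truncated resolving function of weight $\tfrac{n-1}{2}$ because every set $R_1\{x,y\}$ containing that vertex has cardinality at least $3$. The only difference is cosmetic: for the $(\Leftarrow)$ direction the paper simply cites Theorem~\ref{n_2} together with $\dim_{1,f}(G)\ge\dim_f(G)$, whereas you rederive the lower bound directly from the twin inequality of Observation~\ref{obs_twin}(b); both are valid.
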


\begin{proof}
Let $G$ be a connected graph of order $n \ge 2$. 

($\Leftarrow$) Let $G \in H[\mathcal{K} \cup \overline{\mathcal{K}}]$ for some connected graph $H$. Then $\dim_{1,f}(G)\ge \dim_f(G)=\frac{n}{2}$ by Observation~\ref{obs_bounds}(d) and Theorem~\ref{n_2}. Since $\dim_{1,f}(G)\le \frac{n}{2}$ by Proposition~\ref{frac_kdim_bounds}, $\dim_{1,f}(G)=\frac{n}{2}$.

($\Rightarrow$) Let $\dim_{1,f}(G)=\frac{n}{2}$. It suffices to show that each twin equivalence class of $V(G)$ has cardinality at least two. Assume, to the contrary, that there exists a twin equivalence class $Q \subset V(G)$ consisting of exactly one element; let $z \in Q$. Let $h:V(G) \rightarrow [0,1]$ be a function defined by $h(z)=0$ and $h(v)=\frac{1}{2}$ for each $v \in V(G)-\{z\}$. Since $|R_1\{z,u\}|\ge3$ for any $u\in V(G)-\{z\}$, $h$ is a $1$-truncated resolving function of $G$ with $h(V(G))=\frac{n-1}{2}$, and hence $\dim_{1,f}(G)\le\frac{n-1}{2}$, a contradiction. So, each twin equivalence class of $V(G)$ must have cardinality at least two. By the connectedness of $G$, we conclude that $G \in H[\mathcal{K} \cup \overline{\mathcal{K}}]$ for some connected graph $H$.~\hfill
\end{proof}

Since $\dim_f(G) \le \dim_{k,f}(G) \le \dim_{1,f}(G)$ by Observation~\ref{obs_bounds}(d), Theorems~\ref{n_2} and~\ref{frac_adim_n2} imply the following

\begin{corollary}
Let $G$ be a connected graph of order $n \ge 2$, and let $k\in\mathbb{Z}^+$. Then $\dim_{k,f}(G)=\frac{n}{2}$ if and only if $G \in H[\mathcal{K} \cup \overline{\mathcal{K}}]$ for some connected graph $H$.
\end{corollary}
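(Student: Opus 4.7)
The plan is to observe that the corollary follows immediately from the sandwich inequality $\dim_f(G) \le \dim_{k,f}(G) \le \dim_{1,f}(G)$ given by Observation~\ref{obs_bounds}(d), together with Proposition~\ref{frac_kdim_bounds} (which provides the universal upper bound $\tfrac{n}{2}$), Theorem~\ref{n_2} (which characterizes equality for the unrestricted fractional metric dimension), and Theorem~\ref{frac_adim_n2} (which characterizes equality for the $k=1$ case). All the nontrivial content already lives in those earlier results; what remains is to squeeze $\dim_{k,f}(G)$ between two quantities that coincide exactly for graphs in $H[\mathcal{K}\cup\overline{\mathcal{K}}]$.

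For the backward implication, I would assume $G\in H[\mathcal{K}\cup\overline{\mathcal{K}}]$ and invoke Theorem~\ref{n_2} to conclude $\dim_f(G)=\tfrac{n}{2}$. Combining this with the lower bound $\dim_f(G)\le \dim_{k,f}(G)$ from Observation~\ref{obs_bounds}(d) and the upper bound $\dim_{k,f}(G)\le\tfrac{n}{2}$ from Proposition~\ref{frac_kdim_bounds} forces $\dim_{k,f}(G)=\tfrac{n}{2}$, regardless of $k$.

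For the forward implication, I would assume $\dim_{k,f}(G)=\tfrac{n}{2}$. Using the chain $\dim_{k,f}(G)\le \dim_{1,f}(G)\le \tfrac{n}{2}$ from Observation~\ref{obs_bounds}(d) and Proposition~\ref{frac_kdim_bounds}, the hypothesis pinches $\dim_{1,f}(G)$ to exactly $\tfrac{n}{2}$. Theorem~\ref{frac_adim_n2} then yields $G\in H[\mathcal{K}\cup\overline{\mathcal{K}}]$ for some connected graph $H$.

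Since the argument is purely a two-sided sandwich built from previously stated theorems, there is no genuine obstacle; the only care required is to make explicit which direction of each chain is being used. It may even be cleaner to remark that the corollary holds uniformly in $k$ precisely because both endpoints of the sandwich admit the same characterization, and then conclude by noting that the extremal family is therefore independent of $k$.
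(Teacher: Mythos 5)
Your proposal is correct and is exactly the paper's argument: the author derives the corollary in one line from the sandwich $\dim_f(G)\le\dim_{k,f}(G)\le\dim_{1,f}(G)$ of Observation~\ref{obs_bounds}(d) together with Theorems~\ref{n_2} and~\ref{frac_adim_n2}, just as you do. Your write-up merely makes explicit which end of the sandwich each direction uses, which matches the intended reasoning.
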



\section{$\dim_{k,f}(G)$ of some graph classes}\label{Sec_graphs}

In this section, we examine $\dim_{k,f}(G)$ for some classes of graphs.

We recall some terminology. The \emph{join} of two graphs $G$ and $H$, denoted by $G+H$, is the graph obtained from the disjoint union of $G$ and $H$ by joining an edge between each vertex of $G$ and each vertex of $H$. 

The \emph{degree} of a vertex $v\in V(G)$ is $|N(v)|$; a \emph{leaf} is a vertex of degree one and a \emph{major vertex} is a vertex of degree at least three. Now, fix a tree $T$. A leaf $\ell$ is called a \emph{terminal vertex} of a major vertex $v$ if $d(\ell, v)<d (\ell, w)$ for every other major vertex $w$ in $T$. The \emph{terminal degree}, $ter(v)$, of a major vertex $v$ is the number of terminal vertices of $v$ in $T$, and an \emph{exterior major vertex} is a major vertex that has positive terminal degree. Let $M(T)$ be the set of exterior major vertices of $T$ and let $L(T)$ be the set of leaves of $T$. Let $M_1(T)=\{w \in M(T) : ter(w) =1\}$, $M_2(T)=\{w\in M(T): ter(w) \ge 2\}$; then $M(T)=M_1(T) \cup M_2(T)$. Let $\sigma(T)=|L(T)|$, $ex(T)=|M(T)|$ and $ex_1(T)=|M_1(T)|$. 

 Now, we recall the fractional metric dimension of some classes of graphs. 

\begin{theorem}\label{thm_frac}
\begin{itemize}
\item[(a)] \emph{\cite{fracdim_yi}} For any tree $T$, $\dim_f(T)=\frac{1}{2}(\sigma(T)-ex_1(T))$.
\item[(b)] \emph{\cite{fracdim1}} For the Petersen graph $\mathcal{P}$, $\dim_f(\mathcal{P})=\frac{5}{3}$.
\item[(c)] \emph{\cite{fracdim1}} For $n\ge3$,
$\dim_f(C_n)=\left\{
\begin{array}{ll}
\frac{n}{n-1} & \mbox{if $n$ is odd},\\
\frac{n}{n-2} & \mbox{if $n$ is even}.
\end{array}\right.$
\item[(d)] \emph{\cite{fracdim1}} For the wheel graph $W_n=C_{n-1}+K_1$ of order $n \ge 5$,
$\dim_f(W_n)=\left\{
\begin{array}{ll}
2 & \mbox{if } n=5,\\
\frac{3}{2} & \mbox{if } n=6,\\
\frac{n-1}{4} & \mbox{if } n \ge 7.
\end{array}\right.$
\item[(e)] \emph{\cite{fracdim_yi}} For $m \ge 2$, let $G=K_{a_1,a_2, \ldots, a_m}$ be a complete $m$-partite graph of order $n=\sum_{i=1}^{m}a_i$, and let $s$ be the number of partite sets of $G$ consisting of exactly one element. Then
\begin{equation*}
\dim_f(G)=\left\{
\begin{array}{ll}
\frac{n-1}{2} & \mbox{if }s=1,\\
\frac{n}{2} & \mbox{otherwise}.
\end{array}\right.
\end{equation*}
\item[(f)]  \emph{\cite{fracdim1}} For the grid graph $G=P_s \times P_t$ ($s,t \ge 2$), $\dim_f(G)=2$.
\end{itemize}
\end{theorem}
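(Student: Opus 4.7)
Theorem~\ref{thm_frac} is a compilation of previously known results, each attributed to a separate source, so the proposal here is less about discovering the statements than about sketching the unifying arguments that one would use if reproving them from scratch. The overarching plan is to attack parts (b)--(f) with symmetry arguments whenever possible, to attack (a) via a twin-leaf analysis, and to use case checking for the remaining structural families. The hard part will be locating, in each graph class, the pair $\{x,y\}$ that minimizes $|R\{x,y\}|$ so that the LP lower bound can be matched by an explicit resolving function.

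For (a), I would first observe that if $v$ is an exterior major vertex of $T$ with terminal leaves $\ell_1,\ldots,\ell_t$, $t\ge2$, then $R\{\ell_i,\ell_j\}=\{\ell_i,\ell_j\}$, so any resolving function must satisfy $g(\ell_i)+g(\ell_j)\ge1$. Summing over the $t$ leaves of such a $v$ and minimizing yields a contribution of $t/2$; aggregated over all $v\in M_2(T)$ this gives the lower bound $\tfrac12(\sigma(T)-ex_1(T))$. For the matching upper bound, I would define $g$ by $g(\ell)=\tfrac12$ for every leaf $\ell$ whose exterior major vertex lies in $M_2(T)$ and $g=0$ elsewhere, and then verify that for every other pair $\{x,y\}$ the set $R\{x,y\}$ contains at least two such leaves. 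The key lemma needed is that any two non-twin vertices of a tree are separated by two leaves of $M_2$-type, which follows from walking outward from $x$ and $y$ towards distinct terminal branches.

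For (b) and (c), I would use vertex-transitivity of the Petersen graph and of $C_n$ to argue that an optimal $g$ may be taken to be constant, so that $\dim_f(G)=|V(G)|/\min_{x\ne y}|R\{x,y\}|$. Then the task reduces to computing this minimum: for $C_n$ with $n$ odd, each arc between $x$ and $y$ has opposite parity, producing exactly one equidistant vertex and $|R|=n-1$; for $n$ even, pairs at even distance have two equidistant vertices giving $|R|=n-2$, which is the minimum; for the Petersen graph, a direct (case-checked) calculation shows the minimum $|R\{x,y\}|$ equals $6$, yielding $10/6=5/3$. For (d), the wheel lacks full vertex-transitivity because of the hub, so I would split into the three ranges $n=5,6,n\ge7$ and, in each, identify the tight constraints by enumerating the $R\{x,y\}$ classes on the rim and between rim and hub.

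For (e), the structural ingredient is Observation~\ref{obs_twin}(a): vertices in the same part form a twin class, so $g$-weights within each part of size $\ge2$ sum to at least half its size, giving $n/2$ or $(n-1)/2$ according to whether a singleton part exists. Matching upper bounds come from $g\equiv\tfrac12$ (all parts large) or $g\equiv\tfrac12$ off the unique singleton $v$ with $g(v)=0$, and the only verification needed is that the singleton's resolving constraints remain satisfied, which follows because $|R_1\{v,y\}|\ge3$ whenever every other part has size $\ge2$. For (f), on the grid $P_s\times P_t$ with $s,t\ge2$, I would exhibit a resolving function supported on the four corners (each with weight depending on $s,t$) giving $g(V(G))=2$, and prove the lower bound by locating two pairs whose $R$-sets intersect only in those corners, forcing total weight at least $2$. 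Across all six parts, the recurring bottleneck is the explicit identification of the extremal pairs $\{x,y\}$; once these are pinned down, both bounds fall into place.
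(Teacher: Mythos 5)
The paper does not prove Theorem~\ref{thm_frac} at all: each part is quoted with a citation to \cite{fracdim_yi} or \cite{fracdim1}, so there is no in-paper proof to match. Judged on its own merits, your overall strategy (symmetrization for vertex-transitive graphs in (b)--(c), twin classes in (e), explicit tight LP constraints elsewhere) is the right one, but two of your sketches contain concrete errors.

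The serious one is in (a). Your lower bound rests on the claim that for two terminal vertices $\ell_i,\ell_j$ of the same exterior major vertex $v$ one has $R\{\ell_i,\ell_j\}=\{\ell_i,\ell_j\}$, hence $g(\ell_i)+g(\ell_j)\ge 1$. This is only true when both leaves are adjacent to $v$ (i.e., are twins). If the legs have length at least $2$, every internal vertex of both legs also lies in $R\{\ell_i,\ell_j\}$: for the spider with center $v$ and legs $v\!-\!a\!-\!\ell_1$, $v\!-\!b\!-\!\ell_2$, $v\!-\!c\!-\!\ell_3$, the function placing $\tfrac12$ on $a,b,c$ and $0$ on all leaves is a valid resolving function, so $g(\ell_i)+g(\ell_j)\ge1$ simply fails. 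The correct constraint, which the paper itself derives in Lemma~\ref{obs_tree1}, is $R\{s_{i,x},s_{i,y}\}=V(P^{i,x})\cup V(P^{i,y})$ for the two neighbors of $v$ on distinct legs, i.e., the inequality applies to the total weight of each leg, not to the leaf endpoints; summing those constraints still yields $\tfrac{1}{2}\sum_{v\in M_2(T)}ter(v)=\tfrac12(\sigma(T)-ex_1(T))$, so the bound survives but your derivation of it does not. A smaller problem is in (f): two pairs whose $R$-sets ``intersect only in the corners'' do not force total weight $2$, since $g(R_1)+g(R_2)\ge 2$ only bounds $g(R_1\cup R_2)$ from below by $2-g(R_1\cap R_2)$. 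You need two pairs with \emph{disjoint} $R$-sets, and these exist: $R\{(1,1),(2,2)\}=\{(1,1)\}\cup\{z:z_1,z_2\ge2\}$ and $R\{(1,2),(2,1)\}$, which is the union of the first row and first column minus the corner $(1,1)$, are disjoint, giving $\dim_f\ge2$. Parts (b), (c), (e) are essentially correct as sketched, and (d) is only an outline but the case split is the standard route.
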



\subsection{Graphs $G$ with $\diam(G)\le2$}

If $G$ is a graph with $\diam(G)\le 2$, then $\dim_{k,f}(G)=\dim_f(G)$ for any $k\in\mathbb{Z}^+$. So, for any $k\in\mathbb{Z}^+$, we have $\dim_{k,f}(C_n+K_1)=\dim_f(C_n+K_1)$, $\dim_{k,f}(P_n+K_1)=\dim_f(P_n+K_1)$, $\dim_{k,f}(\mathcal{P})=\dim_f(\mathcal{P})$ for the Petersen graph $\mathcal{P}$, and $\dim_{k,f}(G)=\dim_f(G)$ for any complete multipartite graph $G$, for examples. So, Theorem~\ref{thm_frac}(b)(d)(e) implies the following.

\begin{corollary}
Let $k\in\mathbb{Z}^+$.
\begin{itemize}
\item[(a)] For the Petersen graph $\mathcal{P}$, $\dim_{k,f}(\mathcal{P})=\frac{5}{3}$.
\item[(b)]For $n \ge 3$, 
\begin{equation*}
\dim_{k,f}(C_n+K_1)=\left\{
\begin{array}{ll}
2 & \mbox{ if } n \in \{3,4\},\\
\frac{3}{2} & \mbox{ if }n=5,\\
\frac{n}{4} & \mbox{ if }n\ge 6. 
\end{array}\right.
\end{equation*}
\item[(c)] For $m \ge 2$, let $G=K_{a_1, a_2, \ldots, a_m}$ be a complete $m$-partite graph of order $n=\sum_{i=1}^{m}a_i$. Let $s$ be the number of partite sets of $G$ consisting of exactly one element. Then
\begin{equation*}
\dim_{k,f}(G)=\left\{
\begin{array}{ll}
\frac{n-1}{2} & \mbox{if }s=1,\\
\frac{n}{2} & \mbox{otherwise}.
\end{array}\right.
\end{equation*}
\end{itemize}
\end{corollary}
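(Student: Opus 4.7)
The plan is to exploit the diameter-at-most-two reduction highlighted in the sentence immediately preceding the corollary: for every graph $G$ with $\diam(G) \le 2$ and every $k \in \mathbb{Z}^+$, one has $d_k(x,y) = d(x,y)$ for all pairs, so $R_k\{x,y\} = R\{x,y\}$ and Observation~\ref{obs_diam}(b) yields $\dim_{k,f}(G) = \dim_f(G)$. Once this reduction is in place, each of the three formulas in the corollary is merely a transcription of a value furnished by Theorem~\ref{thm_frac}.

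First I would record that the Petersen graph has diameter $2$, that every complete multipartite graph on at least two parts has diameter at most $2$, and that $C_n + K_1$ has diameter at most $2$ for every $n \ge 3$ (the apex vertex of $K_1$ is universal, so any two vertices of the $C_n$ are joined by a path of length $2$ through it). Hence in all three cases the reduction $\dim_{k,f}(G) = \dim_f(G)$ applies for every $k \in \mathbb{Z}^+$.

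Next I would read off the values. For (a), Theorem~\ref{thm_frac}(b) gives $\dim_f(\mathcal{P}) = 5/3$. For (c), Theorem~\ref{thm_frac}(e) gives the claimed formula verbatim. For (b), I would use the identification $C_n + K_1 \cong W_{n+1}$ in the paper's wheel convention $W_m = C_{m-1} + K_1$: substituting $m = n+1$ into Theorem~\ref{thm_frac}(d) recovers $\dim_f(W_5) = 2$ for $n=4$, $\dim_f(W_6) = 3/2$ for $n=5$, and $\dim_f(W_{n+1}) = ((n+1)-1)/4 = n/4$ for $n \ge 6$. The one edge case is $n = 3$, where $C_3 + K_1 \cong K_4$ falls outside the range $m \ge 5$ of the wheel formula; here I would instead invoke Theorem~\ref{thm_frac}(e) on $K_4$ (all four partite classes are singletons, so $s = 4 \neq 1$) to conclude $\dim_f(K_4) = 4/2 = 2$, in agreement with the stated value.

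There is no genuine obstacle here; the entire argument is bookkeeping on top of the diameter-$2$ reduction and the already-established values of $\dim_f$. The mildest subtlety worth flagging is the notational shift between $C_n + K_1$ and $W_{n+1}$, together with the fact that $n = 3$ in part (b) must be handled through the complete-multipartite formula rather than the wheel formula.
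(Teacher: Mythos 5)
Your proposal is correct and matches the paper's own argument: the paper likewise derives the corollary from the observation that $\diam(G)\le 2$ forces $\dim_{k,f}(G)=\dim_f(G)$ (Observation~\ref{obs_diam}(b)) and then reads off the values from Theorem~\ref{thm_frac}(b)(d)(e). Your explicit handling of the $n=3$ case of part (b) via $C_3+K_1\cong K_4$ and the complete-multipartite formula is a detail the paper leaves implicit, but it is the intended reading.
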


Next, we examine $\dim_{k,f}(P_n+K_1)$.

\begin{theorem}\label{fdim_fan}
Let $k\in\mathbb{Z}^+$. 
\begin{itemize}
\item[(a)] For $n \ge 1$, 
\begin{equation*}
\dim_{k,f}(P_n+K_1)=\dim_f(P_n+K_1)=\left\{
\begin{array}{ll}
\frac{n+1}{2} & \mbox{ if } n \in\{1,2,3\},\\
\frac{5}{3} & \mbox{ if } n \in \{4,5\},\\
\frac{n+1}{4} & \mbox{ if } n \ge 6 \mbox{ and } n\equiv 1,3\pmod4 ,\\
\frac{n+2}{4} & \mbox{ if } n \ge 6 \mbox{ and } n\equiv 2\pmod4.
\end{array}\right.
\end{equation*}
\item[(b)] If $n\ge 8$ and $n\equiv 0 \pmod4$, then $\frac{n}{4} \le \dim_{k,f}(P_n+K_1)=\dim_f(P_n+K_1)\le \frac{n+2}{4}$.
\end{itemize}
\end{theorem}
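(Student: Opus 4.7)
The plan is to reduce both parts to computing $\dim_f(F_n)$ with $F_n := P_n + K_1$, then attack the fractional LP directly. Since $d(u,v_i)=1$ and every pair of path vertices is at distance at most $2$ through $u$, one has $\diam(F_n)\le 2$ for all $n\ge 1$, so Observation~\ref{obs_diam}(b) gives $\dim_{k,f}(F_n)=\dim_f(F_n)$ for every $k\in\mathbb{Z}^+$. Writing $V(P_n)=\{v_1,\dots,v_n\}$ with $u$ the cone vertex, a short case analysis on the distances (each is $0$, $1$, or $2$) yields the resolving sets: $R\{u,v_i\}=V(F_n)\setminus\{v_{i-1},v_{i+1}\}$; the $4$-windows $R\{v_i,v_{i+1}\}=\{v_{i-1},v_i,v_{i+1},v_{i+2}\}\cap V(F_n)$; the gap sets $R\{v_i,v_{i+2}\}=\{v_{i-1},v_i,v_{i+2},v_{i+3}\}\cap V(F_n)$; and $R\{v_i,v_j\}=\{v_{i-1},v_i,v_{i+1},v_{j-1},v_j,v_{j+1}\}\cap V(F_n)$ for $j-i\ge 3$. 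The tightest constraints are the boundary $3$-sets $\{v_1,v_2,v_3\}$, $\{v_{n-2},v_{n-1},v_n\}$ and the interior $4$-windows.

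For the upper bounds, when $n\in\{1,2,3\}$ I would use twin pairs (for example, $v_1,v_3$ are twins in $F_3$ and $R\{u,v_2\}=\{u,v_2\}$, giving $\dim_f(F_3)\ge 2$ by Observation~\ref{obs_twin}(a)); for $n\in\{4,5\}$ the function $g(u)=0$, $g(v_i)=1/3$ is easily verified to be resolving with sum $5/3$. For $n\ge 6$ with $n$ odd, take $g(u)=0$, $g(v_i)=1/2$ for odd $i$, and $g(v_i)=0$ for even $i$; each $4$-window and each boundary $3$-set contains exactly two odd indices summing to $1$, so this gives $\dim_f(F_n)\le (n+1)/4$. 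For $n\ge 6$ with $n$ even, set additionally $g(v_n)=1/2$ to cover the rightmost boundary, yielding $\dim_f(F_n)\le (n+2)/4$; this produces the upper bound in (b) and the $n\equiv 2\pmod{4}$ case of (a).

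For the lower bounds I would sum carefully chosen pair-constraint inequalities. For $n=4t$, the sets $R\{v_{4i+2},v_{4i+3}\}$ ($i=0,\dots,t-1$) disjointly cover $V(P_n)$, giving $\dim_f(F_n)\ge n/4$, which is the lower bound in (b). For $n=4t+2$, appending the two boundary $3$-sets to $t-1$ disjoint middle $4$-windows yields $(n+2)/4$; for $n=4t+3$, two boundaries plus $t-1$ disjoint $4$-windows yield $(n+1)/4$. The delicate case is $n=4t+1$, where every disjoint packing is short by one vertex. Here I would pack $t-2$ disjoint middle $4$-windows plus the two boundary sets, leaving a residual $3$-vertex block $\{v_{n-5},v_{n-4},v_{n-3}\}$ of $g$-mass $N$ adjacent to the right boundary (for $t=2$ this residual is the whole middle). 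Using $B_1=g(R\{v_1,v_2\})\ge 1$ and $B_2=g(R\{v_{n-1},v_n\})\ge 1$ gives one bound $\dim_f(F_n)\ge t+N$, while combining the window forcing $g(v_{n-2})\ge 1-N$ from $R\{v_{n-4},v_{n-3}\}$ with the gap constraint $R\{v_{n-3},v_{n-1}\}$ yields $B_2\ge 2-2N$ and hence the opposite-monotone bound $\dim_f(F_n)\ge t+1-N$. The maximum of these two bounds, minimized over $N\ge 0$, equals $t+1/2=(n+1)/4$ at $N=1/2$, completing~(a). The main obstacle is setting up this two-bound balancing uniformly for all $t\ge 2$; the $n\equiv 0\pmod{4}$ case of (b) is presumably left with a gap between $n/4$ and $(n+2)/4$ because no analogous balancing improves the disjoint-partition bound.
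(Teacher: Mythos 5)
Your overall strategy (reduce to $\dim_f$ via $\diam(P_n+K_1)\le 2$ and Observation~\ref{obs_diam}(b), compute the sets $R\{x,y\}$ explicitly, sum pair constraints for lower bounds, and exhibit explicit weight functions for upper bounds) is the same as the paper's, and your handling of the residue classes for $n\ge 6$ is sound. In particular, your balancing argument for $n\equiv 1\pmod 4$ --- playing the bound $t+N$ against $t+1-N$, where $N$ is the mass of the residual $3$-block --- is a genuinely different and valid replacement for the paper's uniform double-counting, in which the two boundary triples together with the sets $R\{v_{i-1},v_{i+1}\}$ for odd $i$ cover each path vertex at most twice and give $2g(V)\ge x+1$ for all odd $n=2x+1$ at once; the paper's version is more uniform, yours isolates exactly where the half-integrality comes from.

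The genuine gap is the case $n\in\{4,5\}$, one of the four cases of the statement. First, your upper-bound function for $n=4$ fails: with $g(u)=0$ and $g(v_i)=\tfrac13$ the total is $\tfrac43$, not $\tfrac53$, and the function is not even resolving, since $R\{u,v_2\}=\{u,v_2,v_4\}$ receives mass $\tfrac23<1$; one must also set $g(u)=\tfrac13$ (as the paper does), giving total $\tfrac53$. Second, you provide no lower bound $\dim_f\ge\tfrac53$ for $n=4$ or $n=5$: your disjoint-packing schemes yield only $1$ for $n=4$ (a single window), and your $n=4t+1$ balancing needs $t\ge 2$, so $n=5$ is uncovered. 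These small cases require the constraints involving the cone vertex: for $n=4$ one sums the four constraints $g(V)-g(u)-g(v_j)\ge 1$ (from $R\{v_1,v_2\}$, $R\{v_1,v_3\}$, $R\{v_2,v_4\}$, $R\{v_3,v_4\}$) with three copies each of $g(V)-g(v_1)-g(v_3)\ge 1$ and $g(V)-g(v_2)-g(v_4)\ge 1$ to get $6g(V)\ge 10$; for $n=5$ one sums the five constraints from $R\{v_1,v_2\}$, $R\{v_4,v_5\}$ and $R\{u,v_j\}$ for $j\in\{2,3,4\}$ to get $3g(V)\ge 5$. The rest of your proposal is correct or routinely checkable.
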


\begin{proof}
Let $k\in\mathbb{Z}^+$ and $n\ge1$. 
Since $\diam(P_n+K_1) \le 2$, $\dim_{k,f}(P_n+K_1)=\dim_f(P_n+K_1)$ by Observation~\ref{obs_diam}(b). Let $G=P_n+K_1$ such that $P_n$ is given by $u_1, u_2, \ldots, u_n$ and $w$ is the vertex in $K_1$. Let $g: V(G) \rightarrow [0,1]$ be any minimum ($k$-truncated) resolving function of $G$. 

First, let $n\in\{1,2\}$; then $P_n+K_1=K_{n+1}$ and $\dim_{k,f}(P_n+K_1)=\dim_f(P_n+K_1)=\frac{n+1}{2}$ by Theorem~\ref{thm_frac}(e). Second, let $n=3$. Since $u_1$ and $u_3$ are twins and $u_2$ and $w$ are twins in $G$, $g(u_1)+g(u_3) \ge 1$ and $g(u_2)+g(w) \ge 1$ by Observation~\ref{obs_twin}(a)(b); thus $g(V(G))\ge 2$, and hence $\dim_{k,f}(G)=\dim_f(G) \ge 2$. By Theorem~\ref{characterization_mixed}(b), $\dim_{k,f}(G)=\dim_f(G)=2$. 

Third, let $n=4$. Then $R_k\{u_1, u_2\}=R\{u_1, u_2\}=V(G)-\{w,u_4\}$, $R_k\{u_1, u_3\}=R\{u_1, u_3\}=V(G)-\{w, u_2\}$, $R_k\{u_2, u_4\}=R\{u_2, u_4\}=V(G)-\{w,u_3\}$, $R_k\{u_3,u_4\}=R\{u_3,u_4\}=V(G)-\{w,u_1\}$, $R_k\{u_2,w\}=R\{u_2,w\}=V(G)-\{u_1,u_3\}$, and $R_k\{u_3,w\}=R\{u_3,w\}=V(G)-\{u_2,u_4\}$. So, $g(V(G))-g(w)-g(u_4)\ge 1$, $g(V(G))-g(w)-g(u_2)\ge 1$, $g(V(G))-g(w)-g(u_3)\ge 1$, $g(V(G))-g(w)-g(u_1)\ge 1$, $3g(V(G))-3g(u_1)-3g(u_3)\ge 3$ and $3g(V(G))-3g(u_2)-3g(u_4)\ge 3$. By summing over the six inequalities, we have $6g(V(G))\ge 10$; thus, $\dim_{k,f}(G)=\dim_f(G) \ge \frac{5}{3}$. On the other hand, if $h: V(G) \rightarrow [0,1]$ is a function defined by $h(w)=h(u_i)=\frac{1}{3}$ for each $i\in\{1,2,3,4\}$, then $h$ is a ($k$-truncated) resolving function of $G$ with $h(V(G))=\frac{5}{3}$; thus $\dim_{k,f}(G)=\dim_f(G) \le \frac{5}{3}$. Therefore, $\dim_{k,f}(G)=\dim_f(G)=\frac{5}{3}$. 

Fourth, let $n=5$. Then $R_k\{u_1, u_2\}=R\{u_1, u_2\}=V(G)-\{w, u_4, u_5\}$, $R_k\{u_4, u_5\}=R\{u_4, u_5\}=V(G)-\{w, u_1, u_2\}$, $R_k\{u_2, w\}=R\{u_2, w\}=V(G)-\{u_1,u_3\}$, $R_k\{u_3, w\}=R\{u_3, w\}=V(G)-\{u_2, u_4\}$ and $R_k\{u_4, w\}=R\{u_4, w\}=V(G)-\{u_3, u_5\}$. So, $g(V(G))-g(w)-g(u_4)-g(u_5)\ge 1$, $g(V(G))-g(w)-g(u_1)-g(u_2)\ge 1$, $g(V(G))-g(u_1)-g(u_3)\ge 1$, $g(V(G))-g(u_2)-g(u_4)\ge 1$ and $g(V(G))-g(u_3)-g(u_5)\ge 1$. By summing over the five inequalities, we have $3g(V(G)) \ge 5$; thus, $\dim_{k,f}(G)=\dim_f(G) \ge \frac{5}{3}$. On the other hand, if $h: V(G) \rightarrow [0,1]$ is a function defined by $h(w)=0$ and $h(u_i)=\frac{1}{3}$ for each $i\in\{1,2,3,4,5\}$, then $h$ is a ($k$-truncated) resolving function of $G$ with $h(V(G))=\frac{5}{3}$; thus $\dim_{k,f}(G)=\dim_f(G) \le \frac{5}{3}$. Therefore, $\dim_{k,f}(G)=\dim_f(G)=\frac{5}{3}$. 

Next, let $n \ge 6$, and we consider the following cases.

\emph{Case 1: $n\equiv 1,3 \pmod4$.} In this case, $n$ is odd; let $n=2x+1$, where $x \ge 3$. Note that $R_k\{u_1, u_2\}=R\{u_1, u_2\}=\{u_1,u_2,u_3\}$, $R_k\{u_{2x}, u_{2x+1}\}=R\{u_{2x}, u_{2x+1}\}=\{u_{2x-1},u_{2x},u_{2x+1}\}$, and $R_k\{u_{i-1}, u_{i+1}\}=R\{u_{i-1}, u_{i+1}\}=\{u_{i-2}, u_{i-1}, u_{i+1}, u_{i+2}\}$ for each odd $i\in\{3,5,\ldots, 2x-1\}$. So, $g(u_1)+g(u_2)+g(u_3) \ge 1$, $g(u_{2x-1})+g(u_{2x})+g(u_{2x+1}) \ge 1$, and $g(u_{i-2})+g(u_{i-1})+g(u_{i+1})+g(u_{i+2}) \ge 1$ for each odd $i\in\{3,5,\ldots, 2x-1\}$. By summing over the $x+1$ inequalities, we have $2g(V(G))\ge 2\sum_{i=1}^{2x+1}g(u_i)\ge x+1$, and thus $\dim_{k,f}(G)=\dim_f(G) \ge \frac{x+1}{2}=\frac{n+1}{4}$. Now, let $h:V(G) \rightarrow[0,1]$ be a function defined by $h(u_i)=\frac{1}{2}$ if $i$ is odd, and $h(w)=h(u_i)=0$ if $i$ is even. Then $h$ is a ($k$-truncated) resolving function of $G$ with $h(V(G))=\frac{x+1}{2}$ since $|R\{u_i,u_j\} \cap (\cup_{\alpha=1}^{x+1}\{u_{2\alpha-1}\})| \ge 2$ and $|R\{u_i,w\} \cap (\cup_{\alpha=1}^{x+1}\{u_{2\alpha-1}\})| \ge 2$. So, $\dim_{k,f}(G)=\dim_f(G) \le \frac{x+1}{2}=\frac{n+1}{4}$. Thus, $\dim_{k,f}(G)=\dim_f(G) = \frac{n+1}{4}$ for $n\equiv 1,3 \pmod4$.

\emph{Case 2: $n\equiv 2 \pmod4$.} Let $n=4x+2$, where $x\ge 1$. Since $R_k\{u_1, u_2\}=R\{u_1, u_2\}=\{u_1, u_2, u_3\}$, $R_k\{u_{4x+1}, u_{4x+2}\}=R\{u_{4x+1}, u_{4x+2}\}=\{u_{4x}, u_{4x+1}, u_{4x+2}\}$, and $R_k\{u_{4i+1}, u_{4i+2}\}=R\{u_{4i+1}, u_{4i+2}\}=\{u_{4i}, u_{4i+1}, u_{4i+2}, u_{4i+3}\}$ for each $i\in\{1, 2, \ldots, x-1\}$, we have $g(u_1)+g(u_2)+g(u_3) \ge 1$, $g(u_{4x})+g(u_{4x+1})+g(u_{4x+2}) \ge 1$, and $g(u_{4i})+g(u_{4i+1})+g(u_{4i+2})+g(u_{4i+3})\ge 1$ for each $i\in\{1, 2, \ldots, x-1\}$. By summing over the $x+1$ inequalities, we have $g(V(G)) \ge \sum_{i=1}^{4x+2}g(u_i) \ge x+1=\frac{n+2}{4}$; thus, $\dim_{k,f}(G)=\dim_f(G) \ge \frac{n+2}{4}$. If we let $h:V(G) \rightarrow [0,1]$ be a function defined by $h(w)=0$, $h(u_1)=h(u_{4x+2})=\frac{1}{2}$, and $h(u_j)=\frac{1}{4}$ for each $j \in \{2,3, \ldots, 4x+1\}$, then $h$ is a ($k$-truncated) resolving function of $G$ with $h(V(G))=\frac{n+2}{4}$; thus $\dim_{k,f}(G)=\dim_f(G) \le \frac{n+2}{4}$. Therefore, $\dim_{k,f}(G)=\dim_f(G)=\frac{n+2}{4}$ for $n\equiv 2 \pmod4$.

\emph{Case 3: $n\equiv 0 \pmod4$.} Let $n=4x$, where $x \ge 2$. For each $i\in\{0,1,\ldots, x-1\}$, $R_k\{u_{4i+2}, u_{4i+3}\}=R\{u_{4i+2}, u_{4i+3}\}=\{u_{4i+1}, u_{4i+2}, u_{4i+3}, u_{4i+4}\}$ and $g(u_{4i+1})+g(u_{4i+2})+g(u_{4i+3})+g(u_{4i+4})\ge 1$. By summing over the $x$ inequalities, we have $g(V(G))\ge \sum_{i=1}^{4x}g(u_i)\ge x$; thus, $\dim_{k,f}(G)=\dim_f(G)\ge \frac{n}{4}$. If we let $h:V(G)\rightarrow [0,1]$ be a function defined by $h(w)=0$, $h(u_1)=\frac{1}{2}=h(u_{4x})$, and $h(u_i)=\frac{1}{4}$ for each $i\in\{2,3,\ldots, 4x-1\}$, then $h$ is a ($k$-truncated) resolving function of $G$ with $g(V(G))=\frac{n+2}{4}$; thus, $\dim_{k,f}(G)=\dim_f(G)\le \frac{n+2}{4}$. So, $\frac{n}{4} \le \dim_{k,f}(G)=\dim_f(G)\le \frac{n+2}{4}$ for $n\equiv 0 \pmod4$.~\hfill
\end{proof}


\subsection{Cycles}

We determine $\dim_{k,f}(C_n)$ for any $k\in\mathbb{Z}^+$ and for $n\ge3$.

\begin{theorem}\label{fkdim_cycle}
For any $k\in\mathbb{Z}^+$ and for $n\ge 3$, 
\begin{equation*}
\dim_{k,f}(C_n)=\left\{
\begin{array}{ll}
\frac{n}{n-1} & \mbox{ if } n \le 2k+3 \mbox{ and $n$ is odd},\\
\frac{n}{n-2} & \mbox{ if } n\le 2k+3 \mbox{ and $n$ is even},\\
\frac{n}{2(k+1)} & \mbox{ if } n\ge 2k+4.
\end{array}\right.
\end{equation*}
\end{theorem}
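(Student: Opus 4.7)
The plan is to split on the size of $n$. For $n\le 2k+3$, one has $\diam(C_n)=\lfloor n/2\rfloor\le k+1$, so Observation~\ref{obs_diam}(b) yields $\dim_{k,f}(C_n)=\dim_f(C_n)$, and Theorem~\ref{thm_frac}(c) supplies the two claimed values $\tfrac{n}{n-1}$ (odd $n$) and $\tfrac{n}{n-2}$ (even $n$).

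For $n\ge 2k+4$, I would label $V(C_n)=\{v_0,v_1,\ldots,v_{n-1}\}$ cyclically and first verify that $R_k\{v_i,v_{i+1}\}=\{v_{i-k},v_{i-k+1},\ldots,v_{i+k+1}\}$ (indices mod $n$) for every $i$: the arcs $N_k[v_i]$ and $N_k[v_{i+1}]$ are contiguous of length $2k+1$, their union is the displayed arc of $2k+2$ distinct vertices (as $n\ge 2k+2$), and no vertex in their intersection can be equidistant from $v_i$ and $v_{i+1}$ since that would force $|m-i|=|m-i-1|$ for an integer $m$. In particular $|R_k\{v_i,v_{i+1}\}|=2(k+1)$. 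The lower bound then follows by summing the constraint $g(R_k\{v_i,v_{i+1}\})\ge 1$ over all $i\in\{0,1,\ldots,n-1\}$: each $v_j$ appears in exactly $2(k+1)$ of these arcs, so $2(k+1)\,g(V(C_n))\ge n$, giving $\dim_{k,f}(C_n)\ge \tfrac{n}{2(k+1)}$.

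For the matching upper bound I would take the uniform function $h(v)=\tfrac{1}{2(k+1)}$, of total weight $\tfrac{n}{2(k+1)}$, and check that it is a $k$-truncated resolving function by proving $|R_k\{v_0,v_t\}|\ge 2(k+1)$ for every $t\in\{1,\ldots,\lfloor n/2\rfloor\}$. Write $|R_k\{v_0,v_t\}|=4k+2-\beta-\alpha$ (via Observation~\ref{obs_twin}(c)), where $\beta=|N_k[v_0]\cap N_k[v_t]|=\max(0,2k+1-t)+\max(0,2k+1-(n-t))$ is the sum of the short-side and long-side overlaps of the two $N_k$-arcs (disjoint on the cycle), and $\alpha\le 2$ counts the midpoints of the two arcs between $v_0$ and $v_t$ that are genuine vertices lying in $N_k[v_0]$. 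A case check on whether the arcs overlap on the short side only ($t\le 2k$, $t<n-2k$), on both sides ($t\le 2k$ and $t\ge n-2k$, which forces $n\le 4k$), or on neither ($t>2k$, $t<n-2k$), together with the parities of $t$ and $n-t$, yields $\beta+\alpha\le 2k$ in every case, giving the required inequality.

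The main obstacle will be the double-overlap regime $2k+4\le n\le 4k$, where both potential midpoints can simultaneously lie inside $N_k[v_0]\cap N_k[v_t]$ and the parities of $t$ and $n-t$ interact intricately; the hypothesis $n\ge 2k+4$ is exactly what is needed to close $\beta+\alpha\le 2k$ there, with equality $|R_k\{v_0,v_t\}|=2(k+1)$ attained at $t=1$.
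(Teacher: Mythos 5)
Your proposal is correct and follows essentially the same route as the paper: reduce the case $n\le 2k+3$ to $\dim_f(C_n)$ via the diameter bound, obtain the lower bound for $n\ge 2k+4$ by summing the $n$ cyclically shifted constraints coming from a family of $R_k$-sets of size exactly $2(k+1)$ in which every vertex occurs $2(k+1)$ times (you use adjacent pairs $\{v_i,v_{i+1}\}$ where the paper uses $\{u_i,u_{i+2}\}$ — an immaterial variation), and match it with the uniform weight $\tfrac{1}{2(k+1)}$. The only substantive difference is that you supply a full case analysis for the claim $|R_k\{v_x,v_y\}|\ge 2(k+1)$, which the paper asserts without proof; your verification via Observation~\ref{obs_twin}(c) is sound.
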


\begin{proof}
Let $k\in\mathbb{Z}^+$. For $n \ge 3$, let $C_n$ be given by $u_0,u_1, \ldots, u_{n-1}, u_0$. Let $g:V(C_n)\rightarrow [0,1]$ be any minimum $k$-truncated resolving function of $C_n$. 

First, let $n \le 2k+3$; then $\diam(C_n) \le \lfloor \frac{2k+3}{2}\rfloor=k+1$. By Observation~\ref{obs_diam}(b), $\dim_{k, f}(C_n)=\dim_f(C_n)$. So, by Theorem~\ref{thm_frac}(c), $\dim_{k, f}(C_n)=\frac{n}{n-1}$ for an odd $n$ and $\dim_{k, f}(C_n)=\frac{n}{n-2}$ for an even $n$.

Second, let $n\ge 2k+4$. Note that, for each $i \in \{0,1,\ldots, n-1\}$, $R_k\{u_{i}, u_{i+2}\}=\cup_{j=0}^{k}\{u_{i-j}, u_{i+2+j}\}$, where the subscript it taken modulo $n$; thus $\sum_{j=0}^{k}(g(u_{i-j})+g(u_{i+2+j})) \ge 1$. By summing over $n$ such inequalities, we have $2(k+1) g(V(C_n)) \ge n$ since each vertex appears $2(k+1)$ times in the $n$ inequalities. So, $g(V(C_n)) \ge \frac{n}{2(k+1)}$, and hence $\dim_{k,f}(C_n) \ge \frac{n}{2(k+1)}$. On the other hand, if we let $h:V(C_n)\rightarrow [0,1]$ be a function defined by $h(u_i)=\frac{1}{2(k+1)}$ for each $i \in \{0,1,\ldots,n-1\}$, then $h$ is a $k$-truncated resolving function of $C_n$ with $h(V(C_n))=\frac{n}{2(k+1)}$. To see this, for any distinct $x,y\in \{0, 1, \ldots, n-1\}$, note that $|R_k\{u_x, u_y\}| \ge 2(k+1)$ and $h(R_k\{u_x, u_y\}) \ge \frac{1}{2(k+1)}\cdot 2(k+1)=1$. So, $\dim_{k, f}(C_n) \le h(V(C_n))=\frac{n}{2(k+1)}$. Therefore, $\dim_{k,f}(C_n)=\frac{n}{2(k+1)}$.~\hfill
\end{proof}

Theorem~\ref{fkdim_cycle} implies the following

\begin{corollary}\label{fadim_cycle}
For $n \ge 3$, 
\begin{equation*}
\dim_{1,f}(C_n)=\left\{
\begin{array}{ll}
\frac{n}{2} & \mbox{ if } n \in \{3,4\},\\ 
\frac{n}{4} & \mbox{ if } n \ge 5. 
\end{array}\right.
\end{equation*}
\end{corollary}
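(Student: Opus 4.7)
The plan is to specialize Theorem~\ref{fkdim_cycle} to $k=1$ and verify that the three cases listed there collapse into the two cases claimed by the corollary. With $k=1$ we have $2k+3=5$ and $2k+4=6$, so the cutoff between the ``small'' and ``large'' regimes of Theorem~\ref{fkdim_cycle} falls at $n=5$ versus $n\ge 6$.

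First I would treat $n\in\{3,4\}$. Both values satisfy $n\le 2k+3=5$, so Theorem~\ref{fkdim_cycle} applies: for $n=3$ (odd) it gives $\frac{n}{n-1}=\frac{3}{2}=\frac{n}{2}$, and for $n=4$ (even) it gives $\frac{n}{n-2}=2=\frac{n}{2}$. Both agree with the claimed value $\frac{n}{2}$.

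Next I would handle the case $n\ge 5$. For $n=5$, which still lies in the range $n\le 2k+3$, Theorem~\ref{fkdim_cycle} (odd subcase) yields $\frac{n}{n-1}=\frac{5}{4}=\frac{n}{4}$, matching the claim. For $n\ge 6=2k+4$, the third case of Theorem~\ref{fkdim_cycle} directly gives $\frac{n}{2(k+1)}=\frac{n}{4}$. Combining these observations establishes the formula.

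There is no real obstacle here; the only thing to watch is the boundary value $n=5$, where one must note that the ``odd small'' formula $\frac{n}{n-1}$ happens to coincide with the ``large'' formula $\frac{n}{4}$, so that a single expression $\frac{n}{4}$ uniformly covers $n\ge 5$.
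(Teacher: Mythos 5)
Your proposal is correct and matches the paper's intent exactly: the corollary is stated as an immediate consequence of Theorem~\ref{fkdim_cycle} with $k=1$, and your case check (including the boundary observation that $\frac{5}{4}=\frac{n}{4}$ at $n=5$) is precisely the verification the paper leaves implicit.
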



\subsection{Grid graphs}

We examine $\dim_{k,f}(P_s\times P_t)$ for $s,t\ge2$. We show that, for some $k>1$, $\frac{\dim_{1,f}(G)}{\dim_{k,f}(G)}$ and $\frac{\dim_{k,f}(G)}{\dim_f(G)}$ can be arbitrarily large. We also characterize grid graphs $G$ satisfying $\dim_{1,f}(G)=\dim_f(G)$.

We recall some notations. Let $f(x)$ and $g(x)$ be functions defined on some subset of real numbers. We write $f(x)=O(g(x))$ if there exist positive constants $N$ and $C$ such that $|f(x)| \le C |g(x)|$ for all $x >N$, $f(x)=\Omega(g(x))$ if $g(x)=O(f(x))$, and $f(x)=\Theta(g(x))$ if $f(x)=O(g(x))$ and $f(x)=\Omega(g(x))$. We note that $\frac{\dim_1(G)}{\dim(G)}$ can be arbitrarily large (see~\cite{broadcast}) and that both $\frac{\dim_1(G)}{\dim_k(G)}$ and $\frac{\dim_k(G)}{\dim(G)}$ can be arbitrarily large for some $k>1$ (see~\cite{distKdim}). 

First, we show that $\frac{\dim_{1,f}(G)}{\dim_{k,f}(G)}$ and $\frac{\dim_{k,f}(G)}{\dim_f(G)}$ can be arbitrarily large for some $k>1$.

\begin{proposition}\emph{\cite{broadcast}}\label{adim_theta}
If $G=P_m \times P_m$ for $m\ge 2$, then $\dim_1(G)=\Theta(m^2)$. 
\end{proposition}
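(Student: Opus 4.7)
My plan is to prove the two-sided bound separately, with the upper bound being essentially free and the lower bound being the real content.

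For the upper bound, I would simply invoke Theorem~\ref{characterization_mixed}(a), which gives $\dim_1(G)\le |V(G)|-1 = m^2-1$. This already shows $\dim_1(G)=O(m^2)$, so no cleverness is required on this side.

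For the lower bound, I would use a closed-neighborhood covering argument. Let $S$ be any $1$-truncated resolving set of $G=P_m\times P_m$ and set $T = V(G)\setminus N[S]$, where $N[S]=\bigcup_{s\in S}N[s]$. The key observation is that for any $v\in T$ and any $s\in S$, the vertex $v$ is neither equal nor adjacent to $s$, so $d_1(v,s)=2$. Hence, if $T$ contained two distinct vertices $x,y$, then $d_1(x,s)=d_1(y,s)=2$ for every $s\in S$, so $S\cap R_1\{x,y\}=\emptyset$, contradicting that $S$ is a $1$-truncated resolving set. Therefore $|T|\le 1$, which gives $|N[S]|\ge m^2-1$.

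Now I exploit the bounded degree of the grid: every vertex of $P_m\times P_m$ has degree at most $4$, so $|N[s]|\le 5$ for each $s\in S$, and thus $|N[S]|\le \sum_{s\in S}|N[s]|\le 5|S|$. Combining with $|N[S]|\ge m^2-1$ yields $|S|\ge (m^2-1)/5$. Since $S$ was arbitrary, $\dim_1(G)\ge (m^2-1)/5 = \Omega(m^2)$, and together with the upper bound this gives $\dim_1(G)=\Theta(m^2)$.

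I do not anticipate serious obstacles: the lower bound hinges only on the ``every vertex not dominated by $S$ looks identical from $S$ under $d_1$'' principle, plus the trivial degree bound $\Delta(P_m\times P_m)\le 4$. The whole argument is essentially a one-paragraph domination-style counting, and the constant $1/5$ is not even important for a $\Theta$-statement — any positive constant times $m^2$ suffices.
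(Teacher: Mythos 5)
Your argument is correct. Note first that the paper does not actually prove this proposition: it is imported verbatim from the reference on broadcast metric dimension, so there is no in-paper proof to match against. Your lower bound is the standard ``near-domination'' argument for adjacency-type resolving sets: if two vertices $x,y$ both lie outside $N[S]$, then $d_1(x,s)=d_1(y,s)=2$ for every $s\in S$, so $S\cap R_1\{x,y\}=\emptyset$; hence at most one vertex is undominated, $|N[S]|\ge m^2-1$, and the degree bound $\Delta(P_m\times P_m)\le 4$ gives $|S|\ge (m^2-1)/5$. Combined with the trivial upper bound $\dim_1(G)\le m^2-1$ from Theorem~\ref{characterization_mixed}(a), this yields $\Theta(m^2)$, and every step checks out (including the edge case $m=2$, where the bound is vacuously weak but the asymptotics are unaffected). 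It is worth contrasting your method with the technique the author uses for the fractional analogue in Proposition~\ref{fadim_theta}: there the grid is tiled by disjoint $P_4\times P_3$ blocks $B_{i,j}$, each containing $R_1\{x,y\}$ for a suitable adjacent central pair, forcing weight at least $1$ per block. That block argument also proves the integral lower bound here and, unlike your domination count, survives fractional relaxation (a function can place weight $\frac{1}{2}$ everywhere and dominate nothing in the integral sense); your argument is simpler and gives an explicit constant $\frac{1}{5}$, but is specific to $\{0,1\}$-valued resolving sets.
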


\begin{proposition}\emph{\cite{distKdim}}\label{kdim_theta}
For some positive integer $k>1$, let $G=P_{k^2} \times P_{k^2}$. Then $\dim_k(G)=\Theta(k^2)$.
\end{proposition}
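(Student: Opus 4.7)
The plan is to establish $\dim_k(G) = \Theta(k^2)$ for $G = P_{k^2} \times P_{k^2}$ by proving matching $\Omega(k^2)$ and $O(k^2)$ bounds.

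For the lower bound, the key observation---following from Observation~\ref{obs_twin}(c) and a direct $\ell_1$ computation---is that for any adjacent pair $x, y \in V(G)$ differing in exactly one coordinate, $|d(z,x) - d(z,y)| = 1$ for every $z \in V(G)$, so $R_k\{x, y\} = N_k[x] \cup N_k[y]$. I would therefore exhibit a family of horizontal adjacent pairs $\{(x_i, y_i)\}$ whose endpoints lie on a sub-lattice of spacing roughly $2k+2$, so that the $k$-balls around the endpoints are pairwise disjoint. Since $|N_k[v]| = \Theta(k^2)$ (the size of an $\ell_1$ ball of radius $k$) and the grid has $k^4$ vertices, such a packing yields $\Omega(k^4/k^2) = \Omega(k^2)$ pairs. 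Any $k$-truncated resolving set must contain at least one vertex in each of the disjoint sets $R_k\{x_i, y_i\}$, forcing size at least $\Omega(k^2)$.

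For the upper bound, I would construct an explicit $k$-truncated resolving set of size $O(k^2)$. The candidate is a sub-lattice of landmarks spaced at intervals $\Theta(k)$ in each direction, containing $\Theta(k^2)$ vertices in total, augmented with a bounded number of auxiliary landmarks per tile to handle degenerate equidistance configurations. The verification splits into two regimes. If $d(x,y) > 2k+1$, then some landmark lies within distance $k$ of exactly one endpoint, and the truncated distances ($\leq k$ versus $k+1$) differ. If $d(x,y) \leq 2k+1$, then both endpoints lie in a common $\Theta(k) \times \Theta(k)$ patch, and the local landmarks behave like a resolving set for that bounded patch under the ordinary $\ell_1$ metric, exploiting the fact that $\dim(P_s \times P_t) = 2$ with suitably placed corner-type landmarks.

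The main obstacle is the upper-bound construction. The lower bound is a clean packing argument once the identity $R_k\{x,y\} = N_k[x] \cup N_k[y]$ for adjacent pairs is in hand. However, verifying that one fixed landmark set simultaneously resolves both distant pairs (where truncation is the mechanism) and near pairs (where coordinate discrimination is needed) requires either a two-scale lattice or a tile-by-tile augmentation; a naive spacing of exactly $k$ may leave certain pairs unresolved, for example when all nearby landmarks happen to be $\ell_1$-equidistant from both endpoints. Handling these edge cases without blowing up the count past $O(k^2)$ is the delicate part of the argument.
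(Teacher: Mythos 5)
First, a point of reference: the paper does not prove this proposition at all --- it is imported verbatim from~\cite{distKdim} and used as a black box. So there is no in-paper proof to match your argument against; the closest relatives are Proposition~\ref{fadim_theta} and Theorem~\ref{comp_fkdim}, whose lower bounds use exactly your packing idea (disjoint blocks $B$, each containing some $R_k\{x,y\}$ for an adjacent central pair, each forcing weight at least $1$). Your lower bound is correct and complete in outline: for adjacent $x,y$ in the grid the $\ell_1$ distance gives $|d(z,x)-d(z,y)|=1$ for all $z$, so $N_i(x)\cap N_i(y)=\emptyset$ for every $i$ and Observation~\ref{obs_twin}(c) yields $R_k\{x,y\}=N_k[x]\cup N_k[y]\subseteq N_{k+1}[x]$; spacing the pairs at distance more than $2k+2$ gives $\Theta(k^4/k^2)=\Theta(k^2)$ pairwise disjoint sets that any $k$-truncated resolving set must hit. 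That half is fine.

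The gap is in the upper bound, and you have correctly located it but not closed it. "A bounded number of auxiliary landmarks per tile to handle degenerate equidistance configurations" is precisely the step that needs to be made explicit, because the degenerate configurations are real and a bare lattice $\{0,s,2s,\ldots\}^2$ of spacing $s=\Theta(k)$ does fail on some of them. Concretely, write the grid as $\{0,\ldots,N\}^2$ and take the anti-diagonal pair $x=(N-1,N)$, $y=(N,N-1)$ at the top-right corner: for every $z$ with $z_1\le N-1$ and $z_2\le N-1$ one computes $d(z,x)=d(z,y)=2N-1-z_1-z_2$, so the \emph{entire} resolving set $R\{x,y\}$ is contained in the top row and the right column. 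If $N$ is not a multiple of $s$, the lattice contains no vertex of either line and the pair is unresolved at any truncation level. The fix is cheap --- adjoin the lattice points of the four boundary lines, or more generally for each anti-diagonal "bisector direction" guarantee a nearby off-diagonal landmark, which adds only $O(k)$ vertices per boundary line and $O(1)$ per tile --- but it has to be stated and then verified against the full case analysis of $\ell_1$ bisectors (pairs with both coordinates increasing, pairs with coordinates moving in opposite directions, pairs on a boundary line). Your two-regime split (far pairs resolved by truncation, near pairs resolved by local landmarks within distance $k$ of one endpoint, using that a landmark $w$ with $d(w,x)\neq d(w,y)$ and $\min\{d(w,x),d(w,y)\}\le k$ also separates the truncated distances) is the right skeleton; as written, though, the proposal establishes only $\dim_k(G)=\Omega(k^2)$ and a plan, not a proof, for $\dim_k(G)=O(k^2)$.
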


\begin{proposition}\label{fadim_theta}
If $G=P_{4m} \times P_{3m}$ for $m\ge 1$, then $\dim_{1,f}(G)=\Theta(m^2)$. 
\end{proposition}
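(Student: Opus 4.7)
My plan is to establish $\dim_{1,f}(G) = \Theta(m^2)$ by deriving matching upper and lower bounds of order $m^2$. The upper bound is immediate: Proposition~\ref{frac_kdim_bounds} gives $\dim_{1,f}(G) \le n/2 = 6m^2$, so $\dim_{1,f}(G) = O(m^2)$.

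For the lower bound, the idea is to use only the resolving constraints coming from adjacent pairs together with a double-counting argument. First, for any edge $xy$ of $G$, Observation~\ref{obs_twin}(c) gives $R_1\{x,y\} = (N[x]\cup N[y]) - (N(x)\cap N(y))$; since the grid graph is triangle-free, adjacent vertices share no common neighbor, and hence $R_1\{x,y\} = N[x]\cup N[y]$. Consequently, for any $1$-truncated resolving function $h$ of $G$, we have $h(N[x]\cup N[y]) \ge 1$ for every edge $xy \in E(G)$.

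Next, I would sum this inequality over all edges and switch the order of summation. The left-hand side is at least $|E(G)| = 3m(4m-1) + 4m(3m-1) = 24m^2 - 7m = \Theta(m^2)$. The right-hand side equals $\sum_{z \in V(G)} h(z) \cdot |\{xy \in E(G) : z \in N[x] \cup N[y]\}|$, where for each $z$ the count is the number of edges of $G$ with at least one endpoint in $N[z]$, which is at most $\sum_{v \in N[z]} \deg(v) \le 5 \cdot 4 = 20$, using $|N[z]| \le 5$ and that every vertex of $G$ has degree at most $4$. Therefore $20 \cdot h(V(G)) \ge 24m^2 - 7m$, and taking $h$ to be a minimum $1$-truncated resolving function of $G$ yields $\dim_{1,f}(G) \ge (24m^2 - 7m)/20 = \Omega(m^2)$.

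I do not foresee any serious obstacle: the argument is a standard LP-duality style counting bound, and the maximum-degree bound on the grid ensures each vertex can ``serve'' only $O(1)$ adjacent-pair constraints. The most delicate step is the identification $R_1\{x,y\} = N[x]\cup N[y]$ for adjacent $x,y$, but this reduces to observing that the grid contains no triangles.
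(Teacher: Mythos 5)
Your proof is correct, and your lower bound takes a genuinely different route from the paper's. The paper packs $G$ with $m^2$ vertex-disjoint $P_4\times P_3$ blocks $B_{i,j}$ and observes that for the two adjacent central vertices of each block the set $R_1\{\cdot,\cdot\}$ lies inside that block, so $g(V(B_{i,j}))\ge 1$ for each block and summing over the disjoint blocks gives $g(V(G))\ge m^2$. You instead use \emph{all} adjacent-pair constraints: since the grid is triangle-free, Observation~\ref{obs_twin}(c) gives $R_1\{x,y\}=N[x]\cup N[y]$ for every edge $xy$ (this identification is right --- for adjacent $x,y$ one checks $R_1\{x,y\}=\{x,y\}\cup(N(x)\triangle N(y))$, and the deleted set $N(x)\cap N(y)$ is empty), and a double count shows each vertex lies in $N[x]\cup N[y]$ for at most $\sum_{v\in N[z]}\deg(v)\le 20$ edges $xy$, whence $20\,h(V(G))\ge |E(G)|=24m^2-7m$. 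Your argument is more self-contained (it does not need the specific block geometry), generalizes to any triangle-free graph of bounded maximum degree, and even yields a slightly better constant ($\tfrac{6}{5}m^2-O(m)$ versus $m^2$); the paper's block-packing has the advantage that it is exactly the template reused in Theorem~\ref{comp_fkdim} for general $k$ via disjoint $P_{2k+2}\times P_{2k+1}$ blocks, whereas your counting is tailored to $k=1$. Your upper bound via $\dim_{1,f}(G)\le n/2=6m^2$ from Proposition~\ref{frac_kdim_bounds} is also fine (the paper instead cites Proposition~\ref{adim_theta} with Observation~\ref{obs_bounds}(d)); either way the bound is $O(m^2)$.
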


\begin{proof}
By Proposition~\ref{adim_theta} and Observation~\ref{obs_bounds}(d), $\dim_{1,f}(G)=O(m^2)$. To see that $\dim_{1,f}(G)=\Omega(m^2)$, suppose that the grid graph $G=P_{4m} \times P_{3m}$ is drawn in the $xy$-plane with the four corners at $(1,1)$, $(4m,1)$, $(1,3m)$ and $(4m,3m)$ with horizontal and vertical edges of equal lengths, and let $g:V(G)\rightarrow [0,1]$ be any $1$-truncated resolving function of $G$. Then, for every $P_4 \times P_3$ subgraph, say $B_{i,j}$, of $G$ with the four corners $(1+4i, 1+3j)$, $(4+4i, 1+3j)$, $(1+4i,3+3j)$ and $(4+4i, 3+3j)$, where $i,j \in \{0,1,\ldots, m-1\}$, we have $R_1\{(2+4i, 2+3j), (3+4i, 2+3j)\} \subset V(B_{i,j})$, and thus $g(V(B_{i,j})) \ge 1$. So, $\dim_{1,f}(G) \ge \sum_{j=0}^{m-1}\sum_{i=0}^{m-1} g(V(B_{i,j})) \ge m^2$, and hence $\dim_{1,f}(G)=\Omega(m^2)$. Therefore, $\dim_{1,f}(G)=\Theta(m^2)$.~\hfill
\end{proof}

\begin{theorem}\label{comp_fkdim}
For some positive integer $k>1$, let $G=P_{(2k+2)^2} \times P_{(2k+1)^2}$. Then $\dim_{k,f}(G)=\Theta(k^2)$, and thus both $\frac{\dim_{1,f}(G)}{\dim_{k,f}(G)}$ and $\frac{\dim_{k,f}(G)}{\dim_f(G)}$ can be arbitrarily large.
\end{theorem}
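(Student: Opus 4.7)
The plan is to prove $\dim_{k,f}(G)=\Theta(k^2)$ by matching asymptotic upper and lower bounds, and then to deduce the two ratio statements from Theorem~\ref{thm_frac}(f) together with the $P_4\times P_3$-tiling argument already used in Proposition~\ref{fadim_theta}.

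For the lower bound $\dim_{k,f}(G)=\Omega(k^2)$, I would mimic the tiling argument of Proposition~\ref{fadim_theta}. Draw $G$ in the $xy$-plane and partition it into non-overlapping $(2k+2)\times(2k+1)$ blocks $B_{i,j}$ for $i\in\{0,1,\ldots,2k+1\}$ and $j\in\{0,1,\ldots,2k\}$, yielding exactly $(2k+2)(2k+1)$ blocks that perfectly tile $V(G)$, since $(2k+2)^2=(2k+2)(2k+2)$ and $(2k+1)^2=(2k+1)(2k+1)$. Inside each block, whose lower-left corner is at $(1+(2k+2)i,\,1+(2k+1)j)$, let $u$ and $v$ be the two horizontally adjacent ``central'' vertices with offsets $(k,k)$ and $(k+1,k)$ respectively. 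Since $u$ and $v$ are adjacent lattice points, no third lattice point lies equidistant from them in the grid metric, so $N_i(u)\cap N_i(v)=\emptyset$ for every $i\ge 1$; by Observation~\ref{obs_twin}(c) this forces $R_k\{u,v\}=N_k[u]\cup N_k[v]$. A direct coordinate check shows that this $L^1$-ball sits inside $V(B_{i,j})$. Hence any $k$-truncated resolving function $g$ of $G$ satisfies $g(V(B_{i,j}))\ge g(R_k\{u,v\})\ge 1$, and summing over the $(2k+2)(2k+1)$ disjoint blocks gives $\dim_{k,f}(G)\ge (2k+2)(2k+1)=\Omega(k^2)$.

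For the upper bound $\dim_{k,f}(G)=O(k^2)$, I would invoke Observation~\ref{obs_bounds}(c) to reduce to bounding $\dim_k(G)$, then adapt the construction behind Proposition~\ref{kdim_theta} from~\cite{distKdim}. That result produces a $k$-truncated resolving set of size $\Theta(k^2)$ for $P_{k^2}\times P_{k^2}$ by placing landmarks on a sub-lattice with spacing $\Theta(k)$; since both sides of our grid are still $\Theta(k^2)$ in length, the same construction, rescaled to fit the rectangular dimensions, yields a $k$-truncated resolving set of size $O(k^2)$. Combining the two bounds gives $\dim_{k,f}(G)=\Theta(k^2)$.

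For the ratio statements, Theorem~\ref{thm_frac}(f) gives $\dim_f(G)=2$, so $\dim_{k,f}(G)/\dim_f(G)=\Theta(k^2)$ is unbounded as $k\to\infty$. For the other ratio, the argument of Proposition~\ref{fadim_theta} applied to $G$ directly---partitioning into $P_4\times P_3$ subgrids, of which $\Theta(k^4)$ fit since $(2k+2)^2=4(k+1)^2$ is divisible by $4$ and $(2k+1)^2\ge 3$---yields $\dim_{1,f}(G)=\Omega(k^4)$, while Proposition~\ref{frac_kdim_bounds} gives $\dim_{1,f}(G)\le |V(G)|/2=\Theta(k^4)$, so $\dim_{1,f}(G)=\Theta(k^4)$ and therefore $\dim_{1,f}(G)/\dim_{k,f}(G)=\Theta(k^2)$ is also unbounded. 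The main obstacle is making the upper bound rigorous on this non-square rectangular grid, since the truncation to $d_k$ destroys the long-range information used by standard metric-dimension arguments; the adaptation should nonetheless go through once one writes down explicit coordinates for the landmark set and verifies that every pair is resolved under $d_k$ by some landmark in its $k$-neighborhood.
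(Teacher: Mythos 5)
Your proposal is correct and follows essentially the same route as the paper: the lower bound by tiling $G$ into $(2k+2)(2k+1)$ disjoint $P_{2k+2}\times P_{2k+1}$ blocks and observing that $R_k\{u,v\}$ for the two adjacent central vertices lies inside the block, the upper bound via Observation~\ref{obs_bounds}(c) and the $\dim_k$ bound of Proposition~\ref{kdim_theta}, and the ratios via $\dim_f(G)=2$ and $\dim_{1,f}(G)=\Theta(k^4)$ from the $P_4\times P_3$ tiling. Your added care about adapting Propositions~\ref{kdim_theta} and~\ref{fadim_theta} to the rectangular (non-square, non-$4m\times 3m$) dimensions is a reasonable refinement of details the paper glosses over, but it is not a different argument.
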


\begin{proof}
For some positive integer $k>1$, let $G=P_{(2k+2)^2} \times P_{(2k+1)^2}$. Then $\dim_f(G)=2$ by Theorem~\ref{thm_frac}(f), and $\dim_{1,f}(G)=\Theta(k^4)$ by Proposition~\ref{fadim_theta}. Next, we show that $\dim_{k,f}(G)=\Theta(k^2)$. By Proposition~\ref{kdim_theta} and Observation~\ref{obs_bounds}(c), $\dim_{k,f}(G)=O(k^2)$. To see that $\dim_{k,f}(G)=\Omega(k^2)$, notice that $G$ contains disjoint union of $(2k+2)(2k+1)$ copies of $P_{2k+2} \times P_{2k+1}$. Let $g:V(G)\rightarrow [0,1]$ be any $k$-truncated resolving function of $G$. For each subgraph $P_{2k+2} \times P_{2k+1}$ of $G$, if $x$ and $y$ are the two adjacent central vertices of $P_{2k+2} \times P_{2k+1}$, then $R_k\{x,y\} \subseteq V(P_{2k+2} \times P_{2k+1})$, and hence $g(V(P_{2k+2} \times P_{2k+1})) \ge g(R_k\{x,y\}) \ge 1$; thus $\dim_{k,f}(G) \ge (2k+2)(2k+1)$. So, $\dim_{k,f}(G)=\Omega(k^2)$. Therefore, $\dim_{k,f}(G)=\Theta(k^2)$ for some positive integer $k>1$, and both $\frac{\dim_{1,f}(G)}{\dim_{k,f}(G)}$ and $\frac{\dim_{k,f}(G)}{\dim_f(G)}$ can be arbitrarily large.~\hfill
\end{proof}

Next, we characterize grid graphs $G$ satisfying $\dim_{1,f}(G)=\dim_f(G)$.

\begin{proposition}
For the grid graph $G=P_s \times P_t$ with $s\ge t\ge2$, $\dim_{1,f}(G)=\dim_f(G)$ if and only if $G\in\{P_2 \times P_2, P_3\times P_2, P_4\times P_2, P_3 \times P_3\}$.
\end{proposition}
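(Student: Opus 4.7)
Since $\dim_f(P_s \times P_t) = 2$ for all $s, t \ge 2$ by Theorem~\ref{thm_frac}(f) and $\dim_{1,f}(G) \ge \dim_f(G)$ by Observation~\ref{obs_bounds}(d), the equality $\dim_{1,f}(G) = \dim_f(G)$ is equivalent to $\dim_{1,f}(G) \le 2$. The plan is to produce an explicit 1-truncated resolving function of weight $2$ for each of the four listed grids and, for every other grid $P_s \times P_t$ with $s \ge t \ge 2$, to exhibit a small collection of vertex pairs whose 1-truncated resolving sets force $g(V(G)) > 2$ for every 1-truncated resolving function $g$.

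For the sufficiency direction, $P_2 \times P_2 = C_4$ has diameter $2$, so Observation~\ref{obs_diam}(b) yields $\dim_{1,f} = \dim_f = 2$ at once. For each of $P_3 \times P_2$, $P_4 \times P_2$, and $P_3 \times P_3$ I would exhibit a function $h$ of weight $2$: take $h = \tfrac12$ on the four corners of $P_3 \times P_2$, on the four non-corner vertices of $P_4 \times P_2$, and on the four non-corner boundary vertices (i.e.\ the midpoints of the four sides) of $P_3 \times P_3$, and $h = 0$ elsewhere. Since each of these graphs has at most nine vertices, the verification that $h(R_1\{x,y\}) \ge 1$ for every pair of distinct vertices is a finite mechanical check.

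For the necessity direction, let $G = P_s \times P_t$ with $s \ge t \ge 2$ not in the list, write $v_{i,j}$ for the vertex in row $i$ and column $j$, and let $g$ be a 1-truncated resolving function of $G$. First suppose $t = 2$, so $s \ge 5$. One computes $R_1\{v_{1,1}, v_{2,2}\} = \{v_{1,1}, v_{2,2}, v_{3,2}\}$, together with its three reflections $R_1\{v_{1,2}, v_{2,1}\}$, $R_1\{v_{s,1}, v_{s-1,2}\}$, $R_1\{v_{s,2}, v_{s-1,1}\}$. For $s \ge 6$ the two column triples $\{1,2,3\}$ and $\{s-2, s-1, s\}$ are disjoint, so summing the four inequalities $g(R_1) \ge 1$ already yields $g(V(G)) \ge 4$. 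For $s = 5$ the two triples overlap at column $3$; writing $\sigma_3 = g(v_{3,1}) + g(v_{3,2})$, the analogous sum gives $g(V(G)) + \sigma_3 \ge 4$, while $R_1\{v_{1,1}, v_{1,2}\}$ and $R_1\{v_{5,1}, v_{5,2}\}$ together force $g(V(G)) - \sigma_3 \ge 2$, and adding the two produces $g(V(G)) \ge 3$.

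Now suppose $s \ge 4$ and $t \ge 3$, and consider the eight vertex pairs inside the top-left $4 \times 3$ sub-block, weighting $\{(1,1),(2,2)\}, \{(1,3),(2,2)\}, \{(4,1),(3,2)\}, \{(4,3),(3,2)\}$ by $\tfrac14$ each and $\{(1,2),(2,1)\}, \{(1,2),(2,3)\}, \{(4,2),(3,1)\}, \{(4,2),(3,3)\}$ by $\tfrac38$ each. The critical verification is that, in the ambient grid $P_s \times P_t$, for every vertex $v$ the total weight of pairs whose $R_1$-set contains $v$ is at most $1$: inside the block a direct tally shows that the six vertices $(2,1), (2,2), (2,3), (3,1), (3,2), (3,3)$ each receive total weight exactly $1$ while the other six receive $\tfrac58$ or $\tfrac34$, and outside the block only vertices within distance $2$ of some pair endpoint can belong to any $R_1$-set, each such outside vertex lying in at most two of the eight sets. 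Summing the weighted inequalities then gives $g(V(G)) \ge 4 \cdot \tfrac14 + 4 \cdot \tfrac38 = \tfrac52 > 2$. The main obstacle is precisely this bookkeeping step: when $t \ge 4$ or $s \ge 5$, certain vertices in column $4$ or row $5$ (and a few further out) do enter some of the eight $R_1$-sets, and one must enumerate each boundary configuration to confirm that the weighted count at each such vertex stays at most $1$.
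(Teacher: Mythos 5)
Your proof is correct, but the necessity direction takes a genuinely different route from the paper's. The paper also reduces everything to lower-bounding $g(V(G))$ via inequalities $g(R_1\{x,y\})\ge 1$, but it splits by width: for $s,t\ge 4$ it sums over three pairwise disjoint four-element sets $R_1\{u_{1,1},u_{2,2}\}$, $R_1\{u_{1,2},u_{2,1}\}$ and $R_1\{u_{s-1,t},u_{s,t-1}\}$ (two in one corner, one in the opposite corner) to get $g(V(G))\ge 3$; for $t=3$ it handles $s=4$ separately with five unweighted inequalities giving $2g(V(G))\ge 5$ and $s\ge 5$ with three disjoint sets; and for $t=2$, $s\ge 5$ it again uses three disjoint sets. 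Your $t=2$ case is a minor variant of that. Your $s\ge 4$, $t\ge 3$ case is the real departure: you replace the paper's sub-case analysis by a single weighted (LP-dual) certificate supported on a $4\times 3$ corner block. I checked the tally and it works: the six vertices $(2,j),(3,j)$, $j\in\{1,2,3\}$, each receive weight exactly $1$, the remaining six block vertices receive $\tfrac58$ or $\tfrac34$, and a vertex outside the block can lie in an $R_1$-set only if it is within distance one of a pair endpoint, which forces it into at most two of the eight sets (two only for a neighbor of $(4,2)$), hence weight at most $\tfrac34$; so $g(V(G))\ge \tfrac52>2$ uniformly for all $s\ge 4$, $t\ge 3$. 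What your approach buys is uniformity --- no $s=4$ versus $s\ge 5$ or $t=3$ versus $t\ge 4$ split --- at the cost of the boundary bookkeeping you rightly flag as the delicate step; the paper buys simpler arithmetic at the cost of more cases. The sufficiency direction is essentially the paper's: your three weight-$\tfrac12$-on-four-vertices functions coincide with those in the paper's Figure~\ref{fig_grid_characterization}, and both you and the paper leave the finite verification that they are $1$-truncated resolving functions to the reader.
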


\begin{figure}[ht]
\centering
\begin{tikzpicture}[scale=.7, transform shape]

\node [draw, fill=black, shape=circle, scale=.8] (a1) at  (0,0) {};
\node [draw, fill=black, shape=circle, scale=.8] (a2) at  (1,0) {};
\node [draw, fill=black, shape=circle, scale=.8] (a3) at  (0,1) {};
\node [draw, fill=black, shape=circle, scale=.8] (a4) at  (1,1) {};

\node [draw, fill=black, shape=circle, scale=.8] (b1) at  (3,0) {};
\node [draw, shape=circle, scale=.8] (b2) at  (4, 0) {};
\node [draw, fill=black, shape=circle, scale=.8] (b3) at  (5,0) {};
\node [draw, fill=black, shape=circle, scale=.8] (b4) at  (3,1) {};
\node [draw, shape=circle, scale=.8] (b5) at  (4,1) {};
\node [draw, fill=black, shape=circle, scale=.8] (b6) at  (5,1) {};

\node [draw, shape=circle, scale=.8] (c1) at  (7,0) {};
\node [draw, fill=black, shape=circle, scale=.8] (c2) at  (8, 0) {};
\node [draw, fill=black, shape=circle, scale=.8] (c3) at  (9,0) {};
\node [draw, shape=circle, scale=.8] (c4) at  (10,0) {};
\node [draw, shape=circle, scale=.8] (c5) at  (7,1) {};
\node [draw, fill=black, shape=circle, scale=.8] (c6) at  (8,1) {};
\node [draw, fill=black, shape=circle, scale=.8] (c7) at  (9,1) {};
\node [draw, shape=circle, scale=.8] (c8) at  (10,1) {};

\node [draw, shape=circle, scale=.8] (d1) at  (12,-0.5) {};
\node [draw, fill=black, shape=circle, scale=.8] (d2) at  (13,-0.5) {};
\node [draw, shape=circle, scale=.8] (d3) at  (14,-0.5) {};
\node [draw, fill=black, shape=circle, scale=.8] (d4) at  (12,0.5) {};
\node [draw, shape=circle, scale=.8] (d5) at  (13,0.5) {};
\node [draw, fill=black, shape=circle, scale=.8] (d6) at  (14,0.5) {};
\node [draw, shape=circle, scale=.8] (d7) at  (12,1.5) {};
\node [draw, fill=black, shape=circle, scale=.8] (d8) at  (13,1.5) {};
\node [draw, shape=circle, scale=.8] (d9) at  (14,1.5) {};

\draw(a1)--(a2)--(a4)--(a3)--(a1);
\draw(b1)--(b2)--(b3)--(b6)--(b5)--(b4)--(b1);\draw(b2)--(b5);
\draw(c1)--(c2)--(c3)--(c4)--(c8)--(c7)--(c6)--(c5)--(c1);\draw(c2)--(c6);\draw(c3)--(c7);
\draw(d1)--(d2)--(d3)--(d6)--(d9)--(d8)--(d7)--(d4)--(d1);\draw(d4)--(d5)--(d6);\draw(d2)--(d5)--(d8);

\end{tikzpicture}
\caption{The four grid graphs $G$ with $\dim_{1,f}(G)=\dim_f(G)$.}\label{fig_grid_characterization}
\end{figure}
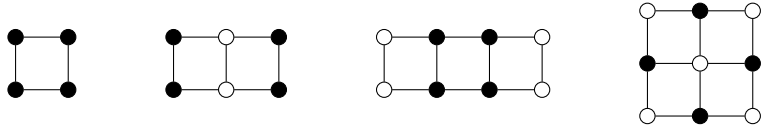

\begin{proof}
Let $G=P_s \times P_t$ with $s\ge t\ge2$.

($\Leftarrow$) Let $G\in\{P_2 \times P_2, P_3\times P_2, P_4\times P_2, P_3 \times P_3\}$; see Figure~\ref{fig_grid_characterization}. For each grid graph $G$ in Figure~\ref{fig_grid_characterization}, if we let $g: V(G)\rightarrow [0,1]$ be a function defined by $g(v)=\frac{1}{2}$ if $v$ is a solid vertex and $g(w)=0$ if $w$ is an open vertex, then $g$ is a $1$-truncated resolving function of $G$. So, $\dim_{1,f}(G)\le g(V(G))=2$. Since $\dim_{1,f}(G)\ge \dim_f(G)=2$ by Observation~\ref{obs_bounds}(d) and Theorem~\ref{thm_frac}(f), $\dim_{1,f}(G)=\dim_f(G)=2$.

($\Rightarrow$) Let $\dim_{1,f}(G)=\dim_f(G)$. Let the grid graph $G$ be drawn in the $xy$-plane with the four corners at $(1,1)$, $(s,1)$, $(1,t)$ and $(s,t)$ with horizontal and vertical edges of equal lengths; we denote by $u_{i,j}$ the vertex $(i,j)$. Let $g:V(G)\rightarrow [0,1]$ be any $1$-truncated resolving function of $G$. 

First, suppose $G=P_s \times P_t$ for $s,t\ge4$. Then $R_1\{u_{1,1}, u_{2,2}\}=\{u_{1,1}, u_{2,2}, u_{2,3}, u_{3,2}\}$, $R_1\{u_{1,2}, u_{2,1}\}=\{u_{1,2}, u_{1,3}, u_{2,1}, u_{3,1}\}$ and $R_1\{u_{s-1,t}, u_{s,t-1}\}=\{u_{s-2,t}, u_{s-1,t}, u_{s,t-1}, u_{s,t-2}\}$; thus $g(u_{1,1})+g(u_{2,2})+g(u_{2,3})+g(u_{3,2}) \ge1$, $g(u_{1,2})+g(u_{1,3})+g(u_{2,1})+g(u_{3,1}) \ge1$ and $g(u_{s-2,t})+g(u_{s-1,t})+g(u_{s,t-1})+g(u_{s,t-2}) \ge1$. By summing over the three inequalities, we have $g(V(G))\ge 3$ since $s,t\ge4$; thus $\dim_{1,f}(G)\ge3>2=\dim_f(G)$. So, $\dim_{1,f}(G)=\dim_f(G)$ implies $t\le 3$.

Second, suppose $G=P_s \times P_3$ for $s\ge 3$. If $s=3$, then $\dim_{1,f}(G)=2=\dim_f(G)$ as shown above. If $s=4$, then $R_1\{u_{1,1},u_{2,2}\}=\{u_{1,1},u_{2,2},u_{2,3},u_{3,2}\}$, $R_1\{u_{1,2},u_{2,1}\}=\{u_{1,2},u_{1,3},u_{2,1},u_{3,1}\}$, $R_1\{u_{3,3},u_{4,2}\}=\{u_{2,3},u_{3,3},u_{4,2},u_{4,1}\}$, $R_1\{u_{3,2},u_{4,3}\}=\{u_{2,2},u_{3,1},u_{3,2},u_{4,3}\}$ and $R_1\{u_{1,1}, u_{4,3}\}=\{u_{1,1},u_{1,2},u_{2,1},u_{3,3},u_{4,2},u_{4,3}\}$; thus, $g(u_{1,1})+g(u_{2,2})+g(u_{2,3})+g(u_{3,2})\ge1$, $g(u_{1,2})+g(u_{1,3})+g(u_{2,1})+g(u_{3,1})\ge1$, $g(u_{2,3})+g(u_{3,3})+g(u_{4,2})+g(u_{4,1})\ge1$, $g(u_{2,2})+g(u_{3,1})+g(u_{3,2})+g(u_{4,3})\ge1$ and $g(u_{1,1})+g(u_{1,2})+g(u_{2,1})+g(u_{3,3})+g(u_{4,2})+g(u_{4,3})\ge1$. By summing over the five inequalities, we have $2g(V(G))\ge 5$, and hence $\dim_{1,f}(G)\ge \frac{5}{2}>2=\dim_f(G)$. If $s\ge 5$, then $R_1\{u_{1,1}, u_{2,2}\}=\{u_{1,1}, u_{2,2}, u_{2,3}, u_{3,2}\}$, $R_1\{u_{1,2}, u_{2,1}\}=\{u_{1,2}, u_{1,3}, u_{2,1}, u_{3,1}\}$ and $R_1\{u_{s-1,3}, u_{s,2}\}=\{u_{s-2,3}, u_{s-1,3}, u_{s,2}, u_{s,1}\}$; thus, $g(u_{1,1})+g(u_{2,2})+g(u_{2,3})+g(u_{3,2})\ge1$, $g(u_{1,2})+g(u_{1,3})+g(u_{2,1})+g(u_{3,1})\ge1$ and $g(u_{s-2,3})+g(u_{s-1,3})+g(u_{s,2})+g(u_{s,1})\ge1$. By summing over the three inequalities, we have $g(V(G))\ge 3$ since $s\ge5$, and hence $\dim_{1,f}(G)\ge3>2=\dim_f(G)$. 

Third, suppose $G=P_s \times P_2$ for $s\ge 2$. If $s\in\{2,3,4\}$, then $\dim_{1,f}(G)=2=\dim_f(G)$ as shown above. So, suppose $s\ge 5$. Then $R_1\{u_{1,1}, u_{2,2}\}=\{u_{1,1}, u_{2,2}, u_{3,2}\}$, $R_1\{u_{1,2},u_{2,1}\}=\{u_{1,2},u_{2,1},u_{3,1}\}$ and $R_1\{u_{s,1},u_{s,2}\}=\{u_{s-1,1},u_{s-1,2},u_{s,1},u_{s,2}\}$; thus, $g(u_{1,1})+g(u_{2,2})+g(u_{3,2})\ge 1$, $g(u_{1,2})+g(u_{2,1})+g(u_{3,1})\ge1$ and $g(u_{s-1,1})+g(u_{s-1,2})+g(u_{s,1})+g(u_{s,2})\ge1$. By summing over the three inequalities, we have $g(V(G))\ge 3$ since $s\ge5$, and thus $\dim_{1,f}(G)\ge 3>2=\dim_f(G)$.~\hfill
\end{proof}


\subsection{Trees}

We examine $\dim_{k, f}(T)$ for non-trivial trees $T$. For $n\ge 2$, $\dim_{k,f}(P_n)=\dim_f(P_n)$ if and only if $n\in\{2,3,\ldots, k+2\}$ by Theorems~\ref{characterization_mixed}(c) and~\ref{frac_kdim=1}. We characterize trees $T$ satisfying $\dim_{1,f}(T)=\dim_f(T)$. For trees $T$ with $ex(T)=1$ and for any $k\in\mathbb{Z}^+$, we also characterize $T$ satisfying $\dim_{k,f}(T)=\dim_f(T)$. 

We first consider $\dim_{k,f}(P_n)$. 

\begin{theorem}\label{frac_distK_path}
Let $k\in\mathbb{Z}^+$ and $n \ge 2$. 
\begin{itemize}
\item[(a)] If $n\le k+2$, then $\dim_{k,f}(P_n)=1$.
\item[(b)] If $k+3\le n\le 2k+3$, then $\dim_{k,f}(P_n)=\frac{6+2k-n}{5+2k-n}$.
\item[(c)] Let $n\ge 2k+4$.
\begin{itemize}
\item[(i)] If $n \equiv 1 \pmod{(2k+2)}$, then $\dim_{k,f}(P_n)=\frac{n+k}{2k+2}$.
\item[(ii)] If $n \equiv 2, 3, \ldots, k+2 \pmod{(2k+2)}$, then $\dim_{k,f}(P_n)=\lceil\frac{n}{2k+2}\rceil$.
\item[(iii)] If $n\equiv 0 \pmod{(2k+2)} $ or $n\equiv k+3, k+4, \ldots, 2k+1 \pmod{(2k+2)}$, then $\lceil\frac{n}{2k+2}\rceil \le \dim_{k,f}(P_n) \le \lceil\frac{n}{2k+2}\rceil +\frac{1}{2}$.
\end{itemize}
\end{itemize}
\end{theorem}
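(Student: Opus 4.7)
My plan is to prove parts (a), (b), and (c) separately, with part (c) further split by the residue of $n$ modulo $2k+2$.

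Part (a) is immediate: for $n \le k+2$, Theorem \ref{frac_kdim=1} gives $\dim_{k,f}(P_n) = 1$.

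For part (b), the lower bound $\dim_{k,f}(P_n) \ge \frac{6+2k-n}{5+2k-n}$ is exactly what is derived in the proof of Theorem \ref{frac_kdim=1} (Case $k+3 \le n \le 2k+3$) by summing the $(2k+6-n)$ inequalities coming from the pairs $(u_1,u_2)$, $(u_{n-1},u_n)$, and $(u_{i-1},u_{i+1})$ for $i \in \{n-k-1,\ldots,k+2\}$. For the matching upper bound I will exhibit an explicit $k$-truncated resolving function designed so that each of those inequalities is tight: set $c = 1/(5+2k-n)$, and define $g(u_i) = c$ for $i \in \{1, n\} \cup \{n-k-1,\ldots,k+2\}$, with $g(u_i) = 0$ elsewhere. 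Then $g(V(P_n)) = c\bigl(2 + (2k+4-n)\bigr) = \frac{6+2k-n}{5+2k-n}$, and a case-check (using that $R_k\{u_1,u_2\}=\{u_1,\ldots,u_{k+2}\}$, $R_k\{u_{i-1},u_{i+1}\}=V(P_n)\setminus\{u_i\}$, and the symmetric versions) confirms $g$ is a $k$-truncated resolving function.

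For part (c) the common workhorse is the window identity $R_k\{u_i,u_{i+1}\} = \{u_{\max(1,i-k)},\ldots,u_{\min(n,i+k+1)}\}$ together with the observation that taking $i = (2k+2)j+1$ for successive $j$'s produces pairwise disjoint windows. In case (ii), where $n=(2k+2)q+r$ with $2 \le r \le k+2$, the pairs $(u_{(2k+2)j+1}, u_{(2k+2)j+2})$ for $j=0,1,\ldots,q$ give $q+1$ disjoint windows, each forcing weight $\ge 1$; summing yields $\dim_{k,f}(P_n) \ge q+1 = \lceil n/(2k+2)\rceil$. For the matching upper bound I place weight $1/2$ at the $2q+2$ positions $\{1, k+2, 2k+3,\ldots,(2k+2)q+1\} \cup \{n\}$ (spaced by $k+1$ except for a possibly shorter final gap of length $r-1$); verification that every $R_k$-constraint is satisfied uses that each window of length $2k+2$ already contains at least two placement points.

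Case (iii) splits further. When $r = 0$ only $q$ disjoint windows are available (since $u_{(2k+2)q+2}$ does not exist), giving lower bound $q$; the construction with $2q+1$ weight-$\tfrac12$ positions at $\{1,k+2,\ldots,(2q-1)(k+1)+1\}\cup\{n\}$ gives upper bound $q+\tfrac12$. When $r \in \{k+3,\ldots,2k+1\}$ the argument of case (ii) still produces $q+1$ disjoint windows for the lower bound $q+1$, and the construction with $2q+3$ weight-$\tfrac12$ positions gives upper bound $q+\tfrac32 = \lceil n/(2k+2)\rceil + \tfrac12$. Case (c)(i), where $n = (2k+2)q+1$ and the upper bound $q+\tfrac12$ comes from the clean uniform construction $g(u_{1+(i-1)(k+1)}) = \tfrac12$ for $i=1,\ldots,2q+1$, is the hardest: the disjoint-window argument only yields $\dim_{k,f}(P_n) \ge q$ and one needs to improve this by $\tfrac12$. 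My plan is to combine the $q$ disjoint adjacent-pair windows with the non-adjacent constraint coming from $(u_1,u_n)$ (whose $R_k$-set is $\{u_1,\ldots,u_{k+1}\}\cup\{u_{n-k},\ldots,u_n\}$) using fractional LP multipliers: halving the last disjoint window and adding the $(u_1,u_n)$ constraint and a symmetric $(u_{n-1},u_n)$ constraint with carefully tuned coefficients so that each vertex's total coverage stays $\le 1$ while the sum of multipliers reaches $q+\tfrac12$.

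The main obstacle is precisely the lower bound in case (c)(i): the naive disjoint-window technique falls short by $\tfrac12$, and one has to find a feasible dual assignment on a mixture of adjacent and non-adjacent pair constraints. Getting the multipliers balanced will depend on the parity of $k$ (because of the midpoint-exclusion quirk in $R_k$ for odd $k$), so the cleanest route is likely LP duality against the explicit $\tfrac12$-weighted primal solution. The verifications in cases (ii) and (iii) are routine but require checking non-adjacent pair constraints in addition to the window constraints driving the lower bound.
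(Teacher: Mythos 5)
Your handling of (a), (b), (c)(ii) and (c)(iii) coincides with the paper's proof: the same disjoint adjacent-pair windows drive the lower bounds, and essentially the same $\frac{1}{5+2k-n}$- and $\frac{1}{2}$-weighted functions give the matching upper bounds, so those parts are fine modulo the routine verifications you defer. The genuine gap is the lower bound $\dim_{k,f}(P_n)\ge\frac{n+k}{2k+2}=q+\frac{1}{2}$ in case (c)(i), where $n=(2k+2)q+1$: you correctly identify it as the crux, but you do not prove it, and the dual certificate you sketch provably cannot work. With your constraint set --- the $q$ disjoint windows $R_k\{u_{(2k+2)j+1},u_{(2k+2)j+2}\}$ for $j=0,\dots,q-1$ together with $R_k\{u_1,u_n\}$ and $R_k\{u_{n-1},u_n\}$ --- note that $u_1$ lies in both the first window and $R_k\{u_1,u_n\}$, that $u_{n-k-1}$ lies in both the last window and $R_k\{u_{n-1},u_n\}$, and that $u_n$ lies in both $R_k\{u_1,u_n\}$ and $R_k\{u_{n-1},u_n\}$. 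Writing $\lambda_0,\dots,\lambda_{q-1},\alpha,\beta$ for nonnegative multipliers, pointwise feasibility of the dual (each vertex covered with total multiplier at most $1$) forces $\lambda_0+\alpha\le1$, $\lambda_{q-1}+\beta\le1$ and $\alpha+\beta\le1$, so the total multiplier mass is at most $(q-2)+(\lambda_0+\alpha)+(\lambda_{q-1}+\beta)\le q$. No tuning of coefficients on those constraints reaches $q+\frac{1}{2}$.

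The missing idea is the one the paper uses: take the \emph{distance-two} pairs $\{u_{i-1},u_{i+1}\}$ at every position $i\equiv1\pmod{(k+1)}$ with $i\notin\{1,n\}$ --- spacing $k+1$, not $2k+2$ --- together with the two end pairs $\{u_1,u_2\}$ and $\{u_{n-1},u_n\}$. Since $R_k\{u_{i-1},u_{i+1}\}=\bigcup_{j=1}^{k+1}\{u_{i-j},u_{i+j}\}$ omits the midpoint $u_i$, each vertex of $P_n$ lies in at most two of these $2q+1$ sets; summing the corresponding $2q+1$ inequalities gives $2g(V(P_n))\ge 2q+1$, i.e.\ $g(V(P_n))\ge q+\frac{1}{2}=\frac{n+k}{2k+2}$. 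Equivalently, multiplier $\frac{1}{2}$ on each of these $2q+1$ constraints is a feasible dual solution of value $q+\frac{1}{2}$, and no parity-of-$k$ case analysis is needed. Until you supply this (or some other valid certificate), case (c)(i) remains unproved.
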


\begin{proof}
Let $k\in\mathbb{Z}^+$. For $n \ge 2$, let $P_n$ be given by $u_1, u_2, \ldots, u_n$. Let $g: V(P_n) \rightarrow [0,1]$ be any $k$-truncated resolving function of $P_n$. For (a), suppose $n \le k+2$. Then $\dim_{k, f}(P_n)=1$ by Theorem~\ref{frac_kdim=1}. 

For (b), suppose $k+3\le n\le 2k+3$. Then $R_k\{u_1, u_2\}=\cup_{i=1}^{k+2}\{u_i\}$, $R_k\{u_{n-1}, u_n\}=\cup_{i=n-(k+1)}^{n}\{u_i\}$, and $R_k\{u_{i-1}, u_{i+1}\}=V(P_n)-\{u_i\}$ for $u_i \in R_k\{u_1, u_2\} \cap R_k\{u_{n-1}, u_n\}$ (i.e., $n-k-1\le i \le k+2$). By summing over the $(6+2k-n)$ inequalities, we have $(5+2k-n)g(V(P_n)) \ge 6+2k-n$, i.e., $g(V(P_n)) \ge \frac{6+2k-n}{5+2k-n}$; thus, $\dim_{k,f}(P_n) \ge \frac{6+2k-n}{5+2k-n}$. On the other hand, let $h:V(P_n) \rightarrow [0,1]$ be a function defined by
$$h(u_i)=\left\{
\begin{array}{ll}
\frac{1}{5+2k-n} & \mbox{ if }  u_i \in \{u_1, u_n\} \cup (R_k\{u_1, u_2\} \cap R_k\{u_{n-1}, u_n\}),\\
0 & \mbox{ otherwise}.\\
\end{array}\right.$$
Then $h$ is a $k$-truncated resolving function of $P_n$ with $h(V(P_n))=\frac{6+2k-n}{5+2k-n}$; thus, $\dim_{k,f}(P_n) \le \frac{6+2k-n}{5+2k-n}$. Therefore, $\dim_{k,f}(P_n)=\frac{6+2k-n}{5+2k-n}$ for $k+3 \le n \le 2k+3$.

For (c), let $n \ge 2k+4$, and we consider three cases.

Case 1: $n \equiv 1 \pmod{(2k+2)}$. Note that $R_k\{u_1, u_2\}=\cup_{i=1}^{k+2}\{u_i\}$, $R_k\{u_{n-1}, u_n\}=\cup_{i=n-(k+1)}^{n}\{u_i\}$, and $R_k\{u_{i-1}, u_{i+1}\}=\cup_{j=1}^{k+1}\{u_{i-j}, u_{i+j}\}$ for $i \equiv 1 \pmod{(k+1)}$ and $i \not\in\{1,n\}$. By summing over the $\lceil\frac{n}{k+1}\rceil$ inequalities, we have $2g(V(P_n)) \ge \lceil\frac{n}{k+1}\rceil=\frac{n-1}{k+1}+1=\frac{n+k}{k+1}$, i.e., $g(V(P_n)) \ge \frac{n+k}{2(k+1)}$; thus, $\dim_{k,f}(P_n) \ge \frac{n+k}{2k+2}$. On the other hand, if we let $h: V(P_n) \rightarrow [0,1]$ be a function defined by 
$$h(u_i)=\left\{
\begin{array}{ll}
\frac{1}{2} & \mbox{ if }  i \equiv 1 \pmod{(k+1)},\\
0 & \mbox{ otherwise},\\
\end{array}\right.$$
then $h$ is a $k$-truncated resolving function of $P_n$ with $h(V(P_n))=\frac{1}{2}\lceil\frac{n}{k+1}\rceil=\frac{n+k}{2k+2}$; thus, $\dim_{k,f}(P_n) \le \frac{n+k}{2k+2}$. Therefore, $\dim_{k,f}(P_n)=\frac{n+k}{2k+2}$ for $n \equiv 1 \pmod{(2k+2)}$.

Case 2: $n \equiv 2,3,\ldots, k+2 \pmod{(2k+2)}$. Note that $R_k\{u_1, u_2\}=\cup_{i=1}^{k+2}\{u_i\}$, $R_k\{u_{n-1}, u_n\}=\cup_{i=n-(k+1)}^{n}\{u_i\}$, and $R_k\{u_{(2k+2)i+1}, u_{(2k+2)i+2}\}=\cup_{j=1-k}^{k+2} \{u_{(2k+2)i+j}\}$ for $i \in \{1,2,\ldots, \lceil\frac{n}{2k+2}\rceil-2\}$. By summing over the $\lceil\frac{n}{2k+2}\rceil$ inequalities, we have $g(V(P_n)) \ge \lceil\frac{n}{2k+2}\rceil$; thus, $\dim_{k,f}(P_n) \ge \lceil\frac{n}{2k+2}\rceil$. On the other hand, let $h: V(P_n) \rightarrow [0,1]$ be a function defined by 
$$h(u_i)=\left\{
\begin{array}{ll}
\frac{1}{2} & \mbox{ if }  i \equiv 1 \pmod{(k+1)} \mbox{ and } 1 \le i \le \frac{n}{2}, \mbox{ or }n+1-i \equiv 1 \pmod{(k+1)} \mbox{ and } \frac{n}{2} < i \le n,\\
0 & \mbox{ otherwise}.\\
\end{array}\right.$$
Then $h$ is a $k$-truncated resolving function of $P_n$ with $h(V(P_n))=\lceil\frac{n}{2k+2}\rceil$; thus, $\dim_{k,f}(P_n) \le \lceil\frac{n}{2k+2}\rceil$. Therefore, $\dim_{k,f}(P_n)=\lceil\frac{n}{2k+2}\rceil$ for $n \equiv 2,3,\ldots, k+2 \pmod{(2k+2)}$.

Case 3: $n \equiv 0 \pmod{(2k+2)}$ or $n\equiv k+3,\ldots, 2k+1 \pmod{(2k+2)}$. Note that $R_k\{u_{(2k+2)i+k+1}, u_{(2k+2)i+k+2}\}=\cup_{j=1}^{2k+2}\{u_{(2k+2)i+j}\}$ for each $i\in\{0,1,\ldots, \lceil\frac{n}{2k+2}\rceil-2\}$ and $R_k\{u_{n-1}, u_n\}=\cup_{i=n-(k+1)}^{n}\{u_i\}$. By summing over the $\lceil\frac{n}{2k+2}\rceil$ inequalities, we have $g(V(P_n)) \ge \lceil\frac{n}{2k+2}\rceil$; thus, $\dim_{k,f}(P_n) \ge \lceil\frac{n}{2k+2}\rceil$. If $h: V(P_n) \rightarrow [0,1]$ is a function defined by 
$$h(u_i)=\left\{
\begin{array}{ll}
\frac{1}{2} & \mbox{ if } i=\lceil\frac{n}{2}\rceil, \mbox{ or}\\
{} & \ \ \ 1 \le i < \lceil\frac{n}{2}\rceil \mbox{ and } i \equiv 1 \!\!\!\pmod{(k+1)},\mbox{ or }\\
{} & \ \ \ \lceil\frac{n}{2}\rceil < i \le n \mbox{ and } n+1-i \equiv 1 \!\!\!\pmod{(k+1)},\\
0 & \mbox{ otherwise},\\
\end{array}\right.$$
then $h$ is a $k$-truncated resolving function of $P_n$ with $h(V(P_n))=\lceil\frac{n}{2k+2}\rceil+\frac{1}{2}$; thus, $\dim_{k,f}(P_n) \le \lceil\frac{n}{2k+2}\rceil+\frac{1}{2}$. Therefore, $\lceil\frac{n}{2k+2}\rceil\le \dim_{k,f}(P_n)=\lceil\frac{n}{2k+2}\rceil+\frac{1}{2}$ for $n \equiv 0\pmod{(2k+2)}$ or $n\equiv k+3,\ldots, 2k+1\pmod{(2k+2)}$.~\hfill
\end{proof}

Theorem~\ref{frac_distK_path} implies the following.

\begin{corollary}\label{prop_adim_path}
Let $n \ge 2$. 
\begin{itemize}
\item[(a)] If $n\in\{2,3\}$, then $\dim_{1,f}(P_n)=1$; if $n\in\{4,5\}$, then $\dim_{1,f}(P_n)=\frac{8-n}{7-n}$.
\item[(b)] If $n\ge 6$, then
\begin{equation*}
\dim_{1,f}(P_n)=\left\{
\begin{array}{ll}
\frac{n+1}{4} & \mbox{ if } n\equiv1 \pmod4,\\
\lceil\frac{n}{4}\rceil & \mbox{ if } n\equiv2,3\pmod4.
\end{array}\right.
\end{equation*}
\item[(c)] If $n\ge 8$ and $n\equiv0 \pmod4$, then $\frac{n}{4}\le\dim_{1,f}(P_n)\le\frac{n+2}{4}$.
\end{itemize}
\end{corollary}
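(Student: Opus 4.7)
The plan is to read off all three parts of the corollary as the specialization $k=1$ of Theorem~\ref{frac_distK_path}, so that the work amounts to translating the parameter ranges and simplifying the closed-form expressions. With $k=1$ one has $k+2=3$, $2k+3=5$, and $2k+2=4$, and the residue classes modulo $2k+2$ become residue classes modulo $4$. I would go case by case through the three branches of Theorem~\ref{frac_distK_path}, matching them to the three parts of the corollary.

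First, for part (a), the range $n\le k+2$ collapses to $n\in\{2,3\}$, so Theorem~\ref{frac_distK_path}(a) immediately yields $\dim_{1,f}(P_n)=1$; the range $k+3\le n\le 2k+3$ collapses to $n\in\{4,5\}$, and plugging $k=1$ into the formula $\tfrac{6+2k-n}{5+2k-n}$ from Theorem~\ref{frac_distK_path}(b) gives $\tfrac{8-n}{7-n}$.

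Next, for part (b), the hypothesis $n\ge 6$ matches $n\ge 2k+4$, so I would apply Theorem~\ref{frac_distK_path}(c). When $n\equiv 1\pmod 4$, part (c)(i) gives $\dim_{1,f}(P_n)=\tfrac{n+k}{2k+2}=\tfrac{n+1}{4}$. When $n\equiv 2,3\pmod 4$, I would note that the residues $2,3,\ldots,k+2$ with $k=1$ are exactly $\{2,3\}$, so part (c)(ii) applies and produces $\lceil\tfrac{n}{4}\rceil$.

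Finally, for part (c) of the corollary, the hypothesis $n\equiv 0\pmod 4$ together with $n\ge 8$ falls into case (c)(iii) of the theorem; note that the other residues listed in that case, namely $k+3,\ldots,2k+1$, form an empty range when $k=1$ (since $4>3$), so $n\equiv 0\pmod 4$ is the only relevant subcase. The bounds $\lceil\tfrac{n}{2k+2}\rceil\le \dim_{k,f}(P_n)\le \lceil\tfrac{n}{2k+2}\rceil+\tfrac12$ then simplify to $\tfrac{n}{4}\le \dim_{1,f}(P_n)\le\tfrac{n+2}{4}$. There is no real obstacle here; the only bookkeeping point that deserves a sentence in the written-up proof is the verification that the residue range $\{k+3,\ldots,2k+1\}$ is empty for $k=1$, so that no additional subcases need to be tracked.
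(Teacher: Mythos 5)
Your proposal is correct and is exactly the paper's approach: the paper states this corollary with no written proof beyond the remark that it follows from Theorem~\ref{frac_distK_path}, i.e., by specializing $k=1$ precisely as you do. Your bookkeeping (the residues $2,\ldots,k+2$ becoming $\{2,3\}$, and the range $k+3,\ldots,2k+1$ being empty for $k=1$ so that only $n\equiv 0\pmod 4$ survives in case (c)(iii)) is accurate and is the only verification needed.
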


Next, we characterize non-trivial trees $T$ satisfying $\dim_{1,f}(T)=\dim_f(T)$. 
We recall some terminology and notation. An \emph{exterior degree-two vertex} is a vertex of degree $2$ that lies on a path from a terminal vertex to its major vertex, and an \emph{interior degree-two vertex} is a vertex of degree $2$ such that the shortest path to any terminal vertex includes a major vertex. For $v \in M(T)$, let $T_v$ be the subtree of $T$ induced by $v$ and all vertices belonging to the paths joining $v$ with its terminal vertices. We begin with the following useful lemmas.

\begin{lemma}\label{obs_tree1}
Let $T$ be a tree with $ex(T)\ge1$, and let $g:V(T)\rightarrow[0,1]$ be any minimum resolving function of $T$. Then, for each $v\in M_2(T)$ with $ter(v)=\alpha\ge2$, $g(V(T_v)-\{v\})=\frac{\alpha}{2}$ and $\sum_{v\in M_2(T)}(g(V(T_v))-g(v))=\dim_f(T)$.
\end{lemma}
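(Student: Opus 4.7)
The plan is to establish two things: first, a uniform lower bound $g(V(T_v) - \{v\}) \ge \alpha/2$ valid for every (not just minimum) resolving function $g$ and every $v \in M_2(T)$ with $ter(v) = \alpha$; second, that when $g$ is minimum this lower bound must be attained with equality, by a pigeonhole argument against $\dim_f(T)$. The value $\dim_f(T)$ itself is already known from Theorem~\ref{thm_frac}(a), and a direct count gives $\sigma(T) - ex_1(T) = \sum_{v \in M_2(T)} ter(v)$, so $\dim_f(T) = \sum_{v \in M_2(T)} ter(v)/2$.

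For the lower bound, fix $v \in M_2(T)$ with terminal vertices $\ell_1, \ldots, \ell_\alpha$, and let $P_v^{\ell_i}$ denote the $v$--$\ell_i$ path with $u_{i,1}$ the neighbor of $v$ along $P_v^{\ell_i}$. The key identity is
$$R\{u_{i,1}, u_{j,1}\} = \bigl(V(P_v^{\ell_i}) \cup V(P_v^{\ell_j})\bigr) \setminus \{v\} \qquad (i \ne j).$$
To verify this, observe that the paths $P_v^{\ell_i}$ share only $v$ and contain no major vertex other than $v$, so every vertex $z$ outside both paths satisfies $d(z, u_{i,1}) = d(z, v) + 1 = d(z, u_{j,1})$, which in particular handles $z = v$; meanwhile a short calculation along either path shows distances differ by $2$, placing every vertex of $V(P_v^{\ell_i}) \setminus \{v\}$ and $V(P_v^{\ell_j}) \setminus \{v\}$ into $R\{u_{i,1}, u_{j,1}\}$. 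Writing $g_i := g(V(P_v^{\ell_i}) \setminus \{v\})$, the resolving condition $g(R\{u_{i,1}, u_{j,1}\}) \ge 1$ becomes $g_i + g_j \ge 1$, and summing the $\binom{\alpha}{2}$ pairwise inequalities gives $(\alpha - 1)\sum_{i=1}^{\alpha} g_i \ge \binom{\alpha}{2}$, that is, $g(V(T_v) - \{v\}) \ge \alpha/2$.

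To close out the argument, note that the sets $V(T_v) - \{v\}$ for $v \in M_2(T)$ are pairwise disjoint, because a path from an exterior major vertex to one of its terminal vertices contains no other major vertex. Assuming now that $g$ is a minimum resolving function, we have the chain
$$\dim_f(T) = g(V(T)) \ge \sum_{v \in M_2(T)} g(V(T_v) - \{v\}) \ge \sum_{v \in M_2(T)} \frac{ter(v)}{2} = \dim_f(T),$$
which forces each $g(V(T_v) - \{v\}) = ter(v)/2$, proving the first assertion; summing over $v \in M_2(T)$ then gives the second assertion $\sum_{v\in M_2(T)}(g(V(T_v)) - g(v)) = \dim_f(T)$. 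The main obstacle is nailing down the identity for $R\{u_{i,1}, u_{j,1}\}$; once that is in hand, the rest is linear algebra plus a counting comparison. A minor subtlety is the case $d(v, \ell_i) = 1$, where $u_{i,1} = \ell_i$, but the verification carries through unchanged.
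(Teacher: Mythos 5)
Your proposal is correct and follows essentially the same route as the paper: the pairwise identity $R\{s_{i}, s_{j}\}=V(P^{i})\cup V(P^{j})$ for the neighbors of $v$ on its terminal paths, the summation of the $\binom{\alpha}{2}$ inequalities to get $g(V(T_v)-\{v\})\ge \alpha/2$, and the comparison with $\dim_f(T)=\frac{1}{2}(\sigma(T)-ex_1(T))$ to force equality when $g$ is minimum. Your explicit verification of the $R$-set identity and of the disjointness of the sets $V(T_v)-\{v\}$ fills in details the paper leaves implicit, but the argument is the same.
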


\begin{proof}
Let $T$ be a tree with $ex(T)\ge1$, and let $g:V(T)\rightarrow[0,1]$ be any minimum resolving function of $T$. Let $M_2(T)=\{v_1, v_2, \ldots, v_a\}$, where $a\ge1$. For each $i\in\{1,2\ldots, a\}$, let $ter(v_i)=\sigma_i\ge2$, let $\ell_{i,1}, \ell_{i,2}, \ldots, \ell_{i,\sigma_i}$ be the terminal vertices of $v_i$, and let $N(v_i)=\{s_{i,1}, s_{i,2}, \ldots, s_{i, \sigma_i}\}$ such that $s_{i,j}$ lies on the $v_i-\ell_{i,j}$ path and we denote by $P^{i,j}$ the $s_{i,j}-\ell_{i,j}$ path, where $j\in\{1,2,\ldots, \sigma_i\}$.

For each $i\in\{1,2,\ldots, a\}$ and for any distinct $x,y\in\{1,2,\ldots, \sigma_i\}$, note that $R\{s_{i,x}, s_{i,y}\}=V(P^{i,x})\cup V(P^{i,y})$, and thus $g(V(P^{i,x}))+g(V(P^{i,y}))\ge 1$; by summing over the $\sigma_i \choose 2$ inequalities, we have $(\sigma_i-1)\sum_{j=1}^{\sigma_i}g(V(P^{i,j}))\ge {\sigma_i \choose 2}$, which implies $g(V(T_{v_i}))-g(v_i)=\sum_{j=1}^{\sigma_i}g(V(P^{i,j}))\ge\frac{\sigma_i}{2}$. So, $\sum_{i=1}^{a}(g(V(T_{v_i}))-g(v_i))\ge \sum_{i=1}^{a}\frac{\sigma_i}{2}=\frac{1}{2} (\sigma(T)-ex_1(T))=\dim_f(T)$. Since $g$ is a minimum resolving function of $T$, $g(V(T))=\dim_f(T)$; thus, $g(V(T_{v_i}))-g(v_i)=\frac{\sigma_i}{2}$ for each $i\in\{1,2,\ldots, a\}$ and $\sum_{i=1}^{a}(g(V(T_{v_i}))-g(v_i))=\dim_f(T)$.~\hfill
\end{proof}

Lemma~\ref{obs_tree1} implies the following

\begin{corollary}\label{obs_tree2}
Let $T$ be a tree with $ex(T)\ge1$, and let $g:V(T)\rightarrow [0,1]$ be any minimum resolving function of $T$. If $x$ is a major vertex, an interior degree-two vertex, or a vertex belonging to $T_v$ for $v\in M_1(T)$, then $g(x)=0$.
\end{corollary}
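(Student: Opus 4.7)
The strategy is to combine Lemma~\ref{obs_tree1} with the fact that a minimum resolving function $g$ of $T$ satisfies $g(V(T))=\dim_f(T)$, and then exploit the pointwise nonnegativity of $g$ to force the relevant values to vanish.

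First, I would set up a disjoint decomposition of $V(T)$. Writing
\[
V(T)=\Bigl(\bigsqcup_{v\in M(T)}V(T_v)\Bigr)\sqcup I,
\]
where $I$ collects the interior degree-two vertices (together with any non-exterior major vertex, should one exist), the $V(T_v)$ are pairwise disjoint because no path from a terminal vertex of $v$ to $v$ can pass through another major vertex. Summing $g$ over this partition gives $g(V(T))=\sum_{v\in M(T)}g(V(T_v))+g(I)$.

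Next, using the minimality $g(V(T))=\dim_f(T)$ together with the identity $\sum_{v\in M_2(T)}(g(V(T_v))-g(v))=\dim_f(T)$ from Lemma~\ref{obs_tree1}, I would equate the two expressions for $\dim_f(T)$ and rearrange to
\[
\sum_{v\in M_1(T)}g(V(T_v))+g(I)+\sum_{v\in M_2(T)}g(v)=0.
\]
Since $g$ takes values in $[0,1]$, every summand on the left is nonnegative, so each must be zero. This yields $g(v)=0$ for every $v\in M_2(T)$, $g(x)=0$ for every $x\in I$ (handling the interior degree-two vertices and sweeping up any non-exterior major vertices), and $g(x)=0$ for every $x\in V(T_v)$ with $v\in M_1(T)$ (handling both the $M_1(T)$-major vertices themselves and the vertices of their attached paths). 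Together these three conclusions exhaust the three cases in the statement.

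The argument is essentially bookkeeping: the main thing to get right is the partition of $V(T)$ so that each vertex is counted exactly once, after which nonnegativity does all the work. I do not anticipate a genuine obstacle; the only place to be careful is in verifying that nothing ends up both in some $V(T_v)$ and in $I$, and that interior degree-two vertices really do lie in $I$ rather than in any $T_v$.
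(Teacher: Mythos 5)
Your argument is correct and is essentially the paper's own (implicit) derivation: the paper gives no written proof beyond ``Lemma~\ref{obs_tree1} implies the following,'' and the intended reasoning is exactly your bookkeeping — the lemma forces all of the mass of $g$ onto the disjoint sets $V(T_v)-\{v\}$ for $v\in M_2(T)$, so nonnegativity kills $g$ everywhere else, which covers all three vertex types in the statement.
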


\begin{lemma}\label{obs_tree3}
Let $T$ be a tree with $ex(T)\ge1$ satisfying $\dim_{1,f}(T)=\dim_f(T)$. 
\begin{itemize}
\item[(a)] If $v\in M_2(T)$, then every terminal vertex of $v$ is adjacent to $v$ in $T$. 
\item[(b)] $T$ contains no major vertex of terminal degree one.
\item[(c)] $T$ contains neither a major vertex of terminal degree zero nor an interior degree-two vertex.
\end{itemize}
\end{lemma}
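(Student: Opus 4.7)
The plan is to fix any minimum $1$-truncated resolving function $h$ of $T$ and to use the hypothesis $\dim_{1,f}(T)=\dim_f(T)$ to import two strong pieces of information about $h$. Since $R_1\{x,y\}\subseteq R\{x,y\}$, every $1$-truncated resolving function is also a resolving function, so $h(V(T))=\dim_{1,f}(T)=\dim_f(T)$ makes $h$ a minimum resolving function as well; Lemma~\ref{obs_tree1} therefore gives $h(V(T_v)-\{v\})=ter(v)/2$ and $\sum_j h(V(P^{i,j}))=ter(v)/2$ for every $v\in M_2(T)$, while Corollary~\ref{obs_tree2} gives $h(x)=0$ whenever $x$ is a major vertex, an interior degree-two vertex, or a vertex of $T_v$ for some $v\in M_1(T)$. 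For each of (a), (b), (c) the strategy is: assuming the violating configuration, exhibit a pair $(x,y)$ for which the set $R_1\{x,y\}$, computed via Observation~\ref{obs_twin}(c), has total $h$-weight strictly less than $1$.

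For (b), take $v\in M_1(T)$ with unique terminal $\ell$ at distance $t$ along the path $v=w_0,\ldots,w_t=\ell$. If $t\ge 2$, the pair $(\ell,w_{t-1})$ gives $R_1=\{w_{t-2},w_{t-1},\ell\}\subseteq V(T_v)$, on which $h$ vanishes by Corollary~\ref{obs_tree2}. If $t=1$, the pair $(v,\ell)$ gives $R_1=N[v]$; a brief tree-geometric check (no neighbor of a major vertex can be an exterior degree-two vertex of a different major) shows every non-$\ell$ neighbor of $v$ is an interior degree-two vertex or a major, so $h$ again vanishes on $N[v]$. For (a), take $v\in M_2(T)$ with $ter(v)=\alpha\ge 2$ having a terminal $\ell$ at distance $t\ge 2$ along $P^{i,1}:v=w_0,\ldots,w_t=\ell$. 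The pair $(\ell,w_{t-1})$ has $R_1=\{w_{t-2},w_{t-1},\ell\}$, and because $h(v)=0$, in both $t=2$ and $t\ge 3$ we get $h(R_1)\le h(V(P^{i,1}))$, forcing $h(V(P^{i,1}))\ge 1$. If $\alpha\ge 3$, the tight sum $\sum_j h(V(P^{i,j}))=\alpha/2$ together with the pairwise constraints $h(V(P^{i,j}))+h(V(P^{i,j'})) \ge 1$ force each $h(V(P^{i,j}))=1/2$, an immediate contradiction; if $\alpha=2$, the sum identity forces $h\equiv 0$ on $V(P^{i,2})$, after which the pair $(v,\ell_{i,2})$ forces $h(w_1)\ge 1$, and then $(v,\ell)$ yields an $R_1$ entirely in the $h$-zero region, contradicting $h(R_1)\ge 1$.

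For (c), having already established (a) and (b), every major of $T$ lies in $M_2$ with all terminals adjacent, so $T_w$ is a star centered at $w$ for every $w\in M(T)$. Suppose $T$ contains a non-exterior major $v$ or an interior degree-two vertex $w$; a tree-geometric check then confines the neighbors of either vertex to majors (exterior or non-exterior) or interior degree-two vertices. Two mechanisms yield the contradiction. If an exterior major $w^*$ appears among the relevant neighbors (of $v$, or of an interior degree-two vertex's neighbor), then pairing with any terminal leaf $s$ of $w^*$ gives an $R_1$ whose other members all have $h=0$, forcing $h(s)=1$; applying this to every one of the $ter(w^*)\ge 2$ terminals of $w^*$ makes their total $h$-weight equal to $ter(w^*)$ rather than $ter(w^*)/2$, contradicting Lemma~\ref{obs_tree1}. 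Otherwise, every relevant neighbor lies in the $h$-zero region, and a pair such as $(v,w)$ or $(w,x_1)$ gives $R_1$ entirely inside that region, so $h(R_1)=0$. The main obstacle is the bookkeeping in (c): one must correctly identify, for each local configuration, which pair witnesses the failure, switching between the ``terminal leaf forced to $1$'' and ``$R_1$ entirely in the $h$-zero region'' mechanisms depending on the types of vertices present in the local neighborhood.
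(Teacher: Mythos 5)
Your overall strategy is the paper's (exhibit pairs whose $R_1$-sets force weight into regions where it cannot live), but your framing is cleaner in one respect: since $R_1\{x,y\}\subseteq R\{x,y\}$, the hypothesis $\dim_{1,f}(T)=\dim_f(T)$ makes any minimum $1$-truncated resolving function $h$ simultaneously a \emph{minimum resolving} function, so Lemma~\ref{obs_tree1} and Corollary~\ref{obs_tree2} apply to $h$ directly; the paper instead carries two functions $g$ and $h$ and compares their weights on the subtrees $T_v$. Your parts (b) and (c) go through as described; in (c) your first mechanism in fact always suffices, since once (a) and (b) hold every vertex is a leaf, an exterior major vertex, or a ``bad'' vertex (interior degree-two or terminal-degree-zero major), so walking from any bad vertex toward the nearest exterior major vertex produces a bad vertex adjacent to it.

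There is, however, a step that fails in part (a), subcase $\alpha=2$. After you correctly deduce $h\equiv 0$ on $V(P^{i,2})$, then $h(w_1)\ge 1$, hence $h(w_1)=1$ and $h\equiv 0$ on $V(P^{i,1})-\{w_1\}$, you close with the pair $(v,\ell)$. That pair only works when $t=d(v,\ell)=2$: for $t\ge 3$ the common neighbour $w_1$ is \emph{not} deleted, so $R_1\{v,\ell\}=N[v]\cup\{w_{t-1},\ell\}\ni w_1$ and $h(R_1\{v,\ell\})\ge h(w_1)=1$, which is no contradiction. The repair is immediate: use the pair $(v,w_2)$ instead, for which $R_1\{v,w_2\}=(N[v]-\{w_1\})\cup\{w_2,w_3\}$ when $t\ge3$ (and $(N[v]-\{w_1\})\cup\{w_2\}$ when $t=2$), a set on which $h$ vanishes identically, since every neighbour of $v$ other than $w_1$ lies in $V(P^{i,2})$ or is a major or interior degree-two vertex. (Alternatively, for $t\ge4$ the constraint $h(\{w_{t-2},w_{t-1},\ell\})\ge1$ coming from your own opening pair is already violated once $h$ is known to vanish on $V(P^{i,1})-\{w_1\}$.) With that substitution, and with the local checks on neighbour types spelled out, your argument is complete and matches the paper's in substance.
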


\begin{proof}
Let $T$ be a tree with $ex(T)\ge 1$ satisfying $\dim_{1,f}(T)=\dim_f(T)$. Let $g:V(T)\rightarrow [0,1]$ be any minimum resolving function of $T$, and let $h:V(T)\rightarrow [0,1]$ be any minimum $1$-truncated resolving function of $T$.

\medskip

(a) Let $v\in M_2(T)$ with $ter(v)=\alpha\ge2$. Let $\ell_1, \ell_2, \ldots, \ell_{\alpha}$ be the terminal vertices of $v$ in $T$. Suppose $d(v, \ell_i)\ge 2$ for some $i\in\{1,2,\ldots, \alpha\}$. By relabeling the vertices of $T$ if necessary, let $d(v, \ell_1)\ge2$.

First, let $\alpha\ge 3$. Let the $v-\ell_1$ path be given by $v=s_0, s_1, \ldots, s_d=\ell_1$, where $d\ge2$, and we denote by $P$ the $v-\ell_1$ path in $T$. Since $R_1\{s_{d-1}, s_d\}=\{s_{d-2}, s_{d-1}, s_d\} \subseteq V(P)$, $h(V(P))\ge 1$. Since $\dim_{1,f}(T)=\dim_f(T)$ and $\alpha\ge3$, $h(V(T_v))-h(V(P))\ge \frac{\alpha-1}{2}$ using the argument in the proof for Lemma~\ref{obs_tree1}. So, $h(V(T_v))\ge \frac{\alpha+1}{2}>\frac{\alpha}{2}=g(V(T_v))$. By Lemma~\ref{obs_tree1} and Corollary~\ref{obs_tree2}, $\dim_{1,f}(T)>\dim_f(T)$. So, $\dim_{1,f}(T)=\dim_f(T)$ implies that $d(v, \ell_i)=1$ for each $i\in\{1,2,\ldots, \alpha\}$.

Second, let $\alpha=2$. Let the $v-\ell_1$ path and the $v-\ell_2$ path, respectively, be given by $v=s_0, s_1, \ldots, s_d=\ell_1$ and $v=s_0, s'_1, \ldots, s'_t=\ell_2$, where $d\ge2$; let $P^1$ and $P^2$ denote the $s_1-\ell_1$ path and the $s'_1-\ell_2$ path, respectively. Note that $R_1\{s_{d-1}, s_d\} \subseteq \{v\} \cup V(P^1)$, $R_1\{s_1,s'_1\}\subseteq V(P^1) \cup V(P^2)$, $R_1\{v, s_2\}\subseteq (N[v] \cup V(P^1))-\{s_1\}$ and $R_1\{v, s'_1\} \subseteq N[v] \cup V(P^2)$. So, $h(v)+h(V(P^1))\ge1$, $h(V(P^1))+h(V(P^2))\ge1$, $h(N[v] \cup V(P^1))-h(s_1)\ge1$ and $h(N[v] \cup V(P^2))\ge1$. Since each vertex in $N[v] \cup V(T_v)$ appears at most $3$ times in the $4$ inequalities, by summing over the $4$ inequalities, we have $3h(N[v] \cup V(T_v))\ge 4$. Since each vertex in $N[v]-\{s_1, s'_1\}$ is either a major vertex or an interior degree-two vertex in $T$, $g(N[v]-\{s_1, s'_1\})=0$ by Corollary~\ref{obs_tree2}. So, $h(N[v] \cup V(T_v))\ge \frac{4}{3}>1=g(N[v] \cup V(T_v))=g(V(T_v)-\{v\})$. Thus, $\dim_{1,f}(T)=\dim_f(T)$ implies that $d(v, \ell_1)=1=d(v,\ell_2)$.

\medskip

(b) Let $v\in M_1(T)$, and let $\ell$ be the terminal vertex of $v$ in $T$ such that the $v-\ell$ path is given by $v=s_0, s_1, \ldots, s_d=\ell$. If $d(v, \ell)\ge2$, then $R_1\{s_{d-1}, s_d\}\subseteq V(T_v)$ and $h(V(T_v))\ge1>0=g(V(T_v))$ by Corollary~\ref{obs_tree2}. If $d(v, \ell)=1$, then $R_1\{v, \ell\}=N[v]$ and $h(N[v])\ge1>0=g(N[v])$ since each vertex in $N[v]$ is a major vertex, an interior degree-two vertex, or a vertex belonging to $T_v$ for $v\in M_1(T)$. So, $\dim_{1,f}(T)=\dim_f(T)$ implies that $M_1(T)=\emptyset$.

\medskip

(c) Suppose $x$ is either an interior degree-two vertex or a major vertex of terminal degree zero in $T$. Then there exist distinct $w,w'\in M_2(T)$ such that $x$ lies on the $w-w'$ path and that $w$ and $w'$ are the only exterior major vertices on the $w-w'$ path. By relabeling the vertices of $T$ if necessary, we may assume that $xw\in E(T)$. If $\ell_1, \ell_2, \ldots, \ell_{\alpha}$ are the terminal vertices of $w$, where $\alpha\ge2$, then $\ell_iw\in E(T)$ for each $i\in\{1,2,\ldots, \alpha\}$ by (a) of the current lemma. 

For each $i\in\{1,2,\ldots, \alpha\}$, $R_1\{\ell_i, x\}=\{\ell_i\} \cup (N[x]-\{w\})$ and $h(\ell_i)+h(N[x]-\{w\})\ge 1$. By summing over the $\alpha$ inequalities, we have $(\sum_{i=1}^{\alpha}h(\ell_i))+\alpha h(N[x]-\{w\}) \ge \alpha$. Since each vertex in $N[x]-\{w\}$ is either a major vertex or an interior degree-two vertex in $T$, $g(N[x]-\{w\})=0$ by Corollary~\ref{obs_tree2}. So, we have $(\sum_{i=1}^{\alpha}h(\ell_i))+\alpha h(N[x]-\{w\})\ge \alpha>\frac{\alpha}{2}=(\sum_{i=1}^{\alpha}g(\ell_i))+\alpha g(N[x]-\{w\})=g(V(T_w)-\{w\})$. So, $\dim_{1,f}(T)=\dim_f(T)$ implies that $T$ contains neither a major vertex of terminal degree zero nor an interior degree-two vertex.~\hfill
\end{proof}

\begin{proposition}
Let $T$ be a non-trivial tree. Then $\dim_{1,f}(T)=\dim_f(T)$ if and only if $T\in\{P_2, P_3\}$, or $ex(T)\ge 1$ and $V(T)=M_2(T) \cup L(T)$. 
\end{proposition}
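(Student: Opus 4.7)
The plan is to handle the two directions separately. The forward direction will be essentially a repackaging of Lemma~\ref{obs_tree3}, while the backward direction requires exhibiting an explicit optimal $1$-truncated resolving function and verifying it by case analysis.

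For ($\Rightarrow$), assume $\dim_{1,f}(T)=\dim_f(T)$. If $ex(T)=0$, then $T$ has no major vertex and hence $T=P_n$. Since $\dim_f(P_n)=1$ by Theorem~\ref{characterization_mixed}(c), the hypothesis forces $\dim_{1,f}(P_n)=1$, which by Corollary~\ref{prop_adim_path} happens exactly when $n\in\{2,3\}$. If $ex(T)\ge 1$, I invoke Lemma~\ref{obs_tree3} to sweep out every vertex type other than leaves and $M_2$-vertices: part (b) excludes $M_1(T)$, part (c) excludes major vertices of terminal degree zero and interior degree-two vertices, and part (a) forces every $v\in M_2(T)$ to be adjacent to all of its terminals, which eliminates exterior degree-two vertices as well. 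Since every vertex of $T$ belongs to one of the categories excluded above unless it is a leaf or a vertex of $M_2(T)$, we conclude $V(T)=M_2(T)\cup L(T)$.

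For ($\Leftarrow$), the cases $T\in\{P_2,P_3\}$ give $\dim_{1,f}(T)=\dim_f(T)=1$ by Theorem~\ref{characterization_mixed}(c) and Corollary~\ref{prop_adim_path}(a). Otherwise assume $ex(T)\ge 1$ and $V(T)=M_2(T)\cup L(T)$, so that $ex_1(T)=0$ and Theorem~\ref{thm_frac}(a) gives $\dim_f(T)=\tfrac{\sigma(T)}{2}$. Define $h\colon V(T)\to[0,1]$ by $h(v)=\tfrac12$ for $v\in L(T)$ and $h(v)=0$ for $v\in M_2(T)$. Then $h(V(T))=\tfrac{\sigma(T)}{2}=\dim_f(T)$, so once $h$ is shown to be a $1$-truncated resolving function one gets $\dim_{1,f}(T)\le h(V(T))=\dim_f(T)$, which together with the inequality $\dim_{1,f}(T)\ge\dim_f(T)$ from Observation~\ref{obs_bounds}(d) closes the argument.

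The main obstacle is thus checking that $h(R_1\{x,y\})\ge 1$ for every pair of distinct $x,y\in V(T)$. The structural hypothesis yields the key facts that each $v\in M_2(T)$ is adjacent to at least two leaves, every leaf has its unique neighbor in $M_2(T)$, and any $M_2$-to-$M_2$ path has all interior vertices in $M_2(T)$. I would case-split on the types of $x$ and $y$. If both are leaves, then $x,y\in R_1\{x,y\}$ already contribute weight $1$. If both lie in $M_2(T)$, then the at-least-two terminals of $x$ satisfy $d_1(\cdot,x)=1\ne d_1(\cdot,y)$ and hence lie in $R_1\{x,y\}$, contributing weight $\ge 1$. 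The tight case is the mixed one in which $x$ is a terminal of $y\in M_2(T)$: here $x\in R_1\{x,y\}$ trivially, and any other terminal $\ell$ of $y$ satisfies $d_1(x,\ell)=2\ne 1=d_1(y,\ell)$; since $ter(y)\ge 2$ guarantees such an $\ell$, the pair $\{x,\ell\}$ gives weight $1$. The remaining mixed subcase, where $x$ is a leaf not adjacent to $y\in M_2(T)$, provides even more slack through the $\ge 2$ terminals of $y$, completing the verification.
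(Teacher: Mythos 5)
Your proposal is correct and follows essentially the same route as the paper: the forward direction is the same application of Lemma~\ref{obs_tree3}(a)(b)(c) together with the path case, and the backward direction uses the identical weight function ($\tfrac12$ on leaves, $0$ on $M_2(T)$) compared against the lower bound $\dim_{1,f}(T)\ge\dim_f(T)=\tfrac{\sigma(T)}{2}$. The only difference is that you spell out the case analysis verifying that this function is a $1$-truncated resolving function, which the paper asserts without detail; your verification is accurate.
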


\begin{proof}
Let $T$ be a tree of order $n\ge2$.

($\Leftarrow$) If $T\in\{P_2, P_3\}$, then $\dim_{1,f}(T)=1=\dim_f(T)$ by Theorems~\ref{characterization_mixed}(c) and~\ref{frac_kdim=1}. Now, suppose $ex(T)\ge 1$ and $V(T)=M_2(T) \cup L(T)$. If $g:V(T)\rightarrow [0,1]$ is a function defined by $g(u)=\frac{1}{2}$ for each $u\in L(T)$ and $g(w)=0$ for each $w\in M_2(T)$, then $g$ is a $1$-truncated resolving function of $T$ with $g(V(T))=\frac{1}{2}\sigma(T)$; thus $\dim_{1,f}(T) \le \frac{1}{2}\sigma(T)$. Since $\dim_{1,f}(T)\ge \dim_f(T)=\frac{1}{2} \sigma(T)$ by Observation~\ref{obs_bounds}(d) and Theorem~\ref{thm_frac}(a), $\dim_{1,f}(T)=\frac{1}{2}\sigma(T)=\dim_f(T)$.

($\Rightarrow$) Let $\dim_{1,f}(T)=\dim_f(T)$. If $ex(T)=0$, then $T=P_n$ and $\dim_f(P_n)=1$ by Theorem~\ref{characterization_mixed}(c). So, $\dim_{1,f}(P_n)=\dim_f(P_n)$ implies $\dim_{1,f}(P_n)=1$, and thus $n\in\{2, 3\}$ by Theorem~\ref{frac_kdim=1}. 

Now, suppose $ex(T)\ge1$. By Lemma~\ref{obs_tree3}(b)(c), $T$ contains neither an interior degree-two vertex nor a major vertex with terminal degree at most one. Moreover, by Lemma~\ref{obs_tree3}(a), $T$ contains no exterior degree-two vertex. So, each vertex in $T$ is either a major vertex with terminal degree at least two, or a leaf that is adjacent to some exterior major vertex in $T$.~\hfill
\end{proof}

For any $k\in\mathbb{Z}^+$, it's an interesting yet a challenging task to characterize all connected graphs $G$ satisfying $\dim_k(G)=\dim(G)$ or $\dim_{k,f}(G)=\dim_f(G)$ even when $G$ is restricted to trees. We recall the following result on trees $T$ satisfying $\dim_1(T)=\dim(T)$.

\begin{proposition}\emph{\cite{broadcast}}
For any non-trivial tree $T$, $\dim_1(T)=\dim(T)$ if and only if $T\in\{P_2, P_3\}$ or $T$ is a tree obtained from the star $K_{1,x}$ ($x \ge 3$) by subdividing at most $x-1$ edges exactly once.
\end{proposition}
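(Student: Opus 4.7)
The plan is to handle the biconditional by explicit construction on one side and by structural obstruction on the other, using $\dim_1(T)\ge\dim(T)$ from Observation~\ref{obs_bounds}(b) throughout.

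For the forward direction, $T\in\{P_2,P_3\}$ gives $\dim(T)=\dim_1(T)=1$ by Theorem~\ref{characterization_mixed}(a). For the subdivided-star case, let $T$ arise from $K_{1,x}$ ($x\ge3$) by subdividing $s\le x-1$ edges exactly once, with center $v$, subdivision midpoints $m_1,\dots,m_s$, far leaves $\ell_1,\dots,\ell_s$ (where $\ell_i\sim m_i$), and near leaves $\ell_{s+1},\dots,\ell_x$ (each adjacent to $v$). The classical tree metric-dimension formula yields $\dim(T)=\sigma(T)-ex(T)=x-1$. I would exhibit $S=\{m_1,\dots,m_s\}\cup\{\ell_{s+1},\dots,\ell_{x-1}\}$ of size $x-1$ as a $1$-truncated resolving set: tabulating the $d_1$-signatures to $S$ gives $v\mapsto(1,\dots,1)$, $\ell_x\mapsto(2,\dots,2)$, and $\ell_i\mapsto$ ``a single $1$ at coordinate $m_i$, rest $2$'s'' for $1\le i\le s$, so the non-$S$ signatures are pairwise distinct (and each $w\in S$ self-identifies with a $0$).

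For the backward direction, assume $\dim_1(T)=\dim(T)$. If $T$ is a path, then $\dim(T)=1$ forces $\dim_1(T)=1$, and Theorem~\ref{characterization_mixed}(a) gives $T\in\{P_2,P_3\}$. Otherwise $T$ has $ex(T)\ge1$ and $\dim(T)=\sigma(T)-ex(T)$; I would derive four structural restrictions whose conjunction is exactly the claimed family: (i) $T$ has a unique exterior major vertex $v$, (ii) every terminal path from $v$ has length at most $2$, (iii) $T$ contains neither a non-exterior major vertex nor an interior degree-$2$ vertex, and (iv) at least one terminal path from $v$ has length $1$. Each is proved contrapositively: assuming a violation, I would exhibit for every candidate $S\subseteq V(T)$ of size $\dim(T)$ a pair $x\ne y$ with $R_1\{x,y\}\cap S=\emptyset$.

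The hard part will be executing (i)--(iv) uniformly, since the twin equivalence classes of sibling leaves already swallow most of the $\dim(T)$-budget and leave essentially no slack. The key local tool, via Observation~\ref{obs_twin}(c), is that on a terminal path $v=p_0,p_1,\dots,p_d=\ell$ of length $d\ge3$ one has $R_1\{p_{d-1},p_d\}=\{p_{d-2},p_{d-1},p_d\}$ and $R_1\{p_{d-2},p_{d-1}\}=\{p_{d-3},p_{d-2},p_{d-1},p_d\}$, forcing at least two elements of $S$ inside that single leg, which overshoots the per-twin-class budget and yields (ii). A parallel disjoint-support calculation along the $u$--$v$ path for two exterior major vertices, or at an interior degree-$2$ vertex, yields (i) and (iii). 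For (iv), the fully subdivided star with all $x$ legs of length $2$ admits, for any $|S|=x-1$, exactly one index $k$ such that neither $\ell_k$ nor $m_k$ lies in $S$, and then the pair $(\ell_k,m_k)$ has identical all-$2$'s signatures to $S$; this forces $\dim_1(T)\ge x>x-1=\dim(T)$. Assembling (i)--(iv) then produces exactly the subdivided-star family with $s\le x-1$.
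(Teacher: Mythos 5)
First, note that the paper itself gives no proof of this proposition; it is quoted from \cite{broadcast}, so there is no internal argument to compare against and your proposal must stand on its own. Your forward direction is fine: $\dim(T)=\sigma(T)-ex(T)=x-1$, the explicit set $S=\{m_1,\dots,m_s\}\cup\{\ell_{s+1},\dots,\ell_{x-1}\}$ with the stated $d_1$-signatures verifies $\dim_1(T)\le x-1$, and $\dim_1(T)\ge\dim(T)$ from Observation~\ref{obs_bounds}(b) gives equality.

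The backward direction, however, has a genuine gap at its declared ``key local tool.'' On a leg $v=p_0,p_1,\dots,p_d=\ell$ with $d\ge3$, the two sets you invoke, $R_1\{p_{d-1},p_d\}=\{p_{d-2},p_{d-1},p_d\}$ and $R_1\{p_{d-2},p_{d-1}\}=\{p_{d-3},p_{d-2},p_{d-1},p_d\}$, share the three vertices $p_{d-2},p_{d-1},p_d$, so the single vertex $p_{d-1}$ (say) meets both; nothing about ``two elements of $S$ inside that leg'' follows. The conclusion (ii) is true, but proving it needs different pairs and a global count: e.g.\ for $d=3$ one must play $R_1\{p_1,p_3\}=\{v,p_1,p_3\}$ against pairs such as $R_1\{\ell',p_1\}=\{\ell',p_1,p_2\}$ for a leaf $\ell'$ of another leg, and track how the remaining budget is already consumed by the twin classes of the other legs. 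A second, smaller gap is in (iv): for the fully subdivided star it is not true that every $S$ with $|S|=x-1$ misses ``exactly one'' leg, nor that the missed pair $(\ell_k,m_k)$ always has all-$2$ signatures --- if $v\in S$ then $d_1(m_k,v)=1\ne 2=d_1(\ell_k,v)$ and that pair \emph{is} resolved by $v$. You must split cases: if $v\in S$ then at least two legs are wholly missed and the pair $\{m_i,m_j\}$ with $R_1\{m_i,m_j\}=\{m_i,\ell_i,m_j,\ell_j\}$ is unresolved; if $v\notin S$ and some leg is missed, then $R_1\{m_k,\ell_k\}=\{v,m_k,\ell_k\}$ is unresolved. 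Finally, steps (i) and (iii) are only asserted (``a parallel disjoint-support calculation''), so as written the harder half of the equivalence is not established.
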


Now, for trees $T$ with $ex(T)=1$, we characterize $T$ satisfying $\dim_{k,f}(T)=\dim_f(T)$.

\begin{proposition}
Let $k\in\mathbb{Z}^+$, and let $T$ be a tree with $ex(T)=1$ such that $\ell_1, \ell_2, \ldots, \ell_{\alpha}$ are the terminal vertices of the exterior major vertex $v$ in $T$. Then $\dim_{k,f}(T)=\dim_f(T)$ if and only if $d(v, \ell_i) \le k$ for each $i\in\{1,2,\ldots,\alpha\}$.
\end{proposition}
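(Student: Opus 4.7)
My plan begins with recognizing the structure of $T$: because $ex(T)=1$, no other major vertex can exist---one taken farthest from $v$ would have every branch ending at a leaf and would itself be exterior, forcing $|M(T)|\ge 2$. So $T$ is a spider with $\alpha=\deg(v)\ge 3$ legs $v-v_{i,1}-\cdots-v_{i,d_i}=\ell_i$, and Theorem~\ref{thm_frac}(a) gives $\dim_f(T)=\alpha/2$.

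For the forward direction I argue by contraposition. Assume $d_1\ge k+1$ and suppose some $k$-truncated resolving function $g$ attains $g(V(T))=\alpha/2$. Since $R_k\{x,y\}\subseteq R\{x,y\}$ for every pair, $g$ is simultaneously a minimum resolving function of $T$. Corollary~\ref{obs_tree2} then forces $g(v)=0$, and the argument in the proof of Lemma~\ref{obs_tree1} (applied with the single class $M_2(T)=\{v\}$) makes all $\binom{\alpha}{2}$ pair inequalities $g(V(P^i))+g(V(P^j))\ge 1$ simultaneously tight, so that $g(V(P^j))=1/2$ for every $j$. I then evaluate at $x=v_{1,d_1-1}$, $y=\ell_1$: for any $z$ lying on a leg $j\ne 1$, $d(z,x)\ge k+1$ and $d(z,y)\ge k+2$, so $d_k(z,x)=d_k(z,y)=k+1$ and $z\notin R_k\{x,y\}$. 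This confines $R_k\{x,y\}$ to $V(P^1)\cup\{v\}$, giving $g(R_k\{x,y\})\le g(V(P^1))+g(v)=1/2<1$, contradicting the $k$-truncated resolving condition.

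For the reverse direction, assume $d_i\le k$ for every $i$ and define $g(v_{i,1})=1/2$ for each $i$, $g=0$ elsewhere, so $g(V(T))=\alpha/2$. It suffices to check that for every distinct $x,y\in V(T)$ at least two of the pivots $v_{1,1},\ldots,v_{\alpha,1}$ lie in $R_k\{x,y\}$. The crucial observation is that $d(v_{i,1},w)\le 1+\max_j d_j\le k+1$ for every $w\in V(T)$, so $v_{i,1}$ loses distinguishing power under $d_k$ only when its true distances to both $x$ and $y$ equal exactly $k+1$. A short case split---on whether $x,y$ share a leg, lie on different legs with various relations between $d(v,x)$ and $d(v,y)$, or one of them is $v$---shows that in every configuration at most $\alpha-2$ pivots can fail, so at least two succeed. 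Combined with $\dim_{k,f}(T)\ge\dim_f(T)=\alpha/2$ from Observation~\ref{obs_bounds}(c), this yields $\dim_{k,f}(T)=\dim_f(T)$.

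The main obstacle is the case analysis in the reverse direction. The tightest configurations occur when $x,y$ lie on different legs at equal distances from $v$: the pivots on the other $\alpha-2$ legs then coincide in their $d_k$-distances to $x$ and $y$, leaving only the two ``home'' pivots $v_{i_x,1}$ and $v_{i_y,1}$ to do the distinguishing, so the count has no slack---and it is precisely the hypothesis $d_j\le k$ that keeps those two home pivots from being swallowed by the truncation as well.
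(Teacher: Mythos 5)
Your proof is correct. The reverse direction is the same as the paper's: the identical weighting ($\tfrac12$ on the neighbors of $v$, $0$ elsewhere) and the identical three-way case split (same leg; different legs with $d(v,x)=d(v,y)$; different legs with $d(v,x)\neq d(v,y)$), which you only assert rather than execute, but which does close exactly as you predict --- the equal-depth different-legs configuration is the tight one, and since every pivot is within distance $k+1$ of every vertex, a pivot lies in $R_k\{x,y\}$ precisely when it lies in $R\{x,y\}$, so the count reduces to the untruncated one. Where you genuinely diverge is the forward direction. The paper is direct and self-contained: from $R_k\{u_{z-1},\ell_1\}\subseteq\{v\}\cup V(P^1)$ it gets $h(v)+h(V(P^1))\ge 1$, adds $\sum_{i\ge 2}h(V(P^i))\ge\frac{\alpha-1}{2}$ from the pair constraints on the short legs, and concludes $h(V(T))\ge\frac{\alpha+1}{2}>\frac{\alpha}{2}$. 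You instead exploit rigidity: a $k$-truncated resolving function of weight $\alpha/2$ is automatically a minimum resolving function (as $R_k\{x,y\}\subseteq R\{x,y\}$), so Corollary~\ref{obs_tree2} gives $g(v)=0$ and the tightness in Lemma~\ref{obs_tree1} pins $g(V(P^j))=\tfrac12$ for every $j$ --- a step worth one extra line, since $a_i+a_j\ge 1$ for all pairs together with $\sum_j a_j=\alpha/2$ and $\alpha\ge 3$ is what forces each $a_j=\tfrac12$ --- after which the same critical pair at the far end of the long leg violates its constraint. Both arguments hinge on the same confinement of $R_k$ for that pair; the paper's version avoids any appeal to the structure of optimal solutions, while yours extracts the exact shape of every optimal function at the cost of importing the tree lemmas.
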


\begin{proof}
Let $k\in\mathbb{Z}^+$, and let $T$ be a tree with $ex(T)=1$. Let $v$ be the exterior major vertex of $T$, $N(v)=\{s_1, s_2, \ldots, s_{\alpha}\}$, and let $\ell_1, \ell_2, \ldots, \ell_{\alpha}$ be the terminal vertices of $v$ in $T$ such that $s_i$ lies on the $v-\ell_i$ path for each $i\in\{1,2,\ldots, \alpha\}$, where $\alpha\ge3$. Let $P^i$ denote the $s_i-\ell_i$ path, where $i\in\{1,2,\ldots, \alpha\}$.

($\Leftarrow$) Suppose $d(v, \ell_i) \le k$ for each $i\in\{1,2,\ldots,\alpha\}$. Let $g: V(T)\rightarrow[0,1]$ be a function defined by 
\begin{equation*}
g(u)=\left\{
\begin{array}{ll}
\frac{1}{2} & \mbox{ if } u\in \{s_1, s_2, \ldots, s_{\alpha}\},\\
0 & \mbox{ otherwise}.
\end{array}\right.
\end{equation*}
We show that $g$ is a $k$-truncated resolving function for $T$. Let $x$ and $y$ be distinct vertices in $T$. First, suppose $x$ and $y$ lie on the $v-\ell_i$ path for some $i\in\{1,2,\ldots, \alpha\}$; let $d(v,x)<d(v,y)$ without loss of generality. Since $d(x,s_j) \le k$ and $d_k(x,s_j)\neq d_k(y,s_j)$ for each $j\in\{1,2,\ldots, \alpha\}-\{i\}$, $R_k\{x,y\}\supseteq N(v)-\{s_i\}$ and $g(R_k\{x,y\}) \ge g(N(v))-g(s_i)=\frac{\alpha-1}{2}\ge1$ since $\alpha\ge3$. Second, suppose $x$ lies on the $s_i-\ell_i$ path and $y$ lies on the $s_j-\ell_j$ path for distinct $i,j\in\{1,2,\ldots, \alpha\}$. If $d(v, x)=d(v,y)$, then $R_k\{x,y\}=V(P^i)\cup V(P^j)$ and $g(R_k\{x,y\})\ge g(V(P^i))+g(V(P^j))\ge g(s_i)+g(s_j)=1$. If $d(v, x)\neq d(v,y)$, say $d(v,x)<d(v,y)$, then $d(x, s_t) \le k$ and $d_k(x,s_t)\neq d_k(y,s_t)$ for each $t\in\{1,2,\ldots, \alpha\}-\{j\}$; thus, $R_k\{x,y\} \supseteq N(v)-\{s_j\}$ and $g(R_k\{x,y\})\ge g(N(v))-g(s_j)\ge \frac{\alpha-1}{2}\ge1$. So, $g$ is a $k$-truncated resolving function of $T$ with $g(V(T))=\frac{\alpha}{2}$, and thus $\dim_{k,f}(T) \le \frac{\alpha}{2}=\dim_f(T)$ by Theorem~\ref{thm_frac}(a). Since $\dim_{k,f}(T)\ge\dim_f(T)$ by Observation~\ref{obs_bounds}(c), we have $\dim_{k,f}(T)=\frac{\alpha}{2}=\dim_f(T)$.

($\Rightarrow$) Let $\dim_{k,f}(T)=\dim_f(T)$. Assume, to the contrary, that $d(v, \ell_i)\ge k+1$ for some $i\in\{1,2,\ldots, \alpha\}$. By relabeling the vertices of $T$ if necessary, let $d(v, \ell_1)\ge k+1$ and let $v-\ell_1$ path be given by $v=u_0, s_1=u_1, u_2, \ldots, u_k, u_{k+1},\ldots, u_z=\ell_1$. Let $h: V(T)\rightarrow [0,1]$ be a minimum $k$-truncated resolving function of $T$. Since $R_k\{u_{z-1}, u_z\} \subseteq \{v\} \cup V(P^1)$, $h(v)+h(V(P^1))\ge h(R_k\{u_{z-1}, u_z\})\ge 1$. Note that, for any distinct $i, j\in\{2,3,\ldots, \alpha\}$, $R_k\{s_i, s_j\}\subseteq V(P^i) \cup V(P^j)$ and $h(V(P^i))+h(V(P^j))\ge1$; thus, $\sum_{i=2}^{\alpha}h(V(P^i)) \ge \frac{\alpha-1}{2}$. So, $\dim_{k,f}(T)=h(V(T))=h(v)+\sum_{i=1}^{\alpha}h(V(P^i))\ge 1+\frac{\alpha-1}{2}=\frac{\alpha+1}{2}>\frac{\alpha}{2}=\dim_f(T)$, which contradicts the assumption that $\dim_{k,f}(T)=\dim_f(T)$. So, $d(v, \ell_i)\le k$ for each $i\in\{1,2,\ldots, \alpha\}$.~\hfill
\end{proof}


\section{Comparison on $\dim(G)$, $\dim_f(G)$, $\dim_k(G)$ and $\dim_{k,f}(G)$}\label{Sec_comparison}

In this section, we examine the relation among $\dim_f(G)$, $\dim_{k,f}(G)$, $\dim(G)$ and $\dim_k(G)$ for $k\in\mathbb{Z}^+$ in conjunction with Observation~\ref{obs_bounds}. We show that, for two connected graphs $H$ and $G$ with $H \subset G$, $\frac{\dim_{k,f}(H)}{\dim_{k,f}(G)}$ can be arbitrarily large. We also show the existence of non-isomorphic graphs $G$ and $H$ with $\dim_k(G)=\dim_k(H)$ and $\dim_{k,f}(G) \neq \dim_{k,f}(H)$. We conclude the paper with some open problems.

It is known that metric dimension is not a monotone parameter on subgraph inclusion (see~\cite{linegraph}), and the following results were obtained in~\cite{broadcast, distKdim}. 

\begin{theorem}
Let $H$ and $G$ be connected graphs with $H\subset G$. Then
\begin{itemize}
\item[(a)] \emph{\cite{broadcast}} $\frac{\dim(H)}{\dim(G)}$ and $\frac{\dim_1(H)}{\dim_1(G)}$ can be arbitrarily large;
\item[(b)] \emph{\cite{distKdim}} for any $k\in \mathbb{Z}^+$, $\frac{\dim_k(H)}{\dim_k(G)}$ can be arbitrarily large.
\end{itemize}
\end{theorem}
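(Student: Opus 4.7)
The plan is to prove both parts by constructing, for each positive integer $t$, an explicit pair $(H_t,G_t)$ of connected graphs with $H_t\subset G_t$ along which the relevant ratio tends to infinity. Both parts are cited from \cite{broadcast} and \cite{distKdim}, so one approach is simply to invoke those references; below I sketch an independent construction. The common principle is to force $\dim(H_t)$, $\dim_1(H_t)$, or $\dim_k(H_t)$ to be large by stocking $H_t$ with many pairwise disjoint twin pairs (Observation~\ref{obs_twin}), while designing $G_t\supset H_t$ whose added edges form a small ``spine'' that is resolvable by only a constant or logarithmic number of vertices.

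For the $\dim$-portion of part~(a), I would take $H_t$ to be the caterpillar with spine $v_1v_2\cdots v_t$ and two pendant leaves $\ell_i,\ell_i'$ attached to each $v_i$. The $t$ twin pairs $\{\ell_i,\ell_i'\}$ give $\dim(H_t)\ge t$ by Observation~\ref{obs_twin}(a). Then $G_t$ is obtained from $H_t$ by adjoining the edges $\ell_i'\ell_{i+1}'$ for $1\le i\le t-1$, stitching the primed leaves into a second path that breaks the twin relation. A direct distance calculation shows that $\{\ell_1,\ell_t\}$ assigns a distinct distance-pair to every vertex of $G_t$, so $\dim(G_t)\le 2$ and hence $\dim(H_t)/\dim(G_t)\ge t/2\to\infty$. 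For the $\dim_1$-portion of part~(a) and for part~(b), the same ``many twins in $H_t$, thin spine in $G_t$'' idea is used, but now the spine must distinguish using only adjacency (respectively, $k$-truncated) distances. One way is to attach a set of $O(\log t)$ ``label'' vertices whose adjacency pattern encodes the index $i$ of $v_i$ in binary (respectively, in base $k+2$, since the $k$-truncated distance takes values in $\{0,1,\ldots,k+1\}$), giving $\dim_1(G_t),\dim_k(G_t)=O(\log t)$ while the twin pairs continue to force $\dim_1(H_t),\dim_k(H_t)=\Omega(t)$.

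The main obstacle is verifying simultaneously that (i) $H_t$ remains a subgraph of $G_t$, so edges and vertices may only be added and never deleted, (ii) the added spine does not itself create new unresolved vertex pairs elsewhere in $G_t$, and (iii) in the truncated setting, each intended resolver lies within truncated distance of every pair it is meant to distinguish. For the $\dim$ case (i) and (ii) reduce to a finite distance check, but (iii) forces a careful geometric placement of the spine relative to the twin branches of $H_t$, and is precisely where the combinatorics of \cite{broadcast,distKdim} does the real work.
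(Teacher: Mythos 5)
There is a genuine gap in the construction you give for the $\dim$-portion of part (a). In your graph $G_t$ the set $\{\ell_1,\ell_t\}$ is \emph{not} a resolving set: for $2\le j\le t-1$ one has $d(\ell_1,\ell_j)=1+(j-1)+1=j+1$ and also $d(\ell_1,\ell_j')=1+j=j+1$ (the shortest $\ell_1$--$\ell_j'$ walk may travel along either the $v$-path or the $\ell'$-path, both of length $j$ from $v_1$), and symmetrically $d(\ell_t,\ell_j)=d(\ell_t,\ell_j')=t-j+2$. So every pair $\{\ell_j,\ell_j'\}$ with $2\le j\le t-1$ is unresolved by your proposed landmark set. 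In fact no $v_i$ and no unprimed leaf resolves such a pair; only primed leaves do, so the construction can be salvaged (e.g.\ $\{\ell_1,\ell_1',\ell_t'\}$ works), but the verification you call ``a direct distance calculation'' is precisely where your argument breaks. The $\dim_1$ and $\dim_k$ portions are sketched too loosely to assess: attaching $O(\log t)$ label vertices that encode the index of $v_i$ says nothing about how the two leaves $\ell_i,\ell_i'$ hanging at the \emph{same} $v_i$ become distinguishable under adjacency or truncated distances in $G_t$ (they have identical positions relative to any spine-based gadget unless the gadget is wired to the leaves themselves), and this is exactly condition (iii) that you flag but do not resolve.

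For comparison: the paper does not reprove this theorem (it is quoted from \cite{broadcast} and \cite{distKdim}), but immediately afterwards it recalls the construction from \cite{broadcast} that underlies it: $H=K_{m(m+1)/2}$ with vertex classes $V_1,\dots,V_m$, $|V_i|=i$, and $G$ obtained by adding $m$ new vertices $u_1,\dots,u_m$ whose neighborhoods give every vertex of $H$ a distinct trace on $\{u_1,\dots,u_m\}$. Then $\dim(H)=\frac{m(m+1)}{2}-1$ (all vertices of $K_n$ are mutual twins), $\dim(G)\le m$, and --- crucially --- $\diam(G)=2$, so by Observation~\ref{obs_diam}(a) the truncated dimensions coincide with the ordinary ones for every $k\ge1$. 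That single construction settles (a) and (b) simultaneously, whereas your route requires a separate, increasingly delicate gadget for each $k$. If you want an independent proof, you should either adopt that diameter-two trick or supply the missing verification that your label gadget separates the former twin pairs under $d_k$.
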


We recall the following construction from~\cite{broadcast}. For $m \ge 3$, let $H=K_{\frac{m(m+1)}{2}}$; let $V(H)$ be partitioned into $V_1, V_2, \ldots, V_m$ such that $V_i=\{w_{i,1}, w_{i,2}, \ldots, w_{i,i}\}$ with $|V_i|=i$, where $i \in \{1,2,\ldots,m\}$. Let $G$ be the graph obtained from $H$ and $m$ isolated vertices $u_1,u_2, \ldots, u_m$ such that, for each $i \in \{1,2,\ldots, m\}$, $u_i$ is joined by an edge to each vertex of $V_i \cup (\cup_{j=i+1}^{m}\{w_{j,i}\})$; notice $H \subset G$. Since $\diam(H)=1$ and $\diam(G)=2$, by Observation~\ref{obs_diam}(b), $\dim_{k,f}(H)=\dim_f(H)$ and $\dim_{k,f}(G)=\dim_f(G)$ for any $k\in\mathbb{Z}^+$. Note that $\dim_f(H) =\frac{m(m+1)}{4}$ by Theorem~\ref{thm_frac}(e), and $\dim_f(G) \le m$ by Observation~\ref{obs_bounds}(a) since $\{u_1, u_2, \ldots, u_m\}$ forms a resolving set of $G$. So, $\frac{\dim_{k,f}(H)}{\dim_{k,f}(G)}=\frac{\dim_f(H)}{\dim_f(G)} \ge \frac{m+1}{4}$ for any $k\in\mathbb{Z}^+$, which implies the following 

\begin{corollary}
For any $k\in\mathbb{Z}^+$, there exist connected graphs $H$ and $G$ such that $H \subset G$ and both $\frac{\dim_f(H)}{\dim_f(G)}$ and $\frac{\dim_{k,f}(H)}{\dim_{k,f}(G)}$ can be arbitrarily large.
\end{corollary}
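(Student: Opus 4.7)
The plan is to use directly the construction recalled in the paragraph immediately preceding the corollary. For each integer $m\ge 3$, let $H=K_{m(m+1)/2}$ with the vertex partition $V_1,\dots,V_m$, $V_i=\{w_{i,1},\dots,w_{i,i}\}$, and form $G$ by adjoining $m$ new vertices $u_1,\dots,u_m$ with the adjacencies described in the excerpt. Then $H\subset G$, and I will show that the ratios $\dim_f(H)/\dim_f(G)$ and $\dim_{k,f}(H)/\dim_{k,f}(G)$ both grow at least linearly in $m$, which gives the two unboundedness statements simultaneously.

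The first step is to note that $\diam(H)=1$ and $\diam(G)=2$: every pair of vertices of $H$ is adjacent in $G$, and any $u_i$ reaches every vertex of $H$ through its neighbors in $V_i\cup(\bigcup_{j>i}\{w_{j,i}\})$, while two distinct $u_i,u_{i'}$ share a common neighbor in $H$ by construction. Hence for any $k\in\mathbb{Z}^+$ we have $k\ge 1\ge\diam(H)-1$ and $k\ge 1=\diam(G)-1$, so Observation~\ref{obs_diam}(b) yields
\[
\dim_{k,f}(H)=\dim_f(H),\qquad \dim_{k,f}(G)=\dim_f(G).
\]
Thus it suffices to work entirely at the level of $\dim_f$.

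Next I compute or bound the two fractional metric dimensions. Since $H=K_{m(m+1)/2}$ is complete, viewing it as a complete multipartite graph with every partite set of size $1$, Theorem~\ref{thm_frac}(e) gives $\dim_f(H)=\frac{m(m+1)}{4}$. For $G$, I invoke Observation~\ref{obs_bounds}(a) to write $\dim_f(G)\le\dim(G)$, and I verify that $U=\{u_1,u_2,\dots,u_m\}$ is a resolving set of $G$: each $w_{i,j}\in V(H)$ is at distance $1$ from $u_i$ and $u_j$ and at distance $2$ from every other $u_t$, so the representation of $w_{i,j}$ with respect to $U$ is the characteristic vector of $\{u_i,u_j\}$ (read as the set of coordinates equal to $1$); the $u_t$'s themselves have representation $0$ in coordinate $t$ and $2$ in all other coordinates, and these are distinguishable both from each other and from the $w_{i,j}$'s. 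Hence $\dim(G)\le m$ and $\dim_f(G)\le m$. Combining,
\[
\frac{\dim_{k,f}(H)}{\dim_{k,f}(G)}=\frac{\dim_f(H)}{\dim_f(G)}\ge\frac{m(m+1)/4}{m}=\frac{m+1}{4},
\]
which tends to infinity with $m$, proving the corollary for every fixed $k\in\mathbb{Z}^+$.

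The only non-routine step is the verification that $U=\{u_1,\dots,u_m\}$ resolves $G$; this is what underlies the bound $\dim(G)\le m$ from~\cite{broadcast}, so one could just cite that source. Everything else is a direct assembly of Observation~\ref{obs_diam}(b), Observation~\ref{obs_bounds}(a), and Theorem~\ref{thm_frac}(e), and no additional ideas beyond the construction itself are required.
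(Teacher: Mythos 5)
Your proposal is correct and follows exactly the paper's argument: the same construction from~\cite{broadcast}, the same reduction via Observation~\ref{obs_diam}(b) to the fractional metric dimension, the value $\dim_f(H)=\frac{m(m+1)}{4}$ from Theorem~\ref{thm_frac}(e), and the bound $\dim_f(G)\le m$ from the resolving set $\{u_1,\dots,u_m\}$ together with Observation~\ref{obs_bounds}(a), yielding the same ratio $\frac{m+1}{4}$. The only difference is that you spell out the verification that $\{u_1,\dots,u_m\}$ resolves $G$, which the paper delegates to the cited source.
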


It was shown that $\frac{\dim_k(G)}{\dim(G)}$ and $\frac{\dim_{k,f}(G)}{\dim_f(G)}$ can be arbitrarily large (see~\cite{distKdim} and Theorem~\ref{comp_fkdim}, respectively). In view of Observation~\ref{obs_bounds}, it is easy to see that $\dim(G)-\dim_f(G)$ and $\dim_k(G)-\dim_{k,f}(G)$ can be arbitrarily large. We also show that $\dim(G)-\dim_{k,f}(G)$ and $\dim_{k,f}(G)-\dim(G)$ can be arbitrarily large.

\begin{theorem}\emph{\cite{tree1, tree2, tree3}}\label{dim_tree}
For any tree $T$ that is not a path, $\dim(T)=\sigma(T)-ex(T)$.
\end{theorem}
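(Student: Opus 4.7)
The plan is to establish the equality by proving matching lower and upper bounds on $\dim(T)$ in terms of the structural quantities $\sigma(T)$ and $ex(T)$. Let $M(T)=\{v_1,\dots,v_q\}$ with $q=ex(T)$, and for each $v_i$ let $\ell_{i,1},\dots,\ell_{i,\alpha_i}$ denote its terminal vertices, where $\alpha_i=ter(v_i)\ge 1$. For each $j$, let $P^{i,j}$ be the $v_i$–$\ell_{i,j}$ path and let $s_{i,j}\in N(v_i)$ be the neighbor of $v_i$ on $P^{i,j}$. Observe that $\sigma(T)-ex(T)=\sum_{i=1}^{q}(\alpha_i-1)$, so it suffices to show that any resolving set contributes at least $\alpha_i-1$ vertices to $V(T_{v_i})-\{v_i\}$ and that one can construct a resolving set realizing exactly this many per exterior major vertex.

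For the lower bound, fix a resolving set $W$ and any $v_i\in M(T)$. The key observation is that if two distinct terminal paths $P^{i,j}$ and $P^{i,j'}$ of $v_i$ are both disjoint from $W$, then for any vertex $a$ on $P^{i,j}$ at distance $r$ from $v_i$ and any vertex $b$ on $P^{i,j'}$ at distance $r$ from $v_i$ (picking the minimum of the two path lengths when they differ), one has $d(a,w)=d(b,w)$ for every $w\in W$, because every shortest path from such an $a$ or $b$ to any vertex outside $T_{v_i}$ is forced through $v_i$. Hence at most one of the $\alpha_i$ terminal paths can miss $W$, so $|W\cap (V(T_{v_i})-\{v_i\})|\ge \alpha_i-1$. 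Summing over all exterior major vertices (noting that the sets $V(T_{v_i})-\{v_i\}$ are pairwise disjoint) yields $\dim(T)=|W|\ge \sum_{i=1}^{q}(\alpha_i-1)=\sigma(T)-ex(T)$.

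For the upper bound, let $S=\bigcup_{i=1}^{q}\{\ell_{i,1},\ell_{i,2},\dots,\ell_{i,\alpha_i-1}\}$; then $|S|=\sigma(T)-ex(T)$ and I claim $S$ is a resolving set. Given distinct $x,y\in V(T)$, distinguish cases: (i) $x,y$ lie on the same terminal path $P^{i,j}$, (ii) they lie on different terminal paths emanating from a common $v_i$, (iii) they lie in different "territories" $T_{v_i}$ and $T_{v_{i'}}$, or (iv) one or both lie on the "interior skeleton" consisting of interior degree-two vertices and major vertices of terminal degree zero. In each case the hypothesis that $T$ is not a path ensures $q\ge 1$ and that at least one terminal vertex $\ell_{i,j}\in S$ can be chosen whose distances to $x$ and $y$ differ—essentially because the walk from $\ell_{i,j}$ to $x$ (respectively $y$) passes through $v_i$ and then tracks the distance from $v_i$ to the vertex, and by choosing $\ell_{i,j}$ in the right "direction" one ensures these distances disagree.

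The main obstacle is the case analysis for the upper bound, particularly when $x$ and $y$ lie on the interior skeleton (case iv): one must verify that some $\ell_{i,j}\in S$ resolves them even though neither $x$ nor $y$ is in any $T_{v_i}$. This requires a careful root-and-branch argument: root $T$ at some fixed vertex and note that every non-leaf interior vertex has at least two "downward" directions, one of which can be followed to reach some $v_i$ whose retained leaves $\ell_{i,1},\dots,\ell_{i,\alpha_i-1}$ then witness the desired distance difference. Care is also needed to confirm that vertices of $M_1(T)$ contribute $\alpha_i-1=0$ to the lower bound without violating resolution, which is exactly why the retained set for such $v_i$ is empty in the construction.
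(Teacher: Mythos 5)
The paper does not actually prove this statement -- it is quoted from \cite{tree1,tree2,tree3} -- so your attempt can only be judged on its own merits. Your lower bound is essentially complete and correct: it is the standard argument that if two legs of an exterior major vertex $v_i$ both avoid the candidate resolving set $W$, then the two neighbors $s_{i,j},s_{i,j'}$ of $v_i$ on those legs satisfy $d(s_{i,j},w)=1+d(v_i,w)=d(s_{i,j'},w)$ for every $w\in W$, so $W$ is not resolving; summing $\alpha_i-1$ over the pairwise disjoint sets $V(T_{v_i})-\{v_i\}$ gives $\dim(T)\ge\sigma(T)-ex(T)$. (One small imprecision: since you defined $P^{i,j}$ to contain $v_i$, you should phrase the dichotomy in terms of the legs $V(P^{i,j})-\{v_i\}$, otherwise the case $v_i\in W$ muddies the count. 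Also, you use $\sigma(T)=\sum_i\alpha_i$, which needs the one-line remark that when $T$ is not a path every leaf is a terminal vertex of exactly one exterior major vertex.)

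The genuine gap is the upper bound. You name the correct set $S$ (all leaves except one terminal vertex per exterior major vertex), but the verification that $S$ resolves every pair is only announced, not carried out: you list cases (i)--(iv) and then state that case (iv) ``requires a careful root-and-branch argument'' which you describe in a sentence of intent rather than execute. As written, nothing rules out, say, a pair $x,y$ in which $x$ is the \emph{omitted} terminal vertex $\ell_{i,\alpha_i}$ and $y$ sits on the interior skeleton, or a pair of interior vertices equidistant from everything you retained. The cleanest way to close this is the attachment-point observation: for any $w\in V(T)$, if $p$ is the vertex of the $x$--$y$ path closest to $w$, then $d(x,w)-d(y,w)=d(x,p)-d(y,p)$, so $w$ fails to resolve $x,y$ only if $p$ is the midpoint $m$ of the $x$--$y$ path. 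Hence if $S$ fails, every leaf of $S$ hangs off $m$; but the component of $T-m$ containing $x$ contains some leaf $\ell$, which is a terminal vertex of some exterior major vertex and, being absent from $S$, must be the unique omitted one -- and from there one derives a contradiction using a retained sibling terminal vertex (or, when $ter=1$, the fact that two branches of that major vertex lead to further major vertices and hence to retained leaves). Without some argument of this kind the proof of $\dim(T)\le\sigma(T)-ex(T)$ is not established.
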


\begin{remark}\label{remark_cycle}
Let $k\in\mathbb{Z}^+$, and let $G$ be a non-trivial connected graph. Then
\begin{itemize}
\item[(a)] $\dim(G)-\dim_f(G)$ can be arbitrarily large;
\item[(b)] $\dim_k(G)-\dim_{k,f}(G)$ can be arbitrarily large;
\item[(c)] $\dim(G)-\dim_{k,f}(G)$ can be arbitrarily large;
\item[(d)] $\frac{\dim_{k,f}(G)}{\dim(G)}$ can be arbitrarily large.
\end{itemize}
\end{remark}

\begin{proof}
Let $k\in\mathbb{Z}^+$. For (a), (b) and (c), let $G$ be a tree with $V(G)=M_2(G) \cup L(G)$ such that $M_2(G)=\{v_1, v_2, \ldots, v_x\}$ is the set of exterior major vertices of $G$ with $ter(v_i)=\alpha\ge3$ for each $i\in\{1,2,\ldots, x\}$, where $x\ge1$; see Figure~\ref{fig_tree}. Then $\dim_k(G) \ge \dim(G)=x(\alpha-1)$ by Observation~\ref{obs_bounds}(b) and Theorem~\ref{dim_tree}. Also, note that $\dim_{k,f}(G) \ge \dim_{f}(G)=\frac{x\alpha}{2}$ by Observation~\ref{obs_bounds}(c) and Theorem~\ref{thm_frac}(a). Since a function $g:V(G)\rightarrow[0,1]$ defined by $g(u)=\frac{1}{2}$ for each $u\in L(G)$ and $g(w)=0$ for each $w\in M_2(G)=V(G)-L(G)$ is a $k$-truncated resolving function for $G$ with $g(V(G))=\frac{x\alpha}{2}$, $\dim_{k,f}(G)\le \frac{x\alpha}{2}$. So, $\dim_{k,f}(G) = \dim_{f}(G)=\frac{x\alpha}{2}$. Thus, $\dim_k(G)-\dim_{k,f}(G)\ge \dim(G)-\dim_{k,f}(G)=\dim(G)-\dim_f(G)=x(\alpha-1)-\frac{x\alpha}{2}=\frac{x(\alpha-2)}{2} \rightarrow \infty$ as $x\rightarrow\infty$ or $\alpha\rightarrow\infty$. 

For (d), let $G=C_{x(k+1)(k+5)}$ for some $x\in \mathbb{Z}^+$. Then $\dim_{k,f}(G)=\frac{x(k+5)}{2}$ by Theorem~\ref{fkdim_cycle} and $\dim(G)=2$. So, $\frac{\dim_{k,f}(G)}{\dim(G)}=\frac{x(k+5)}{4}\rightarrow\infty$ as $x\rightarrow\infty$.~\hfill
\end{proof}

\begin{figure}[ht]
\centering
\begin{tikzpicture}[scale=.65, transform shape]

\node [draw, shape=circle, scale=.8] (1) at  (0,0) {};
\node [draw, shape=circle, scale=.8] (2) at  (3,0) {};
\node [draw, shape=circle, scale=.8] (3) at  (6,0) {};
\node [draw, shape=circle, scale=.8] (4) at  (9.5,0) {};

\node [scale=1.2] at (0,0.35) {$v_1$};
\node [scale=1.2] at (3,0.35) {$v_2$};
\node [scale=1.2] at (6,0.35) {$v_3$};
\node [scale=1.2] at (9.5,0.35) {$v_x$};

\node [draw, shape=circle, scale=.8] (a1) at  (-0.8,-1) {};
\node [draw, shape=circle, scale=.8] (a2) at  (0,-1) {};
\node [draw, shape=circle, scale=.8] (a3) at  (1,-1) {};
\node [draw, shape=circle, scale=.8] (b1) at  (2.3,-1) {};
\node [draw, shape=circle, scale=.8] (b2) at  (3,-1) {};
\node [draw, shape=circle, scale=.8] (b3) at  (4,-1) {};
\node [draw, shape=circle, scale=.8] (c1) at  (5.2,-1) {};
\node [draw, shape=circle, scale=.8] (c2) at  (6,-1) {};
\node [draw, shape=circle, scale=.8] (c3) at  (7,-1) {};
\node [draw, shape=circle, scale=.8] (d1) at  (8.7,-1) {};
\node [draw, shape=circle, scale=.8] (d2) at  (9.5,-1) {};
\node [draw, shape=circle, scale=.8] (d3) at  (10.5,-1) {};

\node [scale=1.2] at (-0.7,-1.4) {$\ell_{1,1}$};
\node [scale=1.2] at (0.1,-1.4) {$\ell_{1,2}$};
\node [scale=1.2] at (1.1,-1.4) {$\ell_{1,\alpha}$};
\node [scale=1.2] at (2.4,-1.4) {$\ell_{2,1}$};
\node [scale=1.2] at (3.1,-1.4) {$\ell_{2,2}$};
\node [scale=1.2] at (4.1,-1.4) {$\ell_{2,\alpha}$};
\node [scale=1.2] at (5.3,-1.4) {$\ell_{3,1}$};
\node [scale=1.2] at (6.1,-1.4) {$\ell_{3,2}$};
\node [scale=1.2] at (7.1,-1.4) {$\ell_{3,\alpha}$};
\node [scale=1.2] at (8.8,-1.4) {$\ell_{x,1}$};
\node [scale=1.2] at (9.6,-1.4) {$\ell_{x,2}$};
\node [scale=1.2] at (10.6,-1.4) {$\ell_{x,\alpha}$};

\draw(1)--(2)--(3)--(7,0);\draw[thick, dotted](7,0)--(8.5,0);\draw(8.5, 0)--(4);
\draw(a1)--(1)--(a2);\draw(1)--(a3);\draw[thick, dotted](0.3,-1)--(0.7,-1);
\draw(b1)--(2)--(b2);\draw(2)--(b3);\draw[thick, dotted](3.3,-1)--(3.7,-1);
\draw(c1)--(3)--(c2);\draw(3)--(c3);\draw[thick, dotted](6.3,-1)--(6.7,-1);
\draw(d1)--(4)--(d2);\draw(4)--(d3);\draw[thick, dotted](9.8,-1)--(10.2,-1);

\end{tikzpicture}
\caption{Trees $G$ such that $\dim(G)-\dim_f(G)$, $\dim_k(G)-\dim_{k,f}(G)$ and $\dim(G)-\dim_{k,f}(G)$ can be arbitrarily large.}
\label{fig_tree}
\end{figure}
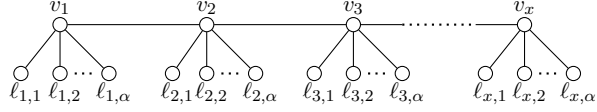

Next, we show the existence of non-isomorphic graphs $G$ and $H$ with $\dim_k(G)=\dim_k(H)$ and $\dim_{k,f}(G)\neq\dim_{k,f}(H)$ for each $k\in\mathbb{Z}^+$. We recall the following result.

\begin{theorem}\emph{\cite{distKdim}}\label{kdim_pathcycle}
Let $k\in\mathbb{Z}^+$. Then 
\begin{itemize}
\item[(a)] $\dim_k(P_n)=1$ for $2\le n \le k+2$;
\item[(b)] $\dim_k(C_n)=2$ for $3\le n \le 3k+3$, and $\dim_k(P_n)=2$ for $k+3 \le n \le 3k+3$;
\item[(c)] for $n \ge 3k+4$, 
$$\dim_{k}(C_n)=\dim_k(P_n)=\left\{
\begin{array}{ll}
\lfloor\frac{2n+3k-1}{3k+2}\rfloor & \mbox{ if } n \equiv 0,1,\ldots, k+2 \pmod{(3k+2)},\\ 
\lfloor\frac{2n+4k-1}{3k+2}\rfloor & \mbox{ if } n \equiv k+3,\ldots,  \lceil \frac{3k+5}{2}\rceil-1 \pmod{(3k+2)},\\ 
\lfloor\frac{2n+3k-1}{3k+2}\rfloor & \mbox{ if } n \equiv \lceil \frac{3k+5}{2}\rceil,\ldots, 3k+1 \pmod{(3k+2)}.
\end{array}\right.$$
\end{itemize}
\end{theorem}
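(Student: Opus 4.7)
The proof splits along the three parts, with most work in (c).

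For part (a), the single endpoint $u_1$ resolves $P_n$ whenever $n\le k+2$: the values $d_k(u_1,u_i)=\min\{i-1,k+1\}=i-1$ for $i=1,\ldots,n$ are all distinct because $n-1\le k+1$. The matching lower bound $\dim_k(P_n)\ge 1$ is trivial.

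For part (b), the lower bound $\dim_k(C_n)\ge 2$ follows from Observation~\ref{obs_bounds}(b) together with $\dim(C_n)=2$, and $\dim_k(P_n)\ge 2$ for $n\ge k+3$ follows from part (a). For the upper bound I would exhibit an explicit two-element resolving set in each case. The key observation is that two vertices $s_1,s_2$ jointly identify any vertex whose truncated distances to both are non-saturated, so one picks $s_1,s_2$ such that $N_k[s_1]\cup N_k[s_2]$ covers $V$ while the relative position of the pair breaks the reflection that would otherwise collapse two distance vectors. Verifying the construction is a short case analysis on the position of a generic vertex relative to $s_1$ and $s_2$.

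For part (c), which is the main task, I would attack the lower and upper bounds in parallel via the arc structure. Let $S\subseteq V(C_n)$ be any $k$-truncated resolving set and label its elements cyclically as $s_1,\ldots,s_m$ with arc lengths $L_1,\ldots,L_m$ summing to $n$. Inside a single arc $[s_i,s_{i+1}]$ of length $L$, the joint truncated-distance vector from the two endpoints resolves all interior vertices when $L\le 2k+3$, since each interior vertex $u_j$ gets a distinct pair of the form $(j,L-j)$, $(j,k+1)$, or $(k+1,L-j)$. Once $L>2k+3$, a plateau of interior vertices with joint vector $(k+1,k+1)$ appears, and those must be distinguished by the next-nearest elements $s_{i-1},s_{i+2}$. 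Quantifying how much additional length this next-nearest support can absorb yields a summed bound of the form $2n\le m(3k+2)+C(k,n\bmod(3k+2))$, which unpacks to the three floor formulas depending on the residue class of $n\bmod(3k+2)$. For the matching upper bound I would construct $S$ by spacing its elements around the cycle (or path) with pairs of nearby vertices in ``blocks'' of span about $3k+2$, then adjust the final block to absorb the residue; direct computation of joint truncated-distance vectors confirms the construction is resolving.

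The main obstacle is the three-case residue split. The middle range $n\equiv k+3,\ldots,\lceil(3k+5)/2\rceil-1\pmod{3k+2}$ is where the bound jumps from $\lfloor\frac{2n+3k-1}{3k+2}\rfloor$ to $\lfloor\frac{2n+4k-1}{3k+2}\rfloor$, reflecting an arc-length configuration that forces one extra resolving vertex while neighboring residue classes fit cleanly. Establishing that the jump is necessary — i.e., that no smaller resolving set exists in this middle range — will require exhibiting the specific unresolvable pair that arises when one tries to economize by one vertex, and verifying that no alternative placement of $S$ circumvents it. The path version of (c) requires a parallel arc analysis with the two free endpoints replacing the wrap-around arcs of the cycle; the same counting yields the identical floor formulas, with the end effects cancelling in each residue class.
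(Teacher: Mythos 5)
First, note that the paper does not prove this statement at all: Theorem~\ref{kdim_pathcycle} is recalled verbatim from~\cite{distKdim}, so there is no internal proof to compare against and your attempt must be judged on its own merits. Parts (a) and (b) of your sketch are essentially fine (though in (b) the bound $\dim_k(P_n)\ge 2$ for $n\ge k+3$ is the converse direction of (a) and needs the one-line argument that no single vertex resolves such a path; this is Theorem~\ref{characterization_mixed}(a) in the present paper).

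In part (c), however, there is a concrete error in the mechanism you propose for the lower bound. You claim that when an arc between consecutive landmarks $s_i,s_{i+1}$ has length $L>2k+3$, the resulting plateau of interior vertices with joint vector $(k+1,k+1)$ ``must be distinguished by the next-nearest elements $s_{i-1},s_{i+2}$.'' This cannot happen: for a vertex $v$ interior to that arc, every shortest path in $C_n$ from $v$ to any landmark passes through $s_i$ or $s_{i+1}$, so $d(v,s_j)\ge\min\{d(v,s_i),d(v,s_{i+1})\}\ge k+2$ for \emph{every} $j$, and such a vertex has the all-$(k+1)$ vector with respect to the entire set. The correct consequence is that at most one vertex in the whole graph may lie at distance at least $k+2$ from both of its nearest landmarks, which only yields gaps of length at most $2k+3$ (one exceptional gap of $2k+4$) and hence a bound of order $n/(2k+3)$ --- too weak to produce the $3k+2$ denominator. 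The constraint that actually drives the formula is a different one, which your sketch never isolates for part (c): two vertices at equal distance $j\le k+1$ from a landmark $s_i$ but lying in the two arcs on either side of it are reflections of each other across $s_i$, agree in truncated distance to $s_i$ and (when the arcs are long enough) to all remote landmarks, and can only be separated by $s_{i-1}$ or $s_{i+1}$; this forces an inequality on the sum $L_{i-1}+L_i$ of \emph{adjacent} arc lengths of the shape $L_{i-1}+L_i\le 3k+2+O(1)$, and summing these $m$ inequalities (each arc counted twice) is what legitimately produces $2n\le m(3k+2)+C$. Beyond this, the genuinely hard content --- determining the exact residue-dependent constant $C$, proving the extra vertex is unavoidable precisely for $n\equiv k+3,\ldots,\lceil\tfrac{3k+5}{2}\rceil-1\pmod{3k+2}$, exhibiting the matching constructions, and transferring the count to $P_n$ with its two end arcs --- is deferred in your write-up (``will require exhibiting the specific unresolvable pair''), so the three-case formula is not actually established.
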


\begin{remark}
Let $k\in\mathbb{Z}^+$. There exist non-isomorphic graphs $G$ and $H$ such that $\dim_{k}(G)=\dim_{k}(H)$ and $\dim_{k,f}(G) \neq \dim_{k,f}(H)$. For $n \ge 3k+4$, $\dim_k(C_n)=\dim_k(P_n)$ by Theorem~\ref{kdim_pathcycle} and $\dim_{k,f}(C_n)\neq\dim_{k,f}(P_n)$ for $n\equiv 1 \pmod{(2k+2)}$ by Theorems~\ref{fkdim_cycle} and~\ref{frac_distK_path}(c).
\end{remark}

We conclude the paper with some open problems.

\begin{question}
For any tree $T$ and for any $k\in{Z}^+$, can we determine $\dim_{k,f}(T)$?
\end{question}

\begin{question}
For any $k\in\mathbb{Z}^+$, can we characterize all connected graphs $G$ satisfying $\dim_{k,f}(G)=\dim_f(G)$?
\end{question}

\begin{question}
For any $k\in\mathbb{Z}^+$, can we characterize all connected graphs $G$ satisfying $\dim_{k}(G)=\dim_{k,f}(G)$?
\end{question}

\end{document}